\newtheorem{thm}{Theorem}
\newtheorem{propo}{Proposition}
\newtheorem{rmq}{Remark}
\newtheorem{lemma}{Lemma}
\newenvironment{proof}[1][Proof]{\noindent\textbf{#1:} }{\hfill$\Box$}
\def\dis{\displaystyle}
\def\Om{\Omega}
\def\om{\omega}
\newcommand{\eps}{\varepsilon}
\newcommand{\Fin}{\hfill$\Box$}
\newcommand{\N}{\mbox{$I \kern -4pt N$}}
\newcommand{\Q}{\mbox{$Q \kern -8pt I$}}
\newcommand{\R}{\mbox{$I \kern -4pt R$}}
\newcommand{\C}{\mbox{$C \kern -8pt I$}}
\newcommand{\mat}[1]{\mbox{\boldmath{$#1$}}}
\newcommand{\yvec}{\mathbf{y}}
\newcommand{\zvec}{\mathbf{z}}
\newcommand{\Fvec}{\mathbf{F}}
\newcommand{\fvec}{\mathbf{f}}
\newcommand{\Wvec}{\mathbf{W}}
\newcommand{\Kvec}{\mathbf{K}}
\newcommand{\vvec}{\mathbf{v}}
\newcommand{\hvec}{\mathbf{h}}
\newcommand{\kvec}{\mathbf{k}}
\newcommand{\ovec}{\mathbf{0}}
\newcommand{\uvec}{\mathbf{u}}
\newcommand{\mvec}{\mathbf{m}}
\newcommand{\xvec}{\mathbf{x}}
\newcommand{\Hvec}{\mathbf{H}}
\newcommand{\Uvec}{\mathbf{U}}
\newcommand{\Vvec}{\mathbf{V}}
\newcommand{\Yvec}{\mathbf{Y}}
\newcommand{\Zvec}{\mathbf{Z}}
\newcommand{\Pvec}{\mathbf{P}}
\newcommand{\Lvec}{\mathbf{L}}
\newcommand{\Avec}{\mathbf{A}}
\newcommand{\nvec}{\mathbf{n}}
\newcommand{\wvec}{\mathbf{w}}
\newcommand{{\resizebox{}{!}{\input .pstex_t}}}[2]{{\resizebox{#1}{!}{\input #2.pstex_t}}}
\def\R{{\bf R}}
\begin{document}

\title{\textbf{Uniform local null control of the Leray-$\alpha$ model}}

\author{
	F\'agner D. \textsc{Araruna}\thanks{Departamento de Matem\'{a}tica, Universidade Federal da Para\'iba, 58051-900, Jo\~{a}o Pessoa--PB, Brasil,
	E-mail: {\tt fagner@mat.ufpb.br}. Partially supported by INCTMat, CAPES and CNPq (Brazil).},\ \
	Enrique \textsc{Fern\'andez-Cara}\thanks{Dpto.\ EDAN, University of Sevilla, Aptdo.~1160, 41080~Sevilla, Spain.
	E-mail: {\tt cara@us.es}. Partially supported by CAPES (Brazil) and grants MTM2006-07932, MTM2010-15592 (DGI-MICINN, Spain).},\\
	Diego A. \textsc{Souza}\thanks{Dpto.\ EDAN, University of Sevilla, 41080~Sevilla, Spain 
	E-mail: {\tt desouza@us.es}. Partially supported by grant MTM2010-15592 (DGI-MICINN, Spain).}}

\date{}

\maketitle

\begin{abstract}
	This paper deals with the distributed and boundary controllability of the so called Leray-$\alpha$ model. This is a regularized variant of the
	Navier-Stokes system ($\alpha$ is a small positive parameter) that can also be viewed as a model for turbulent flows. We prove that the
	Leray-$\alpha$ equations	are locally null controllable, with controls bounded independently of $\alpha$. We also prove that, if the initial
	data are sufficiently small, the controls converge as $\alpha \to 0^+$ to a null control of the Navier-Stokes equations. We also discuss some
	other related questions, such as global null controllability, local and global exact controllability to the trajectories, etc.
\end{abstract}

\noindent
\textbf{Keywords:} Null controllability, Carleman inequalities, Leray-$\alpha$ model, Navier-Stokes equations.
\vskip 0.25cm\par\noindent
\textbf{Mathematics Subject Classification:} 93B05, 35Q35, 35G25, 93B07.

\section{Introduction. The main results}

	Let $\Om\subset \mathbb{R}^N (N=2,3)$ be a bounded connected open set whose boundary $\Gamma$ is of class~$C^2$.
	Let $\om\subset \Om$ be a (small) nonempty open set, let $\gamma\subset \Gamma$ be a (small) nonempty open
	subset of $\Gamma$ and assume that $T>0$. We will use the notation $Q=\Om\times(0,T)$ and
	$\Sigma=\Gamma\times(0,T)$ and we will denote by $\nvec=\nvec(\xvec)$ the outward unit normal to $\Om$ at the points $\xvec\in\Gamma$;
	spaces of $\mathbb{R}^N$-valued functions, as well as their elements, are represented by boldface letters.

	The Navier-Stokes system for a homogeneous viscous incompressible fluid
	(with unit density and unit kinematic viscosity) subject to homogeneous Dirichlet boundary conditions is given by
\begin{equation}\label{NS}
	\left\{
		\begin{array}{lll}
			\yvec_t  - \Delta \yvec +(\yvec\cdot \nabla) \yvec+ \nabla p = \fvec        & \hbox{in} &       Q,     \\
			\nabla \cdot \yvec = 0                                                                             & \hbox{in} &       Q,     \\
			\yvec = \ovec                                                                                          & \hbox{on}&  \Sigma,  \\
			\yvec(0) = \yvec_0                                                                                  & \hbox{in} &  \Om,
		\end{array}
	\right.
\end{equation}
	where $\yvec$ 	(the velocity field) and~$p$ (the pressure) are the unknowns, $\fvec=\fvec(\xvec,t)$ is a forcing
	term and $\yvec_0=\yvec_0(\xvec)$ is a prescribed initial velocity field.
	
	In order to prove the existence of a solution to the Navier-Stokes system, Leray in~\cite{Leray} had the idea of creating a turbulence {\it closure}
	model without enhancing viscous dissipation. Thus, he introduced a ``regularized'' variant of~\eqref{NS} by modifying the nonlinear term as follows:
\[
	\left\{
		\begin{array}{lll}
			\yvec_t  -  \Delta \yvec +(\zvec \cdot \nabla) \yvec+ \nabla p = \fvec      		& \hbox{in} &       Q,     \\
			\nabla \cdot \yvec=0                                  							& \hbox{in} &       Q,
		\end{array}
	\right.
\]
	where $\yvec$ and $\zvec$ are related by
\begin{equation}
	\zvec=\phi_\alpha*\yvec
\end{equation}
	and $\phi_\alpha$ is a smoothing kernel. At least formally, the Navier-Stokes equations are recovered in the limit as
	$\alpha\rightarrow 0^+$, so that $\zvec \rightarrow\yvec$.

	In this paper, we will consider a special smoothing kernel, associated to the Stokes-like operator $\mathbf{Id} + \alpha^2\Avec$,
	where $\Avec$ is the Stokes operator (see Section \ref{Sec2}).
	This leads to the following modification of the Navier-Stokes equations, called  the Leray-$\alpha$ system (see~\cite{HOLM}):
\begin{equation}\label{Lalpha}
	\left\{
		\begin{array}{lll}
			\yvec_t  -  \Delta \yvec +(\zvec \cdot \nabla) \yvec+ \nabla p = \fvec      		& \hbox{in} &       Q,     \\
			\zvec-\alpha^2\Delta \zvec +\nabla \pi=\yvec                                  			& \hbox{in} &       Q,     \\
			\nabla \cdot \yvec=0,~\nabla \cdot \zvec= 0                                    			& \hbox{in} &       Q,     \\
			\yvec = \zvec=\ovec                                                             				& \hbox{on}&  \Sigma,  \\
			\yvec(0) = \yvec_0                                                            					& \hbox{in} &  \Om.
		\end{array}
	\right.
\end{equation}

	In almost all previous works found in the literature, $\Om$ is either the $N$-dimensional torus and the PDE's in~\eqref{Lalpha} are
	completed with periodic boundary conditions or  the whole space $\mathbb{R}^N$. Then, $\zvec$ satisfies an equation of the kind
\begin{equation}\label{Lalpha1}
	\zvec-\alpha^2\Delta \zvec =\yvec
\end{equation}
	and the model is (apparently) slightly different from~\eqref{Lalpha}.
	However, since $\nabla \cdot \yvec= 0$, it is easy to see that \eqref{Lalpha1}, in these cases,  is equivalent to the equation satisfied by $\zvec$
	and~$\pi$ in~\eqref{Lalpha}.
	
	It has been shown in~\cite{HOLM} that, at least for periodic boundary conditions, the numerical solution of the equations in~\eqref{Lalpha}
	matches successfully with empirical data from turbulent channel and pipe flows for a wide range of Reynolds numbers. Accordingly,
	the Leray-$\alpha$ system has become preferable to other turbulence models, since the associated computational cost is
	lower and no introduction of {\it ad~hoc} parameters is required.
	
	In \cite{GIBBON}, the authors have compared the numerical solutions of three different $\alpha$-models useful in turbulence
	modelling (in terms of the Reynolds number associated to a Navier-Stokes velocity field). The results improve as one passes from the
	Navier-Stokes equations to these models and clearly show that the Leray-$\alpha$ system has the best performance. Therefore, it seems
	quite natural to carry out a theoretical analysis of the solutions to~\eqref{Lalpha}.

	We will be concerned with the following controlled systems
\begin{equation}
	\label{CLalpha}
		\left\{
			\begin{array}{lll}
				\yvec_t  -  \Delta \yvec +(\zvec \cdot \nabla) \yvec+ \nabla p = \vvec1_\om  		& \hbox{in} &       Q,     \\
				\zvec-\alpha^2\Delta \zvec +\nabla \pi=\yvec                                               		& \hbox{in} &       Q,    \\
				\nabla \cdot \yvec=0,~\nabla \cdot \zvec= 0                                                 		& \hbox{in} &       Q,     \\
				\yvec = \zvec=\ovec                                                                                     		& \hbox{on}&  \Sigma, \\
				\yvec(0) = \yvec_0                                                                                       		& \hbox{in} &  \Om,
			\end{array}
		\right.
\end{equation}
	and
	\begin{equation}
	\label{BBCLalpha}
		\left\{
			\begin{array}{lll}
				\yvec_t  -  \Delta \yvec +(\zvec \cdot \nabla) \yvec+ \nabla p = \ovec	  		& \hbox{in} &       Q,     \\
				\zvec-\alpha^2\Delta \zvec +\nabla \pi=\yvec                                               		& \hbox{in} &       Q,    \\
				\nabla \cdot \yvec=0,~\nabla \cdot \zvec= 0                                                 		& \hbox{in} &       Q,     \\
				\yvec = \zvec=\hvec1_{\gamma}                                                                 		& \hbox{on}&  \Sigma, \\
				\yvec(0) = \yvec_0                                                                                       		& \hbox{in} &  \Om,
			\end{array}
		\right.
\end{equation}
	where $\vvec = \vvec(\xvec, t)$ (respectively $\hvec=\hvec(\xvec,t)$) stands for the control, assumed to act only in the (small) set $\omega$
	(respectively on $\gamma$) during the whole time interval $(0,T)$. The symbol $1_\om$ (respectively $1_{\gamma}$) stands for the
	characteristic function of~$\om$ (respectively of $\gamma$).
	
	In the applications, the {\it internal control} $\vvec1_\om$ can be viewed as a gravitational or electromagnetic field.
	The {\it boundary control} $\hvec1_\gamma$ is the trace of the velocity field on $\Sigma$.

	\begin{rmq}
		\rm It is completely natural to suppose that $\yvec$ and $\zvec$ satisfy the same boundary conditions on $\Sigma$ since,
		in the limit, we should have $\zvec=\yvec$. Consequently, we will assume that the boundary control $\hvec1_{\gamma}$ acts
		simultaneously on both variables $\yvec$ and $\zvec$.
	\end{rmq}
	
	In what follows, $(\cdot\,,\cdot)$ and $\|\cdot\|$ denote the usual $L^2$ scalar products and norms
	(in~$L^2(\Om)$, $\Lvec^2(\Om)$, $L^2(Q)$, etc.) and $K$, $C$,
	$C_1$, $C_2$, \dots\ denote various positive constants (usually depending on $\om$, $\Om$ and~$T$). Let us recall the
	definitions of some usual spaces in the context of incompressible fluids:
\[
	\begin{array}{c}
		\Hvec = \{\, \uvec\in\Lvec^2(\Om) : \nabla \cdot\uvec = 0~\hbox{in}~ \Om, \ \uvec\cdot\nvec = 0~\hbox{on}~\Gamma \,\}, \\
		\noalign{\smallskip}
		\Vvec = \{\, \uvec\in\Hvec^1_0(\Om) : \nabla\cdot\uvec=0~ \hbox{in}~\Om \,\}.
	\end{array}
\]
	
	Note that, for every $\yvec_0 \in \Hvec$ and every $\vvec \in \Lvec^2(\om \times (0,T))$, there exists a unique solution $(\yvec,p,\zvec,\pi)$ for~\eqref{CLalpha} that satisfies (among other things)
\[
	\yvec, \zvec \in C^0([0,T];\Hvec);
\]
	see~Proposition~\ref{NC-LERAY} below.
	This is in contrast with the lack of uniqueness of the Navier-Stokes system when $N = 3$.
	
	The main goals of this paper are to analyze the controllability properties of~\eqref{CLalpha} and~\eqref{BBCLalpha} and determine the
	way they depend on~$\alpha$ as $\alpha \to 0^+$.

	The null controllability problem for \eqref{CLalpha} at time $T>0$ is the following:
\begin{quote}\it{
	For any $\yvec_0\in \Hvec$, find $\vvec\in \Lvec^2(\om\times(0,T))$ such that the corresponding state~$($the corresponding
	solution to~\eqref{CLalpha}$)$ satisfies
	}
\end{quote}
\begin{equation}\label{null_condition}
	\yvec(T) = \ovec \quad\hbox{\it in} \quad \Om.
\end{equation}

	The null controllability problem for \eqref{BBCLalpha} at time $T>0$ is the following:
\begin{quote}\it{
	For any $\yvec_0\in \Hvec$, find $\hvec\in \Lvec^2(0,T; \Hvec^{-1/2}(\gamma))$
	with $\int_{\gamma} \hvec \cdot \nvec \,d\Gamma =0$ and an associated state $($the corresponding solution to~\eqref{BBCLalpha}$)$ satisfying
	$$
\yvec, \zvec \in C^0([0,T];\Lvec^2(\Om))
	$$
	and~\eqref{null_condition}.
	}
\end{quote}
	
	Recall that, in the context of the Navier-Stokes equations, J.-L.~Lions conjectured in~\cite{Lions2} the global distributed and boundary
	approximate controllability; since then, the controllability of these equations has been intensively studied, but for the moment only partial
	results are known.
	
	Thus, the global approximate controllability of the two-dimensional Navier-Stokes equations with Navier slip boundary conditions was
	obtained by~Coron in~\cite{CORON}.
	Also, by combining results concerning global and local controllability, the global null controllability for the Navier-Stokes system on a
	two-dimensional manifold without boundary was established in~Coron and~Fursikov~\cite{CoronF}; see also Guerrero {\it et al.} \cite{G-APROX}
	for another global controllability result.
	
	The local exact controllability to bounded trajectories has been obtained by~Fursikov and~Imanuvilov
	~\cite{FURS-IMANU,F-IM2}, Imanuvilov~\cite{IMANU} and~Fern\'an\-dez-Cara~{\it et al.}~\cite{FC-G-P} under various circumstances;
	see~Guerrero~\cite{G} and~Gonz\'alez-Burgos~{\it et al.}~\cite{GBGP} for similar results related to the Boussinesq system.
	Let us also mention~\cite{C-G,Co-Gu,Coron-Lissy,FC-G-P-2}, where analogous results are obtained with a reduced number of scalar controls.
	
	For the (simplified) one-dimensional viscous Burgers model, positive and negative results can be found in~\cite{FC-G,G-G,G-I};
	see also~\cite{E-G-G-P}, where the authors consider the one-dimensional compressible Navier-Stokes system.

	Our first main result in this paper is the following:
	
\begin{thm}\label{NC-LERAY}
	There exists $\epsilon>0~(\hbox{independent of }~\alpha)$ such that, for each $\yvec_0 \in \Hvec$ with $\|\yvec_0\| \leq \epsilon$, there exist
	controls ~$\vvec_\alpha\in L^\infty(0,T;\Lvec^2(\omega))$ such that the associated solutions to \eqref{CLalpha} fulfill ~\eqref{null_condition}. 				 Furthermore, these controls can be found satisfying the estimate
\begin{equation}\label{v-unif}
	\|\vvec_\alpha\|_{L^\infty(\Lvec^2)}\leq C,
\end{equation}
	where $C$ is also independent of $\alpha$.
\end{thm}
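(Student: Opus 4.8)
The standard approach to null controllability of nonlinear parabolic systems is a linearization-plus-fixed-point argument combined with a global Carleman inequality for the adjoint system. I would proceed in three stages. First, I would study the linearized control problem obtained by freezing the transported velocity: given a fixed $\mat{a}$ with suitable regularity, consider the linear system with nonlinear term $(\mat{a}\cdot\nabla)\yvec$ coupled to the Stokes-like equation $\zvec-\alpha^2\Delta\zvec+\nabla\pi=\yvec$. The crucial observation is that the coupling through $\zvec$ is a smoothing operation: by the second equation, $\zvec$ is obtained from $\yvec$ by applying $(\mathbf{Id}+\alpha^2\mat{A})^{-1}$, an operator that is bounded on $\Hvec$ \emph{uniformly in $\alpha$} and in fact is a contraction. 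This is what allows the final estimates to be independent of $\alpha$, which is the whole point of the theorem.

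Second, I would establish a global Carleman estimate for the adjoint of the linearized system. The adjoint couples a backward Stokes equation for the dual velocity $\ph$ with a dual Stokes-like equation for the variable conjugate to $\zvec$. The key is to choose Fursikov--Imanuvilov weight functions $e^{-s\sigma}$ with the usual blow-up at $t=0,T$ and a weight $\xi$ that degenerates away from $\omega$, and to prove
\begin{equation}\label{plan-carleman}
	\iint_{Q} e^{-2s\sigma}\bigl(|\nabla\ph|^2+\cdots\bigr)\,d\xvec\,dt
	\leq C\iint_{\omega\times(0,T)} e^{-2s\sigma}|\ph|^2\,d\xvec\,dt,
\end{equation}
with the constant $C$ independent of $\alpha$. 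Here the smoothing term $\alpha^2\Delta$ in the dual equation for the $\zvec$-conjugate variable must be absorbed carefully; since it carries a factor $\alpha^2$, it should be treatable as a lower-order perturbation whose contribution is dominated by the principal Carleman terms once $s$ is large, yielding constants that do not deteriorate as $\alpha\to0^+$. This uniform absorption of the $\alpha$-dependent terms is the main obstacle: one must verify that the regularizing coupling never forces $C$ to grow, and this requires exploiting the sign and smoothing structure of $(\mathbf{Id}+\alpha^2\mat{A})^{-1}$ rather than treating $\alpha^2\Delta$ as an arbitrary second-order term.

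Third, from \eqref{plan-carleman} I would deduce, via a standard observability-to-control duality and a weighted optimal-control (penalized functional) argument, the existence of a control $\vvec$ for the linear problem together with weighted $L^2$-estimates on the state and control in which the weights force $\yvec(T)=\ovec$ and give extra space-time regularity. Upgrading the control bound to $L^\infty(0,T;\Lvec^2(\omega))$, as the statement requires, would come from a bootstrap using the parabolic smoothing of the weighted estimates. Finally, I would set up a fixed-point scheme, choosing $\mat{a}=\zvec$ in an appropriate ball of a weighted space and using Kakutani's or Schauder's theorem; the smallness hypothesis $\|\yvec_0\|\le\epsilon$ is what makes the nonlinear term a small perturbation and closes the fixed point. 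Because every estimate along the way is uniform in $\alpha$, the resulting controls satisfy \eqref{v-unif} with $C$ independent of $\alpha$, and $\epsilon$ can likewise be chosen independent of $\alpha$.
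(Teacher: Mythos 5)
Your overall architecture (freeze the drift, solve a linear null control problem via Carleman/observability with cost depending on the drift, then close with Schauder and smallness) is the same as the paper's, and your observation that $(\mathbf{Id}+\alpha^2\Avec)^{-1}$ is a contraction uniformly in $\alpha$ is indeed the structural reason the estimates can be made $\alpha$-independent. But there are two problems. First, your Stage 2 is a misdirection: with the transport velocity $\mat{a}$ frozen, the $\zvec$-equation is slaved to $\yvec$ by a one-way coupling and the target only constrains $\yvec(T)$, so the adjoint system is just the ordinary backward Oseen system --- no ``dual Stokes-like equation for the variable conjugate to $\zvec$'' appears, and no new coupled Carleman inequality with uniform absorption of $\alpha^2\Delta$ is needed. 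The paper simply invokes the known Carleman estimate of Fern\'andez-Cara--Guerrero--Imanuvilov--Puel for the Oseen adjoint with drift $\hvec=\zvec$, giving the control cost $e^{K(1+\|\zvec\|_\infty^2)}\|\yvec_0\|$. Had a genuinely coupled parabolic/elliptic adjoint actually been required, your claim that the $\alpha^2\Delta$ terms ``should be treatable as a lower-order perturbation'' would be a serious unproven assertion, not a routine absorption.

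Second, and this is the genuine gap: your argument never produces the uniform-in-$\alpha$ bound $\|\zvec\|_{\Lvec^\infty(Q)}\leq C$ that the Oseen observability constant requires. Uniform boundedness of the filter on $\Hvec$ does not help here, because its smoothing degenerates as $\alpha\to 0^+$ (e.g.\ $\|\zvec\|_{D(\Avec)}\leq C\alpha^{-2}\|\yvec\|$), so an $\Lvec^\infty$ bound on $\zvec$ must come from $\yvec$ itself lying, uniformly in $\alpha$, in a space embedding into $\Lvec^\infty$ --- the paper uses $L^\infty(0,T;D(\Avec^{\sigma}))$ with $N/4<\sigma<1$, noting $\|\zvec\|_{L^\infty(D(\Avec^{\sigma}))}\leq\|\tilde\yvec\|_{L^\infty(D(\Avec^{\sigma}))}$. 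This forces the initial datum to be regular, whereas the theorem only assumes $\yvec_0\in\Hvec$ small; the paper bridges this with its regularizing Lemma (a Constantin--Foias type argument showing that, uniformly in $\alpha$, there are times $t^*$ where $\|\yvec_\alpha(t^*)\|^2_{D(\Avec)}\leq\phi(\|\yvec_0\|)$ for the uncontrolled flow), after which the fixed point runs in the ball $\|\tilde\yvec\|_{L^\infty(D(\Avec^{\sigma}))}\leq 1$ with compactness from a semigroup estimate $\|\Avec^r e^{-t\Avec}\|\leq C t^{-r}$ and Aubin--Lions. Without this reduction step your fixed point does not close: smallness of $\|\yvec_0\|$ in $\Hvec$ alone leaves the observability constant, and hence the control bound, uncontrolled as $\alpha\to 0^+$.
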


	Our second main result is the analog of Theorem~\ref{NC-LERAY} in the framework of boundary controllability. It is the following:		

\begin{thm}\label{NC-LERAY-BOUNDARY}
	There exists $\delta>0~(\hbox{independent of }~\alpha)$ such that, for each $\yvec_0 \in \Hvec$ with $\|\yvec_0\| \leq \delta$,
	there exist controls $\hvec_\alpha\in L^\infty(0,T;\Hvec^{-1/2}(\gamma))$ with $\int_{\gamma} \hvec_\alpha\cdot \nvec \,d\Gamma =0$
	and associated solutions to~\eqref{BBCLalpha} that fulfill~\eqref{null_condition}. Furthermore, these controls can be found satisfying the estimate
\begin{equation}\label{v-unif-boundary}
	\|\hvec_\alpha\|_{L^\infty(H^{-1/2})}\leq C,
\end{equation}
	where $C$ is also independent of $\alpha$.
	
\end{thm}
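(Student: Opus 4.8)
The plan is to deduce Theorem~\ref{NC-LERAY-BOUNDARY} from the internal controllability result of Theorem~\ref{NC-LERAY} by an extension-of-the-domain argument, the standard device for turning a boundary control into a distributed one. First I would enlarge $\Om$ through the controlled part of the boundary: choose a bounded connected open set $\widetilde{\Om}$ of class $C^2$ with $\overline\Om\subset\widetilde{\Om}$ such that only $\gamma$ becomes interior to $\widetilde{\Om}$, i.e. $\Gamma\setminus\gamma\subset\partial\widetilde{\Om}$, and then fix a nonempty open set $\widetilde{\om}\subset\widetilde{\Om}\setminus\overline\Om$. The idea is that a distributed control supported in $\widetilde{\om}$, which lies entirely outside $\Om$, will drive an auxiliary solution on $\widetilde{\Om}$ to rest, and the trace on $\gamma$ of the restriction of this solution to $\Om$ will provide the boundary control.

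Next I would extend the data and invoke Theorem~\ref{NC-LERAY} on the enlarged domain. Since $\yvec_0\in\Hvec$ satisfies $\yvec_0\cdot\nvec=0$ on the whole of $\Gamma$, its extension by zero $\widetilde{\yvec}_0$ is divergence-free in $\widetilde{\Om}$ (there is no normal jump across $\Gamma$) and satisfies $\widetilde{\yvec}_0\cdot\widetilde{\nvec}=0$ on $\partial\widetilde{\Om}$, hence belongs to $\Hvec(\widetilde{\Om})$, with $\|\widetilde{\yvec}_0\|_{\Lvec^2(\widetilde{\Om})}=\|\yvec_0\|$. Taking $\delta$ equal to the radius $\epsilon$ furnished by Theorem~\ref{NC-LERAY} for $(\widetilde{\Om},\widetilde{\om})$ (which is independent of $\alpha$), I obtain controls $\widetilde{\vvec}_\alpha\in L^\infty(0,T;\Lvec^2(\widetilde{\om}))$, bounded uniformly in $\alpha$, such that the solution $(\widetilde{\yvec}_\alpha,\widetilde{\zvec}_\alpha)$ of the coupled Leray-$\alpha$ system on $\widetilde{\Om}$, with homogeneous Dirichlet conditions on $\partial\widetilde{\Om}$ for both fields, satisfies $\widetilde{\yvec}_\alpha(T)=\ovec$ in $\widetilde{\Om}$.

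I would then restrict. Setting $(\yvec_\alpha,\zvec_\alpha):=(\widetilde{\yvec}_\alpha,\widetilde{\zvec}_\alpha)|_\Om$ and keeping the restricted pressures, the pair satisfies the two PDE's of~\eqref{BBCLalpha} in $Q$ (the control term vanishes there because $\widetilde{\om}\cap\Om=\emptyset$), vanishes on $(\Gamma\setminus\gamma)\times(0,T)$ (both fields vanish on $\partial\widetilde{\Om}\supset\Gamma\setminus\gamma$), and fulfils $\yvec_\alpha(T)=\ovec$, hence~\eqref{null_condition}. On $\gamma$ the fields carry the boundary values $\widetilde{\yvec}_\alpha|_\gamma$ and $\widetilde{\zvec}_\alpha|_\gamma$; setting $\hvec_\alpha:=\widetilde{\yvec}_\alpha|_\gamma$, the zero-flux condition follows from the divergence theorem, since $\nabla\cdot\yvec_\alpha=0$ in $\Om$ and $\yvec_\alpha\cdot\nvec=0$ on $\Gamma\setminus\gamma$ give $\int_\gamma\hvec_\alpha\cdot\nvec\,d\Gamma=\int_\Gamma\yvec_\alpha\cdot\nvec\,d\Gamma=\int_\Om\nabla\cdot\yvec_\alpha\,d\xvec=0$.

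Two points will demand the bulk of the work, and the one I expect to be the main obstacle is the reconciliation of the boundary conditions for the coupled pair: the single datum $\hvec_\alpha$ must serve as the boundary value of both $\yvec$ and $\zvec$ on $\gamma$, whereas the traces $\widetilde{\yvec}_\alpha|_\gamma$ and $\widetilde{\zvec}_\alpha|_\gamma$ do not coincide in general, since $\zvec$ is slaved to $\yvec$ through the Stokes-like operator $\mathbf{Id}+\alpha^2\Avec$. I would address this by applying the extension to the full coupled system and arranging that, inside $\Om$, the auxiliary field solves its elliptic equation with the same trace as $\yvec$ on $\gamma$, which forces one to revisit the controllability argument behind Theorem~\ref{NC-LERAY} for the correspondingly modified coupling rather than invoke it as a pure black box. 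The second point is the trace estimate: one must show $\hvec_\alpha\in L^\infty(0,T;\Hvec^{-1/2}(\gamma))$ with a bound independent of $\alpha$. This I would obtain from the $\alpha$-uniform energy and regularity estimates underlying Theorem~\ref{NC-LERAY}, using that $\gamma$ is interior to $\widetilde{\Om}$ and disjoint from $\widetilde{\om}$, so that $\widetilde{\yvec}_\alpha$ solves there an unforced system and enjoys interior regularity, while the divergence-free constraint guarantees that the relevant trace is well defined in $\Hvec^{-1/2}(\gamma)$; all constants are then inherited, uniformly in $\alpha$, from the bounds on $\widetilde{\vvec}_\alpha$ and $\widetilde{\yvec}_\alpha$.
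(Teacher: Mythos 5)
Your skeleton coincides with the paper's: enlarge $\Om$ across $\gamma$ into $\widetilde\Om$ with $\partial\widetilde\Om\cap\Gamma=\Gamma\setminus\gamma$, control distributively in $\om\subset\widetilde\Om\setminus\overline\Om$, and take the lateral trace on $\gamma$ as the boundary control; and you correctly diagnose why the black-box use of Theorem~\ref{NC-LERAY} on $\widetilde\Om$ fails (the extended filter equation would give $\widetilde\zvec_\alpha|_\gamma\neq\widetilde\yvec_\alpha|_\gamma$, whereas \eqref{BBCLalpha} demands a single trace $\hvec$ for both fields). But your repair --- ``arranging that, inside $\Om$, the auxiliary field solves its elliptic equation with the same trace as $\yvec$'' --- is precisely where the proof lives, and you leave it as a promissory note. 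The paper's actual device is concrete: given the fixed-point iterate $\overline\yvec$ on $\widetilde\Om$, one solves in $\Om$ the corrector problem $\wvec-\alpha^2\Delta\wvec+\nabla\pi=\alpha^2\Delta\overline\yvec$, $\nabla\cdot\wvec=0$, $\wvec=\ovec$ on $\Gamma$, and sets $\tilde\zvec:=\overline\yvec+\tilde\wvec$ with $\tilde\wvec$ the zero extension. Then $\zvec=\overline\yvec|_\Om+\wvec$ satisfies $\zvec-\alpha^2\Delta\zvec+\nabla\pi=\overline\yvec$ in $\Om$ with $\zvec=\overline\yvec$ on $\Gamma$, so at a fixed point ($\overline\yvec=\tilde\yvec$) the traces of $\yvec$ and $\zvec$ coincide by construction; this is the system \eqref{final-bound}, whose solvability is what is proved via Schauder and Theorem~\ref{NC-OSEENN} on $\widetilde\Om$. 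Crucially, Lemma~\ref{interpolation} gives $\|\wvec\|_{D(\Avec^{s/2})}\leq C\|\overline\yvec\|_{\Hvec^{s}(\Om)}$ with $C$ \emph{independent of} $\alpha$ --- the factor $\alpha^2$ on the right-hand side of the corrector problem is exactly what buys the uniformity. Without this quantitative ingredient your revisited fixed-point scheme has no $\alpha$-uniform bounds, and the estimate \eqref{v-unif-boundary} does not follow.

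A second genuine omission is the reduction from $\yvec_0\in\Hvec$ to small smooth data. In the distributed case this is Lemma~\ref{regularity}, internal to the proof of Theorem~\ref{NC-LERAY}; here it cannot be quoted, because the uncontrolled modified system (\eqref{final-bound} with $\tilde\vvec\equiv\ovec$) is \emph{not} the Leray-$\alpha$ system on $\widetilde\Om$: its filter equation lives on $\Om$ only, with the trace coupling $\zvec=\tilde\yvec$ on $\Sigma$. The paper therefore needs the separate Lemma~\ref{regularity_BC}, itself proved by another fixed point (the mapping $\Phi_\alpha$ in $L^4(0,T_0;\widetilde\Vvec)$), producing small times $t^*$ with $\|\tilde\yvec_\alpha(t^*)\|^2_{D(\widetilde\Avec)}\leq\phi(\|\yvec_0\|_{\Vvec})$ and uniformly bounded controls on $(0,T_0)$; your proposal is silent on this step, yet without it the theorem is only proved for regular initial data. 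Your closing trace argument, on the other hand, is sound in spirit and even slightly crude: once the fixed point lies in the unit ball of $L^\infty(0,T;D(\tilde\Avec^{\sigma}))$ with $\sigma>N/4$, the trace on $\gamma$ is uniformly bounded in $\Hvec^{1/2}(\gamma)$, which is stronger than the $\Hvec^{-1/2}(\gamma)$ bound claimed in \eqref{v-unif-boundary}, and the zero-flux condition follows from the divergence theorem exactly as you say.
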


	The proofs rely on suitable fixed-point arguments. The underlying idea has applied to many other nonlinear control problems.
	However, in the present cases, we find two specific difficulties:
	
\begin{itemize}

\item In order to find spaces and fixed-point mappings appropriate for Schauder's Theorem, the initial state $\yvec_0$ must be regular enough.
	Consequently, we have to establish {\it regularizing properties} for~\eqref{CLalpha} and~\eqref{BBCLalpha};
	see~Lemmas~\ref{regularity} and~\ref{regularity_BC} below.
	
\item For the proof of the uniform estimates~\eqref{v-unif} and~\eqref{v-unif-boundary}, careful estimates of the null controls and associated states of some particular linear problems are needed.

\end{itemize}

	We will also prove results concerning the controllability in the limit, as~$\alpha\to 0^+$.
	It will be shown that the null-controls for \eqref{CLalpha} can be chosen in such a way that they converge to null-controls for the
	Navier-Stokes system
\begin{equation}
	\label{C-NS}
		\left\{
			\begin{array}{lll}
				\yvec_t  -  \Delta \yvec +(\yvec \cdot \nabla) \yvec+ \nabla p = \vvec1_\om	& \hbox{in}&       Q,     \\
				\nabla \cdot \yvec=0                                                 						& \hbox{in}&       Q,     \\
				\yvec = \ovec                                                                       	              		& \hbox{on}&  \Sigma, \\
				\yvec(0) = \yvec_0                        				                         		& \hbox{in}&  \Om.
			\end{array}
		\right.
\end{equation}

	Also, it will be seen that the null-controls for \eqref{BBCLalpha} can be chosen such that they converge to boundary
	null-controls for the Navier-Stokes system
\begin{equation}
	\label{BBC-NS}
		\left\{
			\begin{array}{lll}
				\yvec_t  -  \Delta \yvec +(\yvec \cdot \nabla) \yvec+ \nabla p = \ovec		& \hbox{in}&       Q,     \\
				\nabla \cdot \yvec=0                                                 					& \hbox{in}&       Q,     \\
				\yvec = \hvec1_\gamma                                                  	              		& \hbox{on}&  \Sigma, \\
				\yvec(0) = \yvec_0                        				                         		& \hbox{in}&  \Om.
			\end{array}
		\right.
\end{equation}

	More precisely, our third and fourth main results are the following:
	
\begin{thm}\label{CONVERGENCE}
	Let $\epsilon>0$ be furnished by Theorem~$\ref{NC-LERAY}$. Assume that $\yvec_0\in \Hvec$
	and $\|\yvec_0\| \leq \epsilon$, let $\vvec_\alpha$ be a null control for~\eqref{CLalpha} satisfying \eqref{v-unif} and let
	$(\yvec_\alpha,p_\alpha,\zvec_\alpha,\pi_{\alpha})$ be the associated state. Then, at least for a subsequence, one has
\[
	\begin{array}{c}
		\vvec_\alpha\to \vvec\hbox{ weakly-$*$} \hbox{ in } L^\infty(0,T;\Lvec^2(\om)), \\
		\zvec_\alpha \to \yvec \hbox{ and } \yvec_\alpha\to \yvec \hbox{ strongly in } \Lvec^2(Q),
	\end{array}
\]
	as $\alpha\to 0^+$, where $(\yvec,\vvec)$ is, together with some $p$, a state-control pair for~\eqref{C-NS} satisfying~\eqref{null_condition}.
\end{thm}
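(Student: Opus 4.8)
The plan is to argue by compactness: from the uniformly bounded families $\{\vvec_\alpha\}$ and the associated states $\{(\yvec_\alpha,\zvec_\alpha)\}$ I would extract convergent subsequences and pass to the limit in \eqref{CLalpha}. First I would derive \emph{a priori} estimates independent of $\alpha$. Since $\|\vvec_\alpha\|_{L^\infty(\Lvec^2)}\le C$ by \eqref{v-unif} and $\yvec_0$ is fixed, the standard energy estimate for the first equation of \eqref{CLalpha} (testing with $\yvec_\alpha$; the convective term drops because $\nabla\cdot\zvec_\alpha=0$) bounds $\yvec_\alpha$ in $L^\infty(0,T;\Hvec)\cap L^2(0,T;\Vvec)$ uniformly in $\alpha$. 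Applying the Leray projector to the second equation gives $\zvec_\alpha+\alpha^2\Avec\zvec_\alpha=\yvec_\alpha$; testing with $\zvec_\alpha$ then yields $\|\zvec_\alpha\|\le\|\yvec_\alpha\|$ and $\alpha\|\nabla\zvec_\alpha\|\le\|\yvec_\alpha\|$, so $\zvec_\alpha$ inherits the same uniform bounds and, crucially, $\alpha\nabla\zvec_\alpha$ stays bounded in $\Lvec^2(Q)$.

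Next I would show that the two variables share the same limit by proving $\zvec_\alpha-\yvec_\alpha\to\ovec$ strongly in $\Lvec^2(Q)$. From $\yvec_\alpha-\zvec_\alpha=\alpha^2\Avec\zvec_\alpha$ and an integration by parts one gets, for a.e. $t$, $\|\yvec_\alpha-\zvec_\alpha\|^2=\alpha^2(\nabla(\yvec_\alpha-\zvec_\alpha),\nabla\zvec_\alpha)\le\alpha\|\nabla(\yvec_\alpha-\zvec_\alpha)\|\cdot\alpha\|\nabla\zvec_\alpha\|$. Integrating in time and using the uniform $L^2(\Vvec)$ bound on $\yvec_\alpha$ together with the bound on $\alpha\nabla\zvec_\alpha$, the right-hand side is $O(\alpha)$, hence $\to 0$. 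To obtain strong compactness of $\yvec_\alpha$ itself, I would estimate $\partial_t\yvec_\alpha$ in $L^2(0,T;\Vvec')$ from the equation: the convective term $(\zvec_\alpha\cdot\nabla)\yvec_\alpha$ is bounded there because $\zvec_\alpha,\yvec_\alpha$ are bounded in $L^\infty(\Hvec)\cap L^2(\Vvec)$. By the Aubin--Lions lemma, $\{\yvec_\alpha\}$ is relatively compact in $\Lvec^2(Q)$.

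I would then fix a subsequence along which $\vvec_\alpha\rightharpoonup\vvec$ weakly-$*$ in $L^\infty(0,T;\Lvec^2(\om))$, $\yvec_\alpha\rightharpoonup\yvec$ weakly in $L^2(\Vvec)$ and weakly-$*$ in $L^\infty(\Hvec)$, and $\yvec_\alpha\to\yvec$ strongly in $\Lvec^2(Q)$; by the previous step $\zvec_\alpha\to\yvec$ strongly in $\Lvec^2(Q)$ as well. Passing to the limit, all linear terms and the constraints $\nabla\cdot\yvec=0$, $\yvec=\ovec$ on $\Sigma$ survive under weak convergence. For the nonlinearity I would use the weak form $-\iint_Q(\zvec_\alpha\cdot\nabla)\boldsymbol\varphi\cdot\yvec_\alpha$ against solenoidal test functions and combine the strong $\Lvec^2(Q)$ convergence of $\zvec_\alpha$ with the convergence of $\yvec_\alpha$ to identify the limit as $(\yvec\cdot\nabla)\yvec$; the pressure $p$ is recovered by de Rham's theorem. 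Finally, $\yvec(T)=\ovec$ follows from $\yvec_\alpha(T)=\ovec$ and the convergence $\yvec_\alpha\to\yvec$ in $C^0([0,T];\Vvec')$, a consequence of the $L^2(\Vvec)$ and $\partial_t\in L^2(\Vvec')$ bounds.

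The main obstacle is the passage to the limit in the convective term, which genuinely requires strong $\Lvec^2(Q)$ compactness rather than mere weak convergence; this is what makes the uniform bounds and the Aubin--Lions argument indispensable. The second delicate point is the coupling estimate $\zvec_\alpha-\yvec_\alpha\to\ovec$, since it is precisely what forces $\zvec_\alpha$ and $\yvec_\alpha$ to converge to the \emph{same} function $\yvec$, so that the limit system reduces to the genuine Navier--Stokes nonlinearity $(\yvec\cdot\nabla)\yvec$ rather than a residual coupled form.
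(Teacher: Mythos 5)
Your overall route coincides with the paper's proof: uniform bounds coming from \eqref{v-unif} and the estimates \eqref{yalpha-ineqq} of Proposition~\ref{G-W-E-U-L-alpha}, an Aubin--Lions compactness argument giving $\yvec_\alpha\to\yvec$ strongly in $\Lvec^2(Q)$, weak-$*$ extraction of $\vvec$, and passage to the limit in the convective term; your explicit verification that $\zvec_\alpha-\yvec_\alpha\to\ovec$ in $\Lvec^2(Q)$ in fact fills in a step the paper leaves implicit. However, two of your estimates are wrong as written. First, the $O(\alpha)$ claim does not follow from the inequality you state: in $\alpha\|\nabla(\yvec_\alpha-\zvec_\alpha)\|\cdot\alpha\|\nabla\zvec_\alpha\|$, the factor $\alpha\|\nabla(\yvec_\alpha-\zvec_\alpha)\|$ contains $\alpha\|\nabla\zvec_\alpha\|$, which your own bound controls only by $\|\yvec_\alpha\|$, i.e.\ $O(1)$; the resulting piece $\alpha^2\|\nabla\zvec_\alpha\|^2$ of your right-hand side is therefore merely bounded, not $O(\alpha)$. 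The repair is one line and uses the sign you discarded: expand your own identity as
\[
\|\yvec_\alpha-\zvec_\alpha\|^2=\alpha^2(\nabla\yvec_\alpha,\nabla\zvec_\alpha)-\alpha^2\|\nabla\zvec_\alpha\|^2
\leq \alpha\,\|\nabla\yvec_\alpha\|\cdot\bigl(\alpha\|\nabla\zvec_\alpha\|\bigr)
\leq \alpha\,\|\nabla\yvec_\alpha\|\,\|\yvec_\alpha\|,
\]
which upon integration in time is indeed $O(\alpha)$, by the uniform $L^2(0,T;\Vvec)$ and $L^\infty(0,T;\Hvec)$ bounds.

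Second, in dimension $N=3$ the convective term is \emph{not} bounded in $L^2(0,T;\Vvec')$ under the bounds you have: $L^\infty(0,T;\Hvec)\cap L^2(0,T;\Vvec)$ embeds into $L^{8/3}(0,T;\Lvec^4(\Om))$, so $\|(\zvec_\alpha\cdot\nabla)\yvec_\alpha\|_{\Vvec'}\leq \|\zvec_\alpha\|_{\Lvec^4}\|\yvec_\alpha\|_{\Lvec^4}$ lies only in $L^{4/3}(0,T)$; this is exactly why the paper (Proposition~\ref{G-W-E-U-L-alpha}) bounds $(\yvec_\alpha)_t$ in $L^{\sigma_N}(0,T;\Vvec')$ with $\sigma_N=4/3$ when $N=3$ (and $\sigma_N=2$ when $N=2$, where your exponent is fine). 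This costs you nothing structurally: the Aubin--Lions lemma still yields relative compactness in $\Lvec^2(Q)$ with the time derivative bounded only in $L^{4/3}(0,T;\Vvec')$, and your endpoint argument at $t=T$ survives as well, since $\|\yvec_\alpha(t)-\yvec_\alpha(s)\|_{\Vvec'}\leq \|(\yvec_\alpha)_t\|_{L^{4/3}(\Vvec')}\,|t-s|^{1/4}$ gives equicontinuity in $\Vvec'$ (alternatively, pass to the limit in the weak formulation with test functions not vanishing at $t=T$ and use $\yvec_\alpha(T)=\ovec$). With these two corrections your proof is complete and follows the same compactness strategy as the paper's, while being more detailed on the identification of the common limit of $\yvec_\alpha$ and $\zvec_\alpha$ and on the recovery of the pressure.
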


\begin{thm}\label{CONVERGENCE-BOUNDARY}
	Let $\delta>0$ be furnished by Theorem~$\ref{NC-LERAY-BOUNDARY}$. Assume that $\yvec_0\in \Hvec$
	and $\|\yvec_0\| \leq \delta$, let $\hvec_\alpha$ be a null control for~\eqref{BBCLalpha} satisfying \eqref{v-unif-boundary} and let
	$(\yvec_\alpha,p_\alpha,\zvec_\alpha,\pi_{\alpha})$ be the associated state. Then, at least for a subsequence, one has
\[
	\begin{array}{c}
		\hvec_\alpha\to \hvec\hbox{ weakly-$*$} \hbox{ in } L^\infty(0,T;H^{-1/2}(\gamma)), \\
		\zvec_\alpha \to \yvec \hbox{ and } \yvec_\alpha\to \yvec \hbox{ strongly in } \Lvec^2(Q),
	\end{array}
\]
	as $\alpha\to 0^+$, where $(\yvec,\hvec)$ is, together with some $p$, a state-control pair for~\eqref{BBC-NS} satisfying~\eqref{null_condition}.
\end{thm}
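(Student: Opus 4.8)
The plan is to prove Theorem~\ref{CONVERGENCE-BOUNDARY} by a compactness argument, passing to the limit in the family of controlled trajectories $(\yvec_\alpha,p_\alpha,\zvec_\alpha,\pi_\alpha)$ as $\alpha\to 0^+$. The starting point is the uniform bound~\eqref{v-unif-boundary}: since $\|\hvec_\alpha\|_{L^\infty(H^{-1/2})}\leq C$ with $C$ independent of $\alpha$, the Banach--Alaoglu theorem gives a subsequence and a limit $\hvec$ with $\hvec_\alpha\to\hvec$ weakly-$*$ in $L^\infty(0,T;\Hvec^{-1/2}(\gamma))$. The weak-$*$ limit preserves the linear constraint $\int_\gamma\hvec_\alpha\cdot\nvec\,d\Gamma=0$, so $\int_\gamma\hvec\cdot\nvec\,d\Gamma=0$ as well. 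What remains is to show that the corresponding states converge to a solution of the limiting Navier--Stokes problem~\eqref{BBC-NS} and that this limit state inherits the null condition~\eqref{null_condition}.

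First I would establish $\alpha$-uniform a priori estimates for $(\yvec_\alpha,\zvec_\alpha)$. Using the energy structure of~\eqref{BBCLalpha}, testing the first equation with $\yvec_\alpha$ and exploiting the relation $\zvec_\alpha-\alpha^2\Delta\zvec_\alpha+\nabla\pi_\alpha=\yvec_\alpha$ together with the divergence-free and boundary conditions, one obtains bounds for $\yvec_\alpha$ in $L^\infty(0,T;\Lvec^2(\Om))\cap L^2(0,T;\Hvec^1(\Om))$ and for $\zvec_\alpha$ in a comparable space, with $\alpha\|\nabla\zvec_\alpha\|$ controlled as well; the key point is that the convective term $(\zvec_\alpha\cdot\nabla)\yvec_\alpha$ is antisymmetric and does not contribute to the energy balance. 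These bounds must be shown independent of $\alpha$, relying on the smallness of $\|\yvec_0\|\leq\delta$ and the uniform control bound. From the equations I would then extract a uniform estimate on the time derivative $\partial_t\yvec_\alpha$ in a suitable dual space, which is what powers the compactness step.

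Next I would invoke an Aubin--Lions--Simon compactness argument: the uniform bounds on $\yvec_\alpha$ together with the bound on $\partial_t\yvec_\alpha$ yield strong convergence $\yvec_\alpha\to\yvec$ in $\Lvec^2(Q)$ along a subsequence. For $\zvec_\alpha$, the elliptic relation linking $\zvec_\alpha$ to $\yvec_\alpha$ shows that $\zvec_\alpha-\yvec_\alpha\to\ovec$ as $\alpha\to 0^+$ (the difference is $O(\alpha^2)$ in the appropriate norm since $\alpha^2\Delta\zvec_\alpha$ is controlled), so $\zvec_\alpha\to\yvec$ strongly in $\Lvec^2(Q)$ as well. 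These two strong convergences are precisely what is needed to pass to the limit in the nonlinear term: $(\zvec_\alpha\cdot\nabla)\yvec_\alpha\to(\yvec\cdot\nabla)\yvec$ in the sense of distributions, because one factor converges strongly in $\Lvec^2$ and the other weakly in $L^2(0,T;\Hvec^1)$. The linear terms pass to the limit by weak convergence, and the boundary condition $\yvec_\alpha=\hvec_\alpha1_\gamma$ passes to $\yvec=\hvec1_\gamma$ using continuity of the trace together with the weak-$*$ convergence of $\hvec_\alpha$.

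The main obstacle I anticipate is twofold. First, establishing the $\alpha$-uniform estimates in the boundary-control setting is more delicate than in the distributed case: the nonhomogeneous boundary data $\hvec_\alpha1_\gamma$ only lies in $\Hvec^{-1/2}(\gamma)$, so the standard energy method does not apply directly and one must work with a suitable lifting or with the very weak (transposition) formulation, taking care that all constants stay independent of $\alpha$. Second, one must justify the passage to the limit in the null condition $\yvec_\alpha(T)=\ovec$; since $\yvec_\alpha(T)=\ovec$ for every $\alpha$ and we have enough regularity to make $\yvec_\alpha(T)$ meaningful and continuous in time, the strong $\Lvec^2(Q)$ convergence plus the time-derivative bound should give $\yvec(T)=\ovec$ after identifying the trace at $t=T$, but this identification requires care because the limit solution of the Navier--Stokes system may, in dimension $N=3$, only be a weak (Leray--Hopf) solution for which uniqueness and trace continuity are subtle. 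I would handle this by showing that the limit pair $(\yvec,\hvec)$ is a state-control pair in the appropriate weak sense and that $\yvec(T)=\ovec$ holds as an identity in $\Lvec^2(\Om)$, which is exactly the assertion of the theorem.
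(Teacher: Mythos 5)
Your compactness skeleton (weak-$*$ limit of the controls via Banach--Alaoglu, Aubin--Lions for the states, $\zvec_\alpha-\yvec_\alpha\to\ovec$ from the filter equation, limit in the nonlinearity) is the right one, but the step carrying all the weight --- the $\alpha$-uniform a priori estimates --- is asserted on false grounds and then, by your own admission, left open. With boundary data $\yvec_\alpha=\zvec_\alpha=\hvec_\alpha 1_\gamma$ on $\Sigma$, the convective term is \emph{not} antisymmetric: testing the momentum equation with $\yvec_\alpha$ leaves the flux term $\frac{1}{2}\int_\gamma(\zvec_\alpha\cdot\nvec)|\yvec_\alpha|^2\,d\Gamma$, together with trace terms coming from $-\Delta\yvec_\alpha$ and $\nabla p_\alpha$, and none of these is controlled by \eqref{v-unif-boundary}, which only bounds $\hvec_\alpha$ in $L^\infty(0,T;\Hvec^{-1/2}(\gamma))$. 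For data of that low regularity the state is merely a transposition (very weak) solution, and no lifting argument will produce the uniform $L^\infty(0,T;\Lvec^2(\Om))\cap L^2(0,T;\Hvec^1(\Om))$ bounds that your Aubin--Lions step and your nonlinear limit (one factor strong in $\Lvec^2(Q)$, the other weak in $L^2(0,T;\Hvec^1(\Om))$) both require. So the obstacle you flag in your final paragraph is not a technical refinement to be supplied later; as formulated, the estimate step fails, and for completely arbitrary controls bounded only as in \eqref{v-unif-boundary} there is no mechanism to fill it.

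The paper closes this gap structurally rather than analytically, and this is the idea your proposal misses. Its proof of Theorem~\ref{CONVERGENCE-BOUNDARY} is a one-line adaptation of Theorem~\ref{CONVERGENCE} ``in view of the previous uniform estimates'': the controls $\hvec_\alpha$ furnished by Theorem~\ref{NC-LERAY-BOUNDARY} are, by construction, lateral traces $\hvec_\alpha=\tilde\yvec_\alpha|_{\gamma\times(0,T)}$ of solutions of the \emph{extended distributed} problem \eqref{final-bound} in $\widetilde\Om$, where $\tilde\yvec_\alpha$ satisfies homogeneous Dirichlet conditions on $\partial\widetilde\Om$, the distributed controls $\tilde\vvec_\alpha$ are uniformly bounded in $L^\infty(0,T;\Lvec^2(\om))$, and the fixed point lies in the unit ball of $L^\infty(0,T;D(\tilde\Avec^\sigma))$. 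Hence \eqref{tilyy-ineqq} yields $\alpha$-independent bounds for $\tilde\yvec_\alpha$ in $L^2(0,T;D(\tilde\Avec))\cap C^0([0,T];\tilde\Vvec)$ with $(\tilde\yvec_\alpha)_t$ bounded in $L^2(0,T;\tilde\Hvec)$ --- the energy method being legitimate there precisely because the boundary data are homogeneous. The distributed-case argument then runs verbatim in $\widetilde\Om\times(0,T)$: weak limits, Aubin--Lions compactness, strong $\Lvec^2$ convergence, passage to the limit in $(\tilde\zvec_\alpha\cdot\nabla)\tilde\yvec_\alpha$, and $\tilde\yvec(T)=\ovec$ from weak continuity at $t=T$ (your worry about Leray--Hopf trace pathologies evaporates, since $(\tilde\yvec_\alpha)_t$ is uniformly in $L^2(0,T;\tilde\Hvec)$). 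Restricting to $Q$ and using the continuity of the trace operator on the spaces where $\tilde\yvec_\alpha$ is uniformly bounded gives the convergence of the $\hvec_\alpha$. The parts of your proposal that survive unchanged are the weak-$*$ extraction, the persistence of the flux-zero constraint, and the treatment of $\zvec_\alpha$ --- though the difference $\zvec_\alpha-\yvec_\alpha$ is $O(\alpha)$ in $\Hvec^{-1}$ (since only $\alpha\|\zvec_\alpha\|_{\Vvec}$ stays bounded), not $O(\alpha^2)$ as you claim.
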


	The rest of this paper is organized as follows.
	In Section~\ref{Sec2}, we will recall some properties of the Stokes operator and we will prove some results concerning the existence, uniqueness and
	regularity of the solution to~\eqref{Lalpha}.
	Section~\ref{Sec3} deals with the proofs of Theorems~\ref{NC-LERAY} and~\ref{CONVERGENCE}.
	Section~\ref{Sec4} deals with the proofs of Theorems~\ref{NC-LERAY-BOUNDARY} and~\ref{CONVERGENCE-BOUNDARY}.
	Finally, in~Section~\ref{Sec5}, we present some additional comments and open questions.


\section{Preliminaries}\label{Sec2}

	 In this section, we will recall some properties of the Stokes operator. Then, we will prove that the Leray-$\alpha$ system is well-posed. Also, we
	 will recall the Carleman inequalities and null controllability properties of the Oseen system.


\subsection{The Stokes operator}

         Let $\Pvec: \Lvec^2(\Omega) \mapsto \Hvec$ be the orthogonal projector, usually known as the {\it Leray Projector}.
	Recall that $\Pvec$ maps $\Hvec^s(\Om)$ into $\Hvec^s(\Om)\cap \Hvec$ for all $s\geq0$.
	
	We will denote by $\Avec$ the {\it Stokes operator}, i.e.~the self-adjoint operator in $\Hvec$ formally given by $\Avec=-\Pvec\Delta$.
	For any $\uvec\in D(\Avec):=\Vvec\cap\Hvec^2(\Om)$ and any $\wvec\in \Hvec$, the identity $\Avec\uvec=\wvec$ holds if and only if
\[
	(\nabla\uvec,\nabla\vvec)=(\wvec,\vvec) \quad \forall\, \vvec\in\Vvec.
\]

	It is well known that $\Avec : D(\Avec) \mapsto \Hvec$ can be inverted and~its inverse~$\Avec^{-1}$ is self-adjoint, compact and~positive.
	Consequently, there exists a nondecreasing sequence of positive numbers $\lambda_j$ and an associated orthonormal basis of $\Hvec$,
	denoted by $(\wvec_j)_{j=1}^{\infty}$, such that
\[
	\Avec \wvec_j = \lambda_j\wvec_j \quad \forall j \geq 1.
\]

	Accordingly we can introduce the real powers of the Stokes operator.
	Thus, for any $r \in \mathbb{R}$, we set
\[
	D(\Avec^r)= \{\, \uvec\in \Hvec:  \uvec =\sum_{j=1}^{\infty} u_j\wvec_j, \hbox{ with}~\sum_{j=1}^{\infty}\lambda_j^{2r} |u_j|^2< +\infty \,\}
\]
	and
\[
	\Avec^r \uvec =\sum_{j=1}^{\infty}\lambda_j^r u_j\wvec_j, \quad \forall\, \uvec =\sum_{j=1}^{\infty} u_j\wvec_j \in D(\Avec^r).
\]

	Let us present a result concerning the domains of the powers of the Stokes operator.	
\begin{thm}\label{embb}
	Let $r\in \mathbb{R}$ be given, with $-{1\over2}<r< 1$. Then
\[
	\begin{array}{l}
	\noalign{\smallskip}\dis
	D(\Avec^{r/2})= \Hvec^{r}(\Om)\cap \Hvec~\hbox{ whenever }      ~  -\frac{1}{2}<r< \frac{1}{2}, \\
	\noalign{\smallskip}\dis
	D(\Avec^{r/2})= \Hvec^{r}_0(\Om)\cap \Hvec~\hbox{ whenever }  ~  \frac{1}{2}\leq r\leq 1.
	\end{array}
\]
	Moreover, $\uvec\mapsto (\uvec,\Avec^r\uvec)^{1/2}$ is a Hilbertian norm in $D(\Avec^{r/2})$, equivalent to the usual Sobolev $\Hvec^r$-norm.
	In other words, there exist constants $c_1(r),c_2(r)>0$ such that
\[
	\dis c_1(r) \|\uvec\|_{\Hvec^r}\leq(\uvec,\Avec^r\uvec)^{1/2}\leq c_2(r) \|\uvec\|_{\Hvec^r}~~\forall \uvec\in D(\Avec^{r/2}).
\]
\end{thm}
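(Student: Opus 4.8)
The claim that $\uvec\mapsto(\uvec,\Avec^r\uvec)^{1/2}$ is a Hilbertian norm is the easiest part, and I would dispose of it first using the spectral definition alone: for $\uvec=\sum_j u_j\wvec_j\in D(\Avec^{r/2})$ one has $(\uvec,\Avec^r\uvec)=\sum_j\lambda_j^r|u_j|^2=\|\Avec^{r/2}\uvec\|^2$, so this is exactly the graph norm of $\Avec^{r/2}$ and makes $D(\Avec^{r/2})$ a Hilbert space. The entire content of the theorem is therefore the identification of $D(\Avec^{r/2})$ with a Sobolev-type space, together with the equivalence of this spectral norm with the $\Hvec^r$-norm.

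The strategy I would follow is \emph{interpolation between explicit endpoints}. Two of them are immediate: $D(\Avec^0)=\Hvec$ with the $\Lvec^2$-norm (this is $\Hvec^0(\Om)\cap\Hvec$), and $D(\Avec^{1/2})=\Vvec=\Hvec^1_0(\Om)\cap\Hvec$, since $\|\Avec^{1/2}\uvec\|^2=(\uvec,\Avec\uvec)=(\nabla\uvec,\nabla\uvec)$ is the Dirichlet norm, equivalent to the $\Hvec^1$-norm on $\Vvec$ by Poincar\'e's inequality. Because $\Avec$ is self-adjoint and positive, its fractional power spaces interpolate in the usual way: for $0\le\theta\le1$ one has $D(\Avec^{\theta/2})=[\Hvec,\Vvec]_\theta=[D(\Avec^0),D(\Avec^{1/2})]_\theta$ with equivalent norms, where $[\cdot,\cdot]_\theta$ is complex (equivalently, the operator being self-adjoint, real) interpolation. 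Taking $\theta=r$ reduces the positive range $0\le r\le1$ to computing $[\Hvec,\Vvec]_r$.

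I would then identify $[\Hvec,\Vvec]_r=[\Lvec^2(\Om)\cap\Hvec,\Hvec^1_0(\Om)\cap\Hvec]_r$ with the appropriate Sobolev space. This rests on the classical Lions--Magenes formulas $[\Lvec^2(\Om),\Hvec^1_0(\Om)]_r=\Hvec^r(\Om)$ for $0\le r<1/2$ and $=\Hvec^r_0(\Om)$ for $1/2<r\le1$. To transfer them to the divergence-free setting I would exploit the Leray projector $\Pvec$: since $\Pvec$ is bounded on each $\Hvec^s(\Om)$ and maps it into $\Hvec^s(\Om)\cap\Hvec$ (as recalled above), it is a bounded retraction compatible with the whole scale and hence commutes with interpolation; intersecting the Lions--Magenes identities with $\Hvec$ then yields $D(\Avec^{r/2})=\Hvec^r(\Om)\cap\Hvec$ for $0\le r<1/2$ and $=\Hvec^r_0(\Om)\cap\Hvec$ for $1/2<r\le1$. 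For the remaining range $-1/2<r<0$ I would argue by duality with $\Hvec$ as pivot: $D(\Avec^{r/2})$ is the dual of $D(\Avec^{-r/2})$, and since $-r\in(0,1/2)$ the previous step identifies the latter with $\Hvec^{-r}(\Om)\cap\Hvec$, so dualizing gives $D(\Avec^{r/2})=\Hvec^{r}(\Om)\cap\Hvec$ (for $r\le0$ this intersection is $\Hvec$ as a set, the real content being equivalence of the spectral norm with the negative-order $\Hvec^r$-norm).

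The main obstacle, where all the care concentrates, is the threshold $r=1/2$ together with the compatibility of the intersection with $\Hvec$. The jump in boundary condition (no trace constraint for $r<1/2$, the vanishing trace defining $\Hvec^r_0$ for $r>1/2$) is precisely the Lions--Magenes phenomenon, and at $r=1/2$ interpolation naturally produces the Lions space $\Hvec^{1/2}_{00}(\Om)$, which must be reconciled with the statement; this is handled by checking that $\Pvec$ respects the $\Hvec^{1/2}_{00}$ scale and using that the fields in $D(\Avec^{1/4})$ are divergence-free. Verifying the boundedness and interpolation-compatibility of $\Pvec$ on the full Sobolev scale is the one genuinely technical ingredient, while everything else is bookkeeping with the spectral decomposition.
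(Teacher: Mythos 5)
Your overall architecture coincides with the paper's: the paper disposes of this theorem by citing Fujita--Morimoto \cite{FU-MORI} and remarking that, by Lions--Peetre ($K$-method) interpolation, $D(\Avec^{r/2})=D((-\Delta)^{r/2})\cap\Hvec$, after which the explicit (Lions--Magenes) description of $D((-\Delta)^{r/2})$ gives the result. Your graph-norm observation, the endpoint identifications $D(\Avec^0)=\Hvec$, $D(\Avec^{1/2})=\Vvec$, and the duality step for $-\frac12<r<0$ (which only uses boundedness of $\Pvec$ on $\Hvec^s(\Om)$ for $0\le s<\frac12$, where no boundary condition intervenes) are sound. The genuine gap is in the single step where the whole difficulty lives: the claim that $\Pvec$ is ``a bounded retraction compatible with the whole scale and hence commutes with interpolation''. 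The property the paper recalls --- $\Pvec$ maps $\Hvec^s(\Om)$ into $\Hvec^s(\Om)\cap\Hvec$ for $s\geq 0$ --- concerns the scale \emph{without} boundary conditions. The retract argument for the couple $(\Lvec^2(\Om),\Hvec^1_0(\Om))$ with subcouple $(\Hvec,\Vvec)$ requires $\Pvec$ to be bounded on \emph{both} endpoints with the correct ranges, and this fails at the upper endpoint: writing $\Pvec\uvec=\uvec-\nabla q$ with $q$ solving the Neumann problem $\Delta q=\nabla\cdot\uvec$, $\partial q/\partial\nvec=\uvec\cdot\nvec$ on $\Gamma$, one sees for $\uvec\in\Hvec^1_0(\Om)$ that the trace of $\Pvec\uvec$ is $-\nabla_{\!\Gamma}q$: the normal component vanishes but the tangential one in general does not, so $\Pvec(\Hvec^1_0(\Om))\not\subset\Hvec^1_0(\Om)$ and $\Pvec$ is not a retraction onto $(\Hvec,\Vvec)$. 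Since interpolation does \emph{not} in general commute with intersection by a closed subspace, your conclusion is unjustified precisely on the range $\frac12\le r\le 1$, the only part of the statement where a boundary condition appears; note also that, as written, even the range $0<r<\frac12$ is affected, because you interpolate the same couple $(\Lvec^2,\Hvec^1_0)$ (there the argument can be rescued by switching to $(\Lvec^2(\Om),\Hvec^1(\Om))$, for which $\Pvec$ \emph{is} a retraction, at the price of proving $[\Hvec,\Vvec]_r=[\Hvec,\Hvec^1(\Om)\cap\Hvec]_r$ --- again a boundary-condition statement requiring proof). The identity $D(\Avec^{r/2})=D((-\Delta)^{r/2})\cap\Hvec$ is the actual content of \cite{FU-MORI}; if the one-line projector argument worked, that paper would be trivial.

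The same defect undermines your treatment of the threshold $r=\frac12$: you rightly note that scalar interpolation produces the Lions space $\Hvec^{1/2}_{00}(\Om)$, which is strictly smaller than $\Hvec^{1/2}_0(\Om)=\Hvec^{1/2}(\Om)$, but the proposed reconciliation (``checking that $\Pvec$ respects the $\Hvec^{1/2}_{00}$ scale and using that the fields are divergence-free'') is an assertion, not a proof: $\Pvec$ preserves the $\Hvec^{1/2}_{00}$ constraint no better than it preserves $\Hvec^1_0$, and for a field in $\Hvec\cap\Hvec^{1/2}(\Om)$ only the normal component enjoys any automatic boundary decay, so the weighted integrability $\mathrm{dist}(\xvec,\Gamma)^{-1/2}\,\uvec\in\Lvec^2(\Om)$ of the tangential part is exactly what must be established or circumvented. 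To repair the proposal, either cite \cite{FU-MORI} for the key identity, as the paper does, or prove it: the inclusion $D(\Avec^{\theta})\subset D((-\Delta)^{\theta})\cap\Hvec$ follows from the Heinz--Kato inequality once Stokes elliptic regularity gives $\|\uvec\|_{\Hvec^2(\Om)}\leq C\|\Avec\uvec\|$ on $D(\Avec)$; the reverse inclusion is the nontrivial half and requires a $K$-functional comparison that genuinely confronts the boundary condition, not a projector shortcut.
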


	The proof of Theorem \ref{embb} can be found in \cite{FU-MORI}.
	Notice that, in view of the interpolation $K$-method of Lions and Peetre, we have $D(\Avec^{r/2})=D((-\Delta)^{r/2})\cap \Hvec$.
	Hence, thanks to an explicit description of $D((-\Delta)^{r/2})$, the stated result holds.

	Now, we are going to recall an important property of the semigroup of contractions $e^{-t\Avec}$ generated by $\Avec$, see~\cite{FUJITA-KATO-1}:
	
\begin{thm} \label{estimatesemigroup}
	For any $r > 0$, there exists $C(r) > 0$ such that
\begin{equation}\label{t-r}
	\|\Avec^r e^{-t\Avec}\|_{\mathcal{L}(\Hvec;\Hvec)}\leq C(r) \, t^{-r}~~\forall \, t>0.
\end{equation}
\end{thm}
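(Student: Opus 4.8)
The plan is to exploit the fact that $\Avec$ is a positive self-adjoint operator with a complete orthonormal system of eigenfunctions, so that both $e^{-t\Avec}$ and $\Avec^r$ can be written explicitly through the spectral decomposition. First I would write, for $\uvec=\sum_{j=1}^{\infty} u_j\wvec_j\in\Hvec$,
\[
	\Avec^r e^{-t\Avec}\uvec=\sum_{j=1}^{\infty}\lambda_j^r e^{-t\lambda_j}\,u_j\,\wvec_j,
\]
which is legitimate because $e^{-t\Avec}$ maps $\Hvec$ into $D(\Avec^r)$ for every $t>0$ (the factors $e^{-t\lambda_j}$ decay faster than any power of $\lambda_j$, so the defining series converges in $\Hvec$). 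Since $(\wvec_j)$ is orthonormal, Parseval's identity gives
\[
	\|\Avec^r e^{-t\Avec}\uvec\|^2=\sum_{j=1}^{\infty}\lambda_j^{2r}e^{-2t\lambda_j}|u_j|^2
	\leq\Big(\sup_{j\geq1}\lambda_j^r e^{-t\lambda_j}\Big)^2\sum_{j=1}^{\infty}|u_j|^2,
\]
so that the operator norm $\|\Avec^r e^{-t\Avec}\|_{\mathcal{L}(\Hvec;\Hvec)}$ is controlled by $\sup_{j\geq1}\lambda_j^r e^{-t\lambda_j}$.

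Next I would reduce the problem to a one-variable maximization. Since all the eigenvalues $\lambda_j$ are positive, it suffices to bound the scalar function $g(s)=s^r e^{-ts}$ over $s>0$. Differentiating, $g'(s)=s^{r-1}e^{-ts}(r-ts)$, so $g$ attains its maximum at $s=r/t$, with value $g(r/t)=(r/e)^r\,t^{-r}$. Therefore
\[
	\sup_{j\geq1}\lambda_j^r e^{-t\lambda_j}\leq\sup_{s>0}s^r e^{-ts}=\Big(\frac{r}{e}\Big)^r t^{-r},
\]
and the claimed estimate~\eqref{t-r} follows with $C(r)=(r/e)^r$.

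There is no serious obstacle here: the result is a direct consequence of the spectral theorem for $\Avec$ together with an elementary calculus computation. The only points requiring a little care are the justification that $e^{-t\Avec}\uvec\in D(\Avec^r)$ for $t>0$, so that the series defining $\Avec^r e^{-t\Avec}\uvec$ converges in $\Hvec$, and the identification of the operator norm of a diagonal operator with the supremum of its diagonal entries; both follow at once from the orthonormality of $(\wvec_j)$ and the positivity of the eigenvalues $\lambda_j$.
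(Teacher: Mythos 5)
Your proof is correct and follows exactly the same route as the paper: the spectral expansion $\Avec^r e^{-t\Avec}\uvec=\sum_j \lambda_j^r e^{-t\lambda_j}u_j\wvec_j$, Parseval's identity, and the elementary maximization $\sup_{s>0}s^r e^{-ts}=(r/e)^r t^{-r}$, yielding the same constant $C(r)=(r/e)^r$. Your extra remarks justifying that $e^{-t\Avec}\uvec\in D(\Avec^r)$ are a small refinement the paper leaves implicit, but the argument is identical in substance.
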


	In order to prove \eqref{t-r}, it suffices to observe that, for any $ \uvec =\sum_{j=1}^{+\infty} u_j\wvec_j\in \Hvec$, one has
\[
		\Avec^re^{-t\Avec}\uvec=\sum_{j=1}^{+\infty} \lambda_j^re^{-t\lambda_j} u_j\wvec_j.
\]
	Consequently,
\[
		\|\Avec^re^{-t\Avec}\uvec\|^2=\sum_{j=1}^{+\infty}\left|\lambda_j^r e^{-t\lambda_j}u_j\right|^2
		\leq\left(\max\limits_{\lambda\in\mathbb{R}}\lambda^r e^{-t\lambda}\right)^2\|\uvec\|^2
\]
	and, since $\max\limits_{\lambda\in\mathbb{R}}\lambda^r e^{-t\lambda}=(r/e)^r \, t^{-r}$, we get easily \eqref{t-r}.


\subsection{Well-posedness for the Leray-$\alpha$ system}

	Let us see that, for any $\alpha > 0$, under some reasonable conditions on~$\fvec$ and~$\yvec_0$, the Leray-$\alpha$ system \eqref{Lalpha}
	possesses a unique global weak solution.
	Before this, let us introduce $\sigma_N$ given by
\[
	\sigma_N =
	\left\{
		\begin{array}{ll}
			2 \ & \ \text{if $N=2$,} \\
			4/3 \ & \ \text{if $N=3$. }
		\end{array}
	\right.
\]
	Then, we have the following result:
	
\begin{propo}\label{G-W-E-U-L-alpha}
	Assume that $\alpha>0$ is fixed. Then, for any $\fvec\in L^2(0,T;\Hvec^{-1}(\Om))$ and any $\yvec_0\in \Hvec$,
	there exists exactly one solution $(\yvec_\alpha,p_\alpha,\zvec_\alpha,\pi_{\alpha})$ to~\eqref{Lalpha}, with
\begin{equation}\label{space-weak}
	\begin{array}{c}
		\noalign{\smallskip}\dis
		\yvec_\alpha\in L^2(0, T;\Vvec) \cap C^0([0, T];\Hvec),~ (\yvec_\alpha)_t\in L^2(0,T;\Vvec'), \\
		\noalign{\smallskip}\dis
		\zvec_\alpha\in L^2(0,T;D(\Avec^{3/2})) \cap C^0([0, T];D(\Avec)).
	\end{array}
\end{equation}
	Furthermore, the following estimates hold:
\begin{equation}\label{yalpha-ineqq}
	\begin{alignedat}{2}
		\noalign{\smallskip}\dis
		\|\yvec_\alpha\|_{L^2(\Vvec)}+\|\yvec_\alpha\|_{C^0([0,T];\Hvec)}\leq&~CB_0(\yvec_0,\fvec), 							\\
		\noalign{\smallskip}\dis
		\|(\yvec_\alpha)_t\|_{L^{\sigma_N}(\Vvec')}\leq&~CB_0(\yvec_0,\fvec)(1+B_0(\yvec_0,\fvec)), 							\\
		\noalign{\smallskip}\dis
		\|\zvec_\alpha\|^2_{L^\infty(\Hvec)}+2\alpha^2\|\zvec_\alpha\|^2_{L^\infty(\Vvec)}\leq&~CB_0(\yvec_0,\fvec)^2,			 \\
		\noalign{\smallskip}\dis
		2\alpha^2\|\zvec_\alpha\|^2_{L^\infty(\Vvec)}+\alpha^4\|\zvec_\alpha\|^2_{L^\infty(D(\Avec))}\leq&~CB_0(\yvec_0,\fvec)^2.
	\end{alignedat}
\end{equation}
	Here, $C$ is independent of $\alpha$ and we have introduced the notation
\[
	B_0(\yvec_0,\fvec):=\|\yvec_0\| + \|\fvec\|_{L^2(\Hvec^{-1})}.
\]
\end{propo}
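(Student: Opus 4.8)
The plan is to prove existence, uniqueness, the stated regularity, and the uniform-in-$\alpha$ estimates for the Leray-$\alpha$ system \eqref{Lalpha} by a Galerkin method combined with energy estimates. The crucial structural feature to exploit is that the modified nonlinear term $(\zvec\cdot\nabla)\yvec$ is built from the \emph{smoothed} velocity $\zvec$, which solves the elliptic problem $\zvec-\alpha^2\Delta\zvec+\nabla\pi=\yvec$, $\nabla\cdot\zvec=0$. In the language of the Stokes operator this reads $(\mathbf{Id}+\alpha^2\Avec)\zvec=\Pvec\yvec=\yvec$, so that $\zvec=(\mathbf{Id}+\alpha^2\Avec)^{-1}\yvec$. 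The operator $(\mathbf{Id}+\alpha^2\Avec)^{-1}$ is a bounded, self-adjoint, positive contraction on $\Hvec$, and by Theorem~\ref{embb} it maps $\Hvec$ into $D(\Avec)=\Vvec\cap\Hvec^2(\Om)$ with the $\alpha$-dependent gain $\|\zvec\|_{D(\Avec)}\le\alpha^{-2}\|\yvec\|$ and, more importantly for the estimates, $\|\zvec\|\le\|\yvec\|$, $\alpha\|\zvec\|_{\Vvec}\le C\|\yvec\|$. These elliptic bounds are exactly what will later produce the last two lines of \eqref{yalpha-ineqq}.

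First I would set up the Galerkin scheme using the eigenbasis $(\wvec_j)$ of $\Avec$: seek $\yvec^m_\alpha=\sum_{j=1}^m g_j^m(t)\wvec_j$ solving the projected system, with $\zvec^m_\alpha=(\mathbf{Id}+\alpha^2\Avec)^{-1}\yvec^m_\alpha$ computed explicitly on the same basis (it is diagonal, $\zvec^m_\alpha=\sum_j(1+\alpha^2\lambda_j)^{-1}g_j^m\wvec_j$). Local existence of the finite-dimensional ODE system follows from Cauchy--Lipschitz. Then I would derive the basic energy estimate: test the first equation with $\yvec^m_\alpha$. The key cancellation is that the trilinear form vanishes on the diagonal, $((\zvec\cdot\nabla)\yvec,\yvec)=0$, which holds because $\nabla\cdot\zvec=0$ and $\zvec=\ovec$ on $\Gamma$; this is the whole point of the Leray regularization and it removes the nonlinearity from the energy balance just as in Navier--Stokes. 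This yields
\[
	\frac12\frac{d}{dt}\|\yvec^m_\alpha\|^2+\|\nabla\yvec^m_\alpha\|^2=\langle\fvec,\yvec^m_\alpha\rangle,
\]
and after a Young/Gronwall argument one obtains $\|\yvec^m_\alpha\|_{L^\infty(\Hvec)}+\|\yvec^m_\alpha\|_{L^2(\Vvec)}\le CB_0(\yvec_0,\fvec)$ with $C$ independent of $m$ and $\alpha$. This gives the first line of \eqref{yalpha-ineqq} and global existence of the Galerkin approximations. The $\zvec$-estimates then follow by applying the elliptic bounds above to $\yvec^m_\alpha$: testing $(\mathbf{Id}+\alpha^2\Avec)\zvec^m_\alpha=\yvec^m_\alpha$ with $\zvec^m_\alpha$ and with $\Avec\zvec^m_\alpha$ produces the third and fourth lines of \eqref{yalpha-ineqq} directly.

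The main obstacle, and the step requiring the most care, is the bound on the time derivative $(\yvec_\alpha)_t$ in $L^{\sigma_N}(\Vvec')$, since this is where the dimension-dependent exponent $\sigma_N$ enters and where the nonlinear term must be controlled without $\alpha$-dependence. I would estimate $(\yvec_\alpha)_t=\Pvec[\Delta\yvec_\alpha-(\zvec_\alpha\cdot\nabla)\yvec_\alpha+\fvec]$ term by term in $\Vvec'$. The delicate piece is $(\zvec_\alpha\cdot\nabla)\yvec_\alpha$: here I would use the duality pairing, integrate by parts to move the derivative onto the test function, and bound $|((\zvec_\alpha\cdot\nabla)\yvec_\alpha,\ph)|=|((\zvec_\alpha\cdot\nabla)\ph,\yvec_\alpha)|$ by $\|\zvec_\alpha\|_{\Lvec^{p}}\|\yvec_\alpha\|_{\Lvec^{q}}\|\nabla\ph\|$ using Hölder and the appropriate Sobolev embeddings $\Vvec\hookrightarrow\Lvec^{q}(\Om)$ (which hold for all $q<\infty$ when $N=2$ and for $q\le6$ when $N=3$). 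Crucially, $\|\zvec_\alpha\|_{\Lvec^p}\le C\|\zvec_\alpha\|_{\Vvec}\le C\|\yvec_\alpha\|$ by the contraction property, so the $\zvec$-factor is controlled uniformly in $\alpha$. The resulting $t$-integrability of the product of $L^\infty(\Hvec)$ and $L^2(\Vvec)$ norms is precisely what forces the exponent $\sigma_N$ (namely $\sigma_2=2$, $\sigma_3=4/3$), giving the second line of \eqref{yalpha-ineqq}. Finally, I would pass to the limit $m\to\infty$ using the uniform bounds: weak and weak-$*$ compactness give convergence of the linear terms, and the Aubin--Lions lemma (with $\Vvec\hookrightarrow\hookrightarrow\Hvec\hookrightarrow\Vvec'$ and the $(\yvec_\alpha)_t$ bound) gives strong $\Lvec^2(Q)$ convergence sufficient to pass to the limit in the nonlinear term. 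Uniqueness I would obtain by the standard argument: subtract two solutions, test the difference equation with the difference, use the trilinear cancellation and a Gronwall estimate, again relying on $\|\zvec_\alpha\|\le\|\yvec_\alpha\|$ to keep the constants harmless. The pressures $p_\alpha$ and $\pi_\alpha$ are recovered a posteriori via de Rham's theorem once the divergence-free velocities are constructed.
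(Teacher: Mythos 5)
Your overall route for existence and the estimates is sound and is precisely the alternative the paper itself flags in a footnote: a Galerkin scheme (here, cleanly diagonal in the Stokes eigenbasis, with $\zvec^m_\alpha=(\mathbf{Id}+\alpha^2\Avec)^{-1}\yvec^m_\alpha$), the energy identity via the cancellation $((\zvec\cdot\nabla)\yvec,\yvec)=0$, the elliptic tests with $\zvec$ and $\Avec\zvec$ for the last two lines of \eqref{yalpha-ineqq}, and Aubin--Lions compactness to pass to the limit, with the pressures recovered by de Rham. The paper instead proves existence by a Schauder fixed point for the map $\overline\yvec\mapsto\zvec\mapsto\yvec$ through an Oseen linearization, on a ball of $L^2(0,T;\Hvec)$; the two are interchangeable here. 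Your $L^{\sigma_N}(\Vvec')$ bound for $(\yvec_\alpha)_t$ also matches the paper's computation in substance, but one link in your displayed chain is false as written: $\|\zvec_\alpha\|_{\Vvec}\leq C\|\yvec_\alpha\|$ is \emph{not} uniform in $\alpha$ (the resolvent gains derivatives only at a price, e.g.\ $\alpha\|\zvec_\alpha\|_{\Vvec}\leq C\|\yvec_\alpha\|$). The $\alpha$-uniform fact you need, and evidently intend by ``contraction property,'' is that $(\mathbf{Id}+\alpha^2\Avec)^{-1}$ is a contraction in \emph{each} $D(\Avec^r)$-norm, so $\|\zvec_\alpha\|\leq\|\yvec_\alpha\|$ and $\|\zvec_\alpha\|_{\Vvec}\leq\|\yvec_\alpha\|_{\Vvec}$; combined with the interpolation $L^\infty(0,T;\Hvec)\cap L^2(0,T;\Vvec)\hookrightarrow L^{2\sigma_N}(0,T;\Lvec^4(\Om))$ this gives exactly the paper's bound for the nonlinear term and the second line of \eqref{yalpha-ineqq}.

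The genuine gap is in your uniqueness step when $N=3$. Subtracting two solutions and testing with $\uvec=\yvec_\alpha-\yvec'_\alpha$, the cancellation kills $((\zvec_\alpha\cdot\nabla)\uvec,\uvec)$ but not the term $((\mvec\cdot\nabla)\yvec'_\alpha,\uvec)$, where $\mvec=(\mathbf{Id}+\alpha^2\Avec)^{-1}\uvec$. If, as you propose, you rely only on the $\alpha$-uniform contractions $\|\mvec\|\leq\|\uvec\|$, $\|\mvec\|_{\Vvec}\leq\|\uvec\|_{\Vvec}$, the best available 3D bound is $|((\mvec\cdot\nabla)\yvec'_\alpha,\uvec)|\leq\|\mvec\|_{\Lvec^4}\|\nabla\yvec'_\alpha\|\,\|\uvec\|_{\Lvec^4}\leq C\|\nabla\yvec'_\alpha\|\,\|\uvec\|^{1/2}\|\uvec\|_{\Vvec}^{3/2}$, and Young's inequality leaves a Gronwall coefficient $C\|\nabla\yvec'_\alpha\|^4$ that lies only in $L^{1/2}(0,T)$ for a weak solution: the argument does not close. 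This is exactly the weak--weak uniqueness obstruction for the 3D Navier--Stokes system, and no $\alpha$-uniform constant can defeat it --- if it could, you would recover uniqueness of 3D Navier--Stokes weak solutions in the limit. The repair (the paper's proof) is to note that uniqueness is asserted for \emph{fixed} $\alpha$, so $\alpha$-dependent constants are harmless here: use the elliptic gain $\|\mvec\|_{\Lvec^\infty}\leq C\|\mvec\|_{D(\Avec)}\leq C\alpha^{-2}\|\uvec\|$, which yields $\frac12\frac{d}{dt}\|\uvec\|^2+\|\nabla\uvec\|^2\leq C\alpha^{-2}\|\nabla\yvec'_\alpha\|\,\|\uvec\|^2$ with $\|\nabla\yvec'_\alpha\|\in L^2(0,T)\subset L^1(0,T)$, and Gronwall gives $\uvec\equiv\ovec$, hence $\mvec\equiv\ovec$. (In 2D your version does close with uniform constants, as for 2D Navier--Stokes, so the amendment is needed only in the three-dimensional case.)
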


\begin{proof}
	The proof follows classical and rather well known arguments;
	see for instance~\cite{DAUTRAY-JLL,TEMAM}.
	For completeness, they will be recalled.
	
	\textsc{$\bullet$ Existence:} We will reduce the proof to the search of a fixed point of an appropriate mapping $\Lambda_\alpha$.
	\footnote{
	Alternatively, we can prove the existence of a solution by introducing adequate Galerkin approximations and applying
	(classical) compactness arguments.}

	Thus, for each $\overline \yvec\in L^2(0,T;\Hvec)$, let $(\zvec,\pi)$ be the unique solution to
\[
	\left\{
		\begin{array}{lll}
			\zvec-\alpha^2\Delta \zvec +\nabla \pi=\overline\yvec                		& \hbox{in}&       Q,     \\
			\nabla\cdot\zvec=0                                       						& \hbox{in}&       Q,     \\
			\zvec  = \ovec         											&\hbox{on}& \Sigma.
		\end{array}
	\right.
\]
	It is clear that $\zvec \in L^{2}(0,T;D(\Avec))$ and then, thanks to the Sobolev embedding, we have $\zvec \in L^2(0,T;\Lvec^\infty(\Om))$.
	Moreover, the following estimates are satisfied:
\[
	\begin{alignedat}{2}
		\|\zvec\|^2+2\alpha^2\|\zvec\|^2_{L^2(\Vvec)}\leq&~\| \overline\yvec\|^2, \\
		2\alpha^2\|\zvec\|^2_{L^2(\Vvec)}+\alpha^4\|\zvec\|^2_{L^2(D(\Avec))}
		\leq&~\|\overline \yvec\|^2.
	\end{alignedat}
\]
	From this $\zvec$, we can obtain the unique solution $(\yvec,p)$ to the linear system of the Oseen kind
\[
	\left\{
		\begin{array}{lll}
			\yvec_t  - \Delta \yvec +(\zvec \cdot \nabla) \yvec+ \nabla p = \fvec 			  & \hbox{in}&       Q,     \\
			\nabla\cdot\yvec=0                                      						   	  & \hbox{in}&       Q,      \\
			\yvec  = \ovec         											  	  &\hbox{on}& \Sigma,   \\
			\yvec(0)=\yvec_0                                          					         	  & \hbox{in}&       \Om.
		\end{array}
	\right.
\]
	Since $\fvec\in L^{2}(0,T;\Hvec^{-1}(\Om))$ and $\yvec_0\in \Hvec$, it is clear that
\[
	\begin{array}{c}\dis
		\yvec \in L^2(0, T;\Vvec) \cap C^0([0, T];\Hvec), \quad
		\yvec_t\in L^2(0,T;\Vvec')
	\end{array}
\]
	and the following estimates hold:
\begin{equation}\label{y-ineq}
	\begin{array}{c}
		\noalign{\smallskip}\dis
		\|\yvec\|_{C^0([0,T];\Hvec)}+\|\yvec\|_{L^2(\Vvec)} \leq C_1B_0(\yvec_0,\fvec), \\
		\noalign{\smallskip}\dis
		\|\yvec_t\|_{L^2(\Vvec')} \leq C_2(1+\|\zvec\|_{L^2(D(\Avec))})B_0(\yvec_0,\fvec) \leq C_2(1+\alpha^{-2}\|\overline\yvec\|)B_0(\yvec_0,\fvec).
	\end{array}
\end{equation}

	Now, we introduce the Banach space
\[
	\Wvec = \{\wvec\in L^2(0,T; \Vvec) : \wvec_t\in L^2(0,T;\Vvec')\},
\]
	the closed ball
\[
	\Kvec=\{\, \overline \yvec\in L^2(0,T;\Hvec) : \|\overline\yvec\|\leq C_1\sqrt{T}B_0(\yvec_0,\fvec) \,\}
\]
	and the mapping $\tilde\Lambda_\alpha$, with $\tilde\Lambda_\alpha(\overline \yvec)=\yvec$, for all $\overline \yvec \in L^2(0,T;\Hvec)$.
	Obviously $\tilde\Lambda_\alpha$ is well defined and maps continuously the whole space $L^2(0,T;\Hvec)$ into $\Wvec \cap \Kvec$.

	Notice that any bounded set of $\Wvec$ is relatively compact in the space $L^2(0,T;\Hvec)$, in view of the
	classical results of the Aubin-Lions kind, see for ins\-tance~\cite{Simon}.

	Let us denote by $\Lambda_\alpha$ the restriction to~$\Kvec$ of~$\tilde\Lambda_\alpha$. Then, thanks to \eqref{y-ineq}, $\Lambda_\alpha$ maps
	$\Kvec$ into itself. Moreover, it is clear that $\Lambda_\alpha: \Kvec \mapsto \Kvec$ satisfies the hypotheses of Schauder's Theorem.
	Consequently, $\Lambda_\alpha$ possesses at least one fixed point in $\Kvec$.

	This immediately achieves the proof of the existence of a solution satisfying~\eqref{space-weak}.
	
	The estimates $\eqref{yalpha-ineqq}_{{a}}$, $\eqref{yalpha-ineqq}_{{c}}$ and~$\eqref{yalpha-ineqq}_{{d}}$ are obvious.
	On the other hand,
\[
	\begin{array}{l}\dis
		\|(\yvec_\alpha)_t\|_{L^{\sigma_N}(\Vvec')} \leq C \left( \|\fvec\|_{L^2(\Hvec^{-1})} + \|\yvec_\alpha\|_{L^2(\Vvec)}
		+ \|(\zvec_\alpha\cdot\nabla)\yvec_\alpha\|_{L^{\sigma_N}(\Hvec^{-1})} \right) \\
		\dis \phantom{\|(\yvec_\alpha)_t\|_{L^{\sigma_N}(\Vvec')}}\leq C \left( B_0(\yvec_0,\fvec)
		+ \|\zvec_\alpha\|_{L^{s_N}(\Lvec^4)} \|\yvec_\alpha\|_{L^{s_N}(\Lvec^4)} \right) \\
		\dis \phantom{\|(\yvec_\alpha)_t\|_{L^{\sigma_N}(\Vvec')}} \leq C \left[ B_0(\yvec_0,\fvec)
		+ \left( \|\zvec_\alpha\|_{L^{\infty}(\Hvec)} + \|\zvec_\alpha\|_{L^{2}(\Vvec)} \right)
		\left( \|\yvec_\alpha\|_{L^{\infty}(\Hvec)} + \|\yvec_\alpha\|_{L^{2}(\Vvec)} \right) \right] \\
		\dis \phantom{\|(\yvec_\alpha)_t\|_{L^{\sigma_N}(\Vvec')}}\leq C B_0(\yvec_0,\fvec) (1 + B_0(\yvec_0,\fvec)),
	\end{array}
\]
	where $s_N = 2\,\sigma_N$.
	Here, the third inequality is a consequence of the continuous embedding
\[
	L^\infty(0,T;\Hvec) \cap L^2(0,T;\Vvec) \hookrightarrow L^{s_N}(0,T;\Lvec^4(\Omega)).
\]
	This estimate completes the proof of~\eqref{yalpha-ineqq}.

\

	\textsc{$\bullet$ Uniqueness:}
	Let $(\yvec_\alpha,{p}_\alpha,\zvec_\alpha,{\pi}_{\alpha})$ and $(\yvec'_\alpha,{p'}_\alpha,\zvec'_\alpha,{\pi'}_{\alpha})$ be two solutions
	to~\eqref{Lalpha} and let us introduce $\uvec:=\yvec_\alpha-\yvec'_\alpha$, $q=p_\alpha-p'_\alpha$, $\mvec:=\zvec_\alpha-\zvec'_\alpha$ and
	$h=\pi_\alpha-\pi'_\alpha$. Then
\[
	\left\{
		\begin{array}{lll}
			\uvec_t  -  \Delta \uvec +(\zvec_\alpha \cdot \nabla) \uvec+ \nabla q =-(\mvec \cdot \nabla) \yvec'_\alpha        & \hbox{in} &          Q,    \\
			\mvec-\alpha^2\Delta \mvec +\nabla h=\uvec                                  	             							& \hbox{in} &          Q,    \\
			\nabla \cdot \uvec=0,~\nabla \cdot \mvec= 0                                    					              		 	& \hbox{in} &          Q,    \\
			\uvec = \mvec=\ovec                                                             				                                     		& \hbox{on}&  \Sigma,   \\
			\uvec(0) = \ovec                                                            					                               			& \hbox{in} &  \Om.
		\end{array}
	\right.
\]

	Since $\uvec\in L^\infty(0,T;\Hvec)$, we have $\mvec\in  L^\infty(0,T;D(\Avec))$
	(where the estimate of this norm depends on $\alpha$).
	Therefore, we easily deduce from the first equation of the previous system that
\[
	\frac{1}{2}\frac{d}{dt}\|\uvec\|^2 +\|\nabla\uvec\|^2 \leq \|\mvec\|_\infty\|\nabla\yvec'_\alpha\|\|\uvec\|
\]
	for all $t$. Since $\dis \|\mvec\|_\infty\leq C \|\mvec\|_{D(\Avec)}\leq C\alpha^{-2}\|\uvec\|$, we get
\[
	\frac{1}{2}\frac{d}{dt}\|\uvec\|^2 +\|\nabla\uvec\|^2 \leq C\alpha^{-2}\|\nabla\yvec'_\alpha\|\|\uvec\|^2.
\]
	Therefore, in view of Gronwall's Lemma, we see that $\uvec\equiv0$.
	Accor\-dingly, we also have $\mvec\equiv\ovec$ and uniqueness holds.
\end{proof}

\
	
	We are now going to present some results concerning the existence and uniqueness of a strong solution.
	We start with a global result in the two-dimensional case.
	
\begin{propo}\label{L-S-E-U-L-alpha-2D}
	Assume that $N=2$ and $\alpha>0$ is fixed. Then, for any $\fvec\in L^2(0,T;\Lvec^2(\Om))$ and any $\yvec_0\in \Vvec$,
	there exists exactly one solution $(\yvec_\alpha,p_\alpha, \zvec_\alpha,\pi_\alpha)$ to \eqref{Lalpha}, with
\begin{equation}\label{space-strong}
	\begin{array}{c}
		\noalign{\smallskip}\dis
		\yvec_\alpha\in L^2(0, T;D(\Avec)) \cap C^0([0, T];\Vvec),~(\yvec_\alpha)_t\in L^2(0,T;\Hvec), \\
		\noalign{\smallskip}\dis
		\zvec_\alpha\in L^2(0,T;D(\Avec^2)) \cap C^0([0, T];D(\Avec^{3/2})).
	\end{array}
\end{equation}
	Furthermore, the following estimates hold:
\begin{equation}\label{yalpha-strongg1}
	\begin{alignedat}{2}
		\noalign{\smallskip}\dis
		\|(\yvec_\alpha)_t\|+\|\yvec_\alpha\|_{C^0([0,T];\Vvec)}+\|\yvec_\alpha\|_{L^2(D(\Avec))}\leq&~ B_1(\|\yvec_0\|_\Vvec,\|\fvec\|),\\
		\noalign{\smallskip}\dis
		\|\zvec_\alpha\|^2_{C^0([0,T];\Vvec)}+2\alpha^2\|\zvec_\alpha\|^2_{C^0([0,T];D(\Avec))}\leq&~\| \yvec_\alpha\|^2_{C^0([0,T]; \Vvec)},
	\end{alignedat}
\end{equation}
	where we have introduced the notation
\[
	B_1(r,s):=(r+s)\left[1 + (r+s)^2 \right] e^{C(r^2+s^2)^2}.
\]
\end{propo}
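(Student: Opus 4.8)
The plan is to reduce everything to a priori estimates, since the existence and uniqueness of a weak solution in the sense of~\eqref{space-weak} is already guaranteed by Proposition~\ref{G-W-E-U-L-alpha} (indeed $\fvec \in L^2(0,T;\Lvec^2(\Om)) \hookrightarrow L^2(0,T;\Hvec^{-1}(\Om))$ and $\yvec_0 \in \Vvec \hookrightarrow \Hvec$). In particular, uniqueness is immediate, and what remains is to upgrade the regularity to~\eqref{space-strong} and to prove~\eqref{yalpha-strongg1}. I would derive the estimates formally by testing the equations against $\Avec\yvec_\alpha$ and $\Avec\zvec_\alpha$, and make them rigorous through a Galerkin scheme based on the eigenbasis $(\wvec_j)$ of $\Avec$, passing to the limit and invoking the already-known uniqueness to identify the limit with the weak solution. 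For the $\zvec_\alpha$-bound, since $\nabla\cdot\yvec_\alpha=0$ the second equation reads $(\mathbf{Id}+\alpha^2\Avec)\zvec_\alpha=\yvec_\alpha$; taking the scalar product with $\Avec\zvec_\alpha$ and using Young's inequality gives, pointwise in $t$,
\[
	\|\zvec_\alpha\|_\Vvec^2 + 2\alpha^2\|\zvec_\alpha\|_{D(\Avec)}^2 \leq \|\yvec_\alpha\|_\Vvec^2,
\]
which is exactly $\eqref{yalpha-strongg1}_{b}$; in particular $\|\zvec_\alpha\|\leq\|\yvec_\alpha\|$ and $\|\nabla\zvec_\alpha\|\leq\|\nabla\yvec_\alpha\|$, uniformly in $\alpha$. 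These contraction bounds are what will keep all subsequent constants independent of $\alpha$.

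The core is the estimate for $\yvec_\alpha$. Taking the scalar product of the first equation with $\Avec\yvec_\alpha$, the pressure term drops (it is orthogonal to $\Hvec$), the viscous term yields $\|\Avec\yvec_\alpha\|^2$, and the time derivative yields $\tfrac12\frac{d}{dt}\|\nabla\yvec_\alpha\|^2$. For the nonlinear term I would use the two-dimensional Ladyzhenskaya inequality together with $\Hvec^2$-elliptic regularity for the Stokes operator, writing
\[
	|((\zvec_\alpha\cdot\nabla)\yvec_\alpha,\Avec\yvec_\alpha)| \leq \|\zvec_\alpha\|_{\Lvec^4}\|\nabla\yvec_\alpha\|_{\Lvec^4}\|\Avec\yvec_\alpha\| \leq C\|\yvec_\alpha\|^{1/2}\|\nabla\yvec_\alpha\|\,\|\Avec\yvec_\alpha\|^{3/2},
\]
where I used $\|\zvec_\alpha\|_{\Lvec^4}\leq C\|\zvec_\alpha\|^{1/2}\|\nabla\zvec_\alpha\|^{1/2}\leq C\|\yvec_\alpha\|^{1/2}\|\nabla\yvec_\alpha\|^{1/2}$ and $\|\nabla\yvec_\alpha\|_{\Lvec^4}\leq C\|\nabla\yvec_\alpha\|^{1/2}\|\Avec\yvec_\alpha\|^{1/2}$. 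Young's inequality then absorbs a factor $\tfrac12\|\Avec\yvec_\alpha\|^2$ into the left-hand side and, together with $(\fvec,\Avec\yvec_\alpha)\leq\tfrac14\|\Avec\yvec_\alpha\|^2+\|\fvec\|^2$, leaves
\[
	\frac{d}{dt}\|\nabla\yvec_\alpha\|^2 + \|\Avec\yvec_\alpha\|^2 \leq C\|\fvec\|^2 + a(t)\|\nabla\yvec_\alpha\|^2, \qquad a(t):=C\|\yvec_\alpha\|^2\|\nabla\yvec_\alpha\|^2.
\]

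The crucial point is that $a\in L^1(0,T)$ with $\int_0^T a\,dt \leq C\|\yvec_\alpha\|_{L^\infty(\Hvec)}^2\|\yvec_\alpha\|_{L^2(\Vvec)}^2 \leq CB_0(\yvec_0,\fvec)^4$, by the weak bounds $\eqref{yalpha-ineqq}_{a}$. A (linear) Gronwall argument then controls $\|\yvec_\alpha\|_{C^0(\Vvec)}$; integrating back in time bounds $\|\yvec_\alpha\|_{L^2(D(\Avec))}$; and isolating $(\yvec_\alpha)_t = \Pvec\fvec - \Avec\yvec_\alpha - \Pvec((\zvec_\alpha\cdot\nabla)\yvec_\alpha)$ bounds $(\yvec_\alpha)_t$ in $L^2(\Hvec)$. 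Since $B_0(\yvec_0,\fvec)\leq C(\|\yvec_0\|_\Vvec+\|\fvec\|)$, collecting the exponential factor $e^{CB_0^4}$ and the polynomial prefactors reproduces the form of $B_1(\|\yvec_0\|_\Vvec,\|\fvec\|)$. Finally, the relation $\zvec_\alpha=(\mathbf{Id}+\alpha^2\Avec)^{-1}\yvec_\alpha$ gains two derivatives, so $\yvec_\alpha\in L^2(D(\Avec))\cap C^0(\Vvec)$ yields $\zvec_\alpha\in L^2(D(\Avec^2))\cap C^0(D(\Avec^{3/2}))$ (with $\alpha$-dependent constants, which suffices for~\eqref{space-strong}).

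I expect the main obstacle to be closing the nonlinear estimate uniformly in $\alpha$: the naive bound $\|\zvec_\alpha\|_{\Lvec^\infty}\leq C\|\zvec_\alpha\|_{D(\Avec)}\leq C\alpha^{-2}\|\nabla\yvec_\alpha\|$ would destroy $\alpha$-uniformity, so the estimate must be routed through the $\Lvec^4$-norm of $\zvec_\alpha$ and the contraction inequalities of the first step. Moreover, the resulting differential inequality is of Riccati type, and it can be closed only because the a priori $L^\infty(\Hvec)\cap L^2(\Vvec)$ control of $\yvec_\alpha$ makes the coefficient $a(t)$ integrable in time rather than merely bounded.
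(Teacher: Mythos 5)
Your proposal is correct and follows essentially the same route as the paper: testing the momentum equation with $\Avec\yvec_\alpha$ (the paper's $-\Delta\yvec_\alpha$), bounding the nonlinearity by $C\|\yvec_\alpha\|^{1/2}\|\nabla\yvec_\alpha\|\,\|\Avec\yvec_\alpha\|^{3/2}$ via the two-dimensional Ladyzhenskaya interpolation combined with the $\alpha$-uniform contraction bounds $\|\zvec_\alpha\|\leq\|\yvec_\alpha\|$, $\|\zvec_\alpha\|_{\Vvec}\leq\|\yvec_\alpha\|_{\Vvec}$, and closing the Gronwall argument because the weak estimates of Proposition~\ref{G-W-E-U-L-alpha} make the coefficient $\|\yvec_\alpha\|^2\|\nabla\yvec_\alpha\|^2$ integrable with $\int_0^T\leq CB_0^4$, which is exactly the source of the factor $e^{C(r^2+s^2)^2}$ in $B_1$. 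Your additional remarks (Galerkin regularization of the formal estimate, the explicit recovery of $(\yvec_\alpha)_t\in L^2(0,T;\Hvec)$, and the two-derivative gain $\zvec_\alpha=(\mathbf{Id}+\alpha^2\Avec)^{-1}\yvec_\alpha$ with $\alpha$-dependent constants) merely spell out steps the paper leaves implicit.
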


\begin{proof}
	First, thanks to Proposition \ref{G-W-E-U-L-alpha}, we see that there exists a unique weak solution $(\yvec_\alpha,p_\alpha,\zvec_\alpha,\pi_{\alpha})$
	satisfying~\eqref{space-weak}--\eqref{yalpha-ineqq}. In particular, $\zvec_\alpha\in L^2(0,T;\Vvec)$ and we have
\[
\|\zvec_\alpha(t)\| \leq \|\yvec_\alpha(t)\| \quad \text{and} \quad \|\zvec_\alpha(t)\|_{\Vvec} \leq \|\yvec_\alpha(t)\|_{\Vvec}, \quad \forall t\in[0,T].
\]
	
	As usual, we will just check that good estimates can be obtained for $\yvec_\alpha$, $(\yvec_\alpha)_t$ and~$\zvec_\alpha$.
	Thus, we assume that it is possible to multiply by $-\Delta \yvec_\alpha$ the motion equation satisfied by $\yvec_\alpha$.
	Taking into account that $N=2$, we obtain:
\[
	\begin{alignedat}{2}
		\noalign{\smallskip}\dis
		\frac{1}{2}\frac{d}{dt}\|\nabla\yvec_\alpha\|^2+\|\Delta\yvec_\alpha\|^2=&~-(\fvec,\Delta\yvec_\alpha)+((\zvec_\alpha \cdot \nabla)\yvec_\alpha,\Delta\yvec_\alpha)\\
		\noalign{\smallskip}\dis
		\leq&~\|\fvec\|^2+\frac{1}{4}\|\Delta\yvec_\alpha\|^2
		+\|\zvec_\alpha\|^{1/2}\|\zvec_\alpha\|_{\Vvec}^{1/2}\|\yvec_\alpha\|_{\Vvec}^{1/2}\|\Delta\yvec_\alpha\|^{3/2}
\\
		\noalign{\smallskip}\dis
		\leq&~\|\fvec\|^2+\frac{1}{2}\|\Delta\yvec_\alpha\|^2
		+C\|\zvec_\alpha\|^{2}\|\zvec_\alpha\|_{\Vvec}^{2}\|\yvec_\alpha\|_{\Vvec}^{2}.
	\end{alignedat}
\]	
	Therefore,
\[
	\frac{d}{dt}\|\nabla\yvec_\alpha\|^2+\|\Delta\yvec_\alpha\|^2\leq C\left[\|\fvec\|^2+(\|\yvec_\alpha\|^2\|\yvec_\alpha\|^2_{\Vvec})\|\nabla\yvec_\alpha\|^2\right].
\]

	In view of Gronwall's Lemma and the estimates in Proposition \ref{G-W-E-U-L-alpha}, we easily deduce~\eqref{space-strong} and~\eqref{yalpha-strongg1}.
\end{proof}

\

	Notice that, in this two-dimensional case, the strong estimates for $\yvec_\alpha$ in \eqref{yalpha-strongg1} are independent of $\alpha$;
	obviously, we cannot expect the same when $N=3$.
	
	In the three-dimensional case, what we obtain is the following:
	
\begin{propo}\label{G-S-E-U-L-alpha}
	Assume that $N=3$ and $\alpha>0$ is fixed. Then, for any~\,$\fvec\in L^2(0,T;\Lvec^2(\Om))$ and any $\yvec_0\in \Vvec$, there exists
	exactly one solution $(\yvec_\alpha,p_\alpha,\zvec_\alpha,\pi_{\alpha})$ to~\eqref{Lalpha}, with
\[
	\begin{array}{c}
		\noalign{\smallskip}\dis
		\yvec_\alpha\in L^2(0, T;D(\Avec)) \cap C^0([0, T];\Vvec),~(\yvec_\alpha)_t\in L^2(0,T;\Hvec), \\
		\noalign{\smallskip}\dis
		\zvec_\alpha\in L^2(0,T;D(\Avec^2)) \cap C^0([0, T];D(\Avec^{3/2})).
	\end{array}
\]
	Furthermore, the following estimates hold:
\begin{equation}\label{yalpha-strong}
	\begin{alignedat}{2}
		\noalign{\smallskip}\dis
		\|\yvec_\alpha\|_{L^\infty(\Vvec)}+\|\yvec_\alpha\|_{L^2(D(\Avec))}+\|(\yvec_\alpha)_t\|\leq&~B_2(\|\yvec_0\|_\Vvec,\|\fvec\|,\alpha), \\
		\noalign{\smallskip}\dis
		\|\zvec_\alpha\|^2_{L^\infty(\Vvec)}+2\alpha^2\|\zvec_\alpha\|^2_{L^\infty(D(\Avec))}\leq&~\| \yvec_\alpha\|^2_{L^\infty( \Vvec)},
	\end{alignedat}
\end{equation}
	where we have introduced
\[
	B_2(r,s,\alpha):=C(r+s)e^{C\alpha^{-4}(r+s)^2}.
\]
\end{propo}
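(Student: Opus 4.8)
The plan is to argue exactly as in the two-dimensional case (Proposition~\ref{L-S-E-U-L-alpha-2D}), the only---but essential---difference being the way the convective term is handled. First, Proposition~\ref{G-W-E-U-L-alpha} already furnishes a unique weak solution $(\yvec_\alpha,p_\alpha,\zvec_\alpha,\pi_\alpha)$ satisfying \eqref{space-weak}--\eqref{yalpha-ineqq}; in particular, uniqueness in the stronger class is automatic, since any solution in that class is a weak solution. Hence it suffices to establish the a priori estimates \eqref{yalpha-strong} for this solution, the additional regularity following from them; all of this I would justify rigorously through a Galerkin scheme and a passage to the limit. The crucial preliminary remark is that, because $\yvec_\alpha$ is divergence free, the second equation in~\eqref{Lalpha} reads $(\mathbf{Id}+\alpha^2\Avec)\zvec_\alpha=\yvec_\alpha$, whence $\alpha^2\Avec\zvec_\alpha=\yvec_\alpha-\zvec_\alpha$ and, using $\|\zvec_\alpha\|\leq\|\yvec_\alpha\|$,
\[
\|\zvec_\alpha\|_{D(\Avec)}\leq C\alpha^{-2}\|\yvec_\alpha\|.
\]
Since $N=3$, the embedding $D(\Avec)\hookrightarrow\Hvec^2(\Om)\hookrightarrow\Lvec^\infty(\Om)$ then gives the key bound $\|\zvec_\alpha\|_{\Lvec^\infty}\leq C\alpha^{-2}\|\yvec_\alpha\|$.

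Next I would multiply the motion equation for $\yvec_\alpha$ by $-\Delta\yvec_\alpha$ and integrate over $\Om$, obtaining
\[
\frac{1}{2}\frac{d}{dt}\|\nabla\yvec_\alpha\|^2+\|\Delta\yvec_\alpha\|^2 = -(\fvec,\Delta\yvec_\alpha)+((\zvec_\alpha\cdot\nabla)\yvec_\alpha,\Delta\yvec_\alpha).
\]
The linear term is bounded by $\|\fvec\|^2+\tfrac14\|\Delta\yvec_\alpha\|^2$. For the trilinear term, rather than the two-dimensional interpolation used before---which in three dimensions would only yield a local-in-time bound superlinear in $\|\nabla\yvec_\alpha\|^2$---I would exploit the regularization and write
\[
|((\zvec_\alpha\cdot\nabla)\yvec_\alpha,\Delta\yvec_\alpha)|\leq \|\zvec_\alpha\|_{\Lvec^\infty}\|\nabla\yvec_\alpha\|\,\|\Delta\yvec_\alpha\| \leq C\alpha^{-2}\|\yvec_\alpha\|\,\|\nabla\yvec_\alpha\|\,\|\Delta\yvec_\alpha\|,
\]
and then absorb $\|\Delta\yvec_\alpha\|$ by Young's inequality. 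This leads to
\[
\frac{d}{dt}\|\nabla\yvec_\alpha\|^2+\|\Delta\yvec_\alpha\|^2\leq C\|\fvec\|^2+C\alpha^{-4}\|\yvec_\alpha\|^2\,\|\nabla\yvec_\alpha\|^2.
\]

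The point of this last inequality is that it is \emph{linear} in $\|\nabla\yvec_\alpha\|^2$, with a coefficient $C\alpha^{-4}\|\yvec_\alpha\|^2$ whose time integral is controlled, by the weak estimate $\eqref{yalpha-ineqq}_{a}$, through $\int_0^T\|\yvec_\alpha\|^2\,dt\leq CB_0(\yvec_0,\fvec)^2$. Gronwall's Lemma then yields a global bound of the form $\|\yvec_\alpha\|_{L^\infty(\Vvec)}^2+\|\yvec_\alpha\|_{L^2(D(\Avec))}^2\leq C(\|\yvec_0\|_\Vvec^2+\|\fvec\|^2)\,e^{C\alpha^{-4}B_0(\yvec_0,\fvec)^2}$; since $B_0(\yvec_0,\fvec)\leq C(\|\yvec_0\|_\Vvec+\|\fvec\|)$, this is precisely of the announced form $B_2(\|\yvec_0\|_\Vvec,\|\fvec\|,\alpha)$. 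The estimate for $(\yvec_\alpha)_t$ in $L^2(0,T;\Hvec)$ then follows by reading it off the equation, the term $(\zvec_\alpha\cdot\nabla)\yvec_\alpha$ being now bounded in $L^\infty(0,T;\Hvec)$ by $\|\zvec_\alpha\|_{\Lvec^\infty}\|\nabla\yvec_\alpha\|$; finally, the stated regularity and bound for $\zvec_\alpha$ follow at once from $\zvec_\alpha=(\mathbf{Id}+\alpha^2\Avec)^{-1}\yvec_\alpha$ and the two-derivative gain of this solution operator.

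The main obstacle---and the whole reason the three-dimensional estimate, unlike its two-dimensional counterpart in Proposition~\ref{L-S-E-U-L-alpha-2D}, must depend on $\alpha$---is the control of the convective term. In three dimensions the critical Sobolev scaling prevents any $\alpha$-uniform estimate that stays linear in $\|\nabla\yvec_\alpha\|^2$, so the only way to close a \emph{global-in-time} bound is to trade regularity for a power of $\alpha^{-1}$, using that $\zvec_\alpha=(\mathbf{Id}+\alpha^2\Avec)^{-1}\yvec_\alpha$ lives in $D(\Avec)\hookrightarrow\Lvec^\infty(\Om)$ with norm $O(\alpha^{-2}\|\yvec_\alpha\|)$. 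This is exactly what produces the factor $\alpha^{-4}$ in the exponent of $B_2$.
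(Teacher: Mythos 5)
Your proposal is correct and follows essentially the same route as the paper: existence and uniqueness come from the weak theory of Proposition~\ref{G-W-E-U-L-alpha}, the key point is the bound $\|\zvec_\alpha\|_{\Lvec^\infty(Q)}\leq C\alpha^{-2}\left(\|\yvec_0\|+\|\fvec\|_{L^2(\Hvec^{-1})}\right)$ obtained from $\zvec_\alpha=(\mathbf{Id}+\alpha^2\Avec)^{-1}\yvec_\alpha$ and $D(\Avec)\hookrightarrow\Lvec^\infty(\Om)$, and the strong regularity then follows by viewing the $\yvec_\alpha$-equation as an Oseen system with bounded drift --- which the paper dispatches by citing standard parabolic regularity, and which you carry out explicitly by testing with $-\Delta\yvec_\alpha$, applying Young and Gronwall. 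In both arguments the factor $\alpha^{-4}$ in the exponent of $B_2$ comes from squaring this $L^\infty$ bound, so your write-up is simply a more detailed version of the paper's proof, not a different one.
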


\begin{proof}
	Thanks to Proposition~\ref{G-W-E-U-L-alpha}, there exists a unique weak solution $(\yvec_\alpha,p_\alpha,\zvec_\alpha,\pi_{\alpha})$ satisfying~\eqref{space-weak}
	and~\eqref{yalpha-ineqq}.

	In particular, we obtain that $\zvec_\alpha\in \Lvec^\infty(Q)$, with
\[
	\dis \|\zvec_\alpha\|_{\infty}\leq \frac{C}{\alpha^2} \left( \|\yvec_0\|_{\Hvec}+\|\fvec\|_{L^2(\Hvec^{-1})} \right).
\]

	On the other hand, $\yvec_0 \in \Vvec$. Hence, from the usual (parabolic) regularity results for Oseen systems, the
	solution to~\eqref{Lalpha} is more regular, i.e.~$\yvec_\alpha\in L^2(0, T;D(\Avec)) \cap C^0([0, T];\Vvec)$ and $(\yvec_\alpha)_t\in L^2(0,T;\Hvec)$.
	Moreover, $\yvec_\alpha$ verifies the first estimate in~\eqref{yalpha-strong}. This achieves the proof.
\end{proof}

	Let us now provide a result concerning three-dimensional strong solutions corresponding to small data, with estimates independent of $\alpha$:
	
\begin{propo}\label{L-S-E-U-L-alpha-3D}
	Assume that $N=3$.
	There exists $C_0 > 0$ such that, for any~$\alpha > 0$, any~$\fvec\in L^\infty(0,T;\Lvec^2(\Om))$ and any~$\yvec_0\in \Vvec$ with
\begin{equation}\label{MMM}
	M:=\max\left\{\|\nabla\yvec_0\|^2,~\|\fvec\|_{L^\infty(\Lvec^2)}^{2/3}\right\}<\frac{1}{\sqrt{2(1+C_0)T}}\,,
\end{equation}
	the Leray-$\alpha$ system \eqref{Lalpha} possesses a unique solution $(\yvec_\alpha,p_\alpha, \zvec_\alpha,\pi_\alpha)$ satisfying
\[
	\begin{array}{c}
		\noalign{\smallskip}\dis
		\yvec_\alpha\in L^2(0, T;D(\Avec)) \cap C^0([0, T];\Vvec),~(\yvec_\alpha)_t\in L^2(0,T;\Hvec), \\
		\noalign{\smallskip}\dis
		\zvec_\alpha\in L^2(0,T;D(\Avec)) \cap C^0([0, T];\Vvec).
	\end{array}
\]
	Furthermore, in that case, the following estimates hold:
\begin{equation}\label{yalpha-strongg}
	\begin{alignedat}{2}
		\noalign{\smallskip}\dis
		\|\yvec_\alpha\|^2_{ C^0([0, T];\Vvec)}+\|\yvec_\alpha\|^2_{L^2(D(\Avec))} \leq &~B_3(M,T),\\
		\noalign{\smallskip}\dis
		\|\zvec_\alpha\|^2_{ C^0([0, T];\Vvec)}+2\alpha^2\|\zvec_\alpha\|^2_{L^2(D(\Avec))}\leq&~\| \yvec_\alpha\|^2_{L^\infty(\Vvec)},
	\end{alignedat}
\end{equation}
	where we have introduced
\[
	B_3(M,T) := 2\left[M^3+M+C_0T\left(\frac{M}{\sqrt{1-2(1+C_0)M^2T}}\right)^3\right].
\]
\end{propo}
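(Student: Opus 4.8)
The plan is to take for granted the existence, uniqueness and (possibly $\alpha$-dependent) regularity supplied by Proposition~\ref{G-S-E-U-L-alpha}, so that the whole point here is to upgrade the estimates to ones \emph{independent of} $\alpha$ under the smallness hypothesis~\eqref{MMM}. Since Proposition~\ref{G-S-E-U-L-alpha} already gives $\yvec_\alpha\in L^2(0,T;D(\Avec))$ and $(\yvec_\alpha)_t\in L^2(0,T;\Hvec)$, the function $-\Delta\yvec_\alpha$ is an admissible test function and every computation below can be made rigorous (e.g.\ on Galerkin approximations, passing to the limit). The two facts I would lean on throughout are, first, the elliptic relation coming from the second equation of~\eqref{Lalpha}, namely $\|\zvec_\alpha(t)\|_\Vvec\le\|\yvec_\alpha(t)\|_\Vvec$ for a.e.\ $t$ (together with $2\alpha^2\|\zvec_\alpha\|^2_{L^2(D(\Avec))}\le\|\yvec_\alpha\|^2_{L^\infty(\Vvec)}$, which is exactly the second line of~\eqref{yalpha-strongg}), and, second, the three-dimensional Sobolev and interpolation inequalities $\Hvec^1\hookrightarrow\Lvec^6$ and $\|\uvec\|_{\Lvec^3}\le C\|\uvec\|^{1/2}\|\nabla\uvec\|^{1/2}$.

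First I would test the motion equation with $-\Delta\yvec_\alpha$; the pressure drops by incompressibility and one is left with
\[
\frac12\frac{d}{dt}\|\nabla\yvec_\alpha\|^2+\|\Delta\yvec_\alpha\|^2=-(\fvec,\Delta\yvec_\alpha)+((\zvec_\alpha\cdot\nabla)\yvec_\alpha,\Delta\yvec_\alpha).
\]
The decisive point for $\alpha$-uniformity is to bound the trilinear term \emph{without} invoking $\|\zvec_\alpha\|_{\Lvec^\infty}$, which would cost a factor $\alpha^{-2}$. Instead, by H\"older with exponents $6,3,2$,
\[
|((\zvec_\alpha\cdot\nabla)\yvec_\alpha,\Delta\yvec_\alpha)|\le\|\zvec_\alpha\|_{\Lvec^6}\,\|\nabla\yvec_\alpha\|_{\Lvec^3}\,\|\Delta\yvec_\alpha\|\le C\,\|\nabla\yvec_\alpha\|^{3/2}\,\|\Delta\yvec_\alpha\|^{3/2},
\]
where I used $\|\zvec_\alpha\|_{\Lvec^6}\le C\|\nabla\zvec_\alpha\|\le C\|\nabla\yvec_\alpha\|$ and $\|\nabla\yvec_\alpha\|_{\Lvec^3}\le C\|\nabla\yvec_\alpha\|^{1/2}\|\Delta\yvec_\alpha\|^{1/2}$. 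Absorbing the $\|\Delta\yvec_\alpha\|$-factors into the dissipation by Young's inequality then produces a Riccati-type differential inequality of the form
\[
\frac{d}{dt}\|\nabla\yvec_\alpha\|^2+\|\Delta\yvec_\alpha\|^2\le\|\fvec\|^2+C_0\|\nabla\yvec_\alpha\|^6,
\]
with $C_0$ independent of $\alpha$; this is the constant announced in the statement.

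The heart of the matter is a scalar comparison. Setting $\phi(t):=\max\{\|\nabla\yvec_\alpha(t)\|^2,\|\fvec\|_{L^\infty(\Lvec^2)}^{2/3}\}$, I would note that $\phi(0)=M$ exactly, that $\|\fvec(t)\|^2\le\phi(t)^3$ and $\|\nabla\yvec_\alpha(t)\|^6\le\phi(t)^3$, and that $\phi$ is Lipschitz with $\phi'\le(1+C_0)\phi^3$ a.e.\ (at times where the maximum is the constant second argument, $\phi'=0$ and the inequality is trivial). Comparing with the maximal solution of $\psi'=(1+C_0)\psi^3$, $\psi(0)=M$, namely $\psi(t)=M\,(1-2(1+C_0)M^2t)^{-1/2}$, gives $\phi\le\psi$ on $[0,T]$; hypothesis~\eqref{MMM} is precisely the condition $2(1+C_0)M^2T<1$ that prevents $\psi$ from blowing up before $T$. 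Hence $\sup_{[0,T]}\|\nabla\yvec_\alpha\|^2\le\Phi$ with $\Phi=M\,(1-2(1+C_0)M^2T)^{-1/2}$, and integrating the differential inequality over $(0,T)$ controls $\int_0^T\|\Delta\yvec_\alpha\|^2$ by $\|\nabla\yvec_0\|^2+\int_0^T\|\fvec\|^2+C_0\int_0^T\|\nabla\yvec_\alpha\|^6$, the last integral being $\le C_0T\Phi^3$. Collecting the supremum bound and the $L^2(D(\Avec))$ bound, and using~\eqref{MMM} to absorb the lower-order terms, yields the first line of~\eqref{yalpha-strongg} with a constant of the announced form $B_3(M,T)$; the estimate for $\zvec_\alpha$ follows from the elliptic relation recalled above, and the stated regularity $\yvec_\alpha\in C^0([0,T];\Vvec)\cap L^2(0,T;D(\Avec))$ is then read off. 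I expect the main obstacle to be exactly the criticality of the three-dimensional convective term: the energy method closes only because the data are small, so the whole argument rests on arranging the nonlinear estimate so that (i) no negative power of $\alpha$ enters, via $\|\zvec_\alpha\|_\Vvec\le\|\yvec_\alpha\|_\Vvec$, and (ii) the resulting Riccati inequality has blow-up time strictly larger than $T$, which is guaranteed precisely by~\eqref{MMM}.
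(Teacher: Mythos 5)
Your proposal is correct and follows essentially the same route as the paper's proof: the same energy identity from testing the motion equation with $-\Delta\yvec_\alpha$, the same H\"older $(6,3,2)$ splitting of the trilinear term combined with $\|\zvec_\alpha\|_{\Vvec}\le\|\yvec_\alpha\|_{\Vvec}$ to keep every constant independent of $\alpha$, the same Riccati inequality $\frac{d}{dt}\|\nabla\yvec_\alpha\|^2+\frac12\|\Delta\yvec_\alpha\|^2\le\|\fvec\|^2+C_0\|\nabla\yvec_\alpha\|^6$, and the same maximum-function comparison (the paper takes $\psi(t)=\max\{M,\|\nabla\yvec_\alpha(t)\|^2\}$, which plays exactly the role of your $\phi$, with the identical blow-up-time condition encoded in~\eqref{MMM}). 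The only cosmetic difference is your explicit appeal to Proposition~\ref{G-S-E-U-L-alpha} (or Galerkin approximation) to justify the admissibility of the test function, a point the paper leaves implicit.
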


\begin{proof}
	The proof is very similar to the proof of the existence of a local in time strong solution to the Navier-Stokes system;
	see for instance~\cite{CONST-FOIAS,TEMAM}.

	As before, there exists a unique weak solution $(\yvec_\alpha,p_\alpha,\zvec_\alpha,\pi_{\alpha})$ and this solution satisfies~\eqref{space-weak} and~\eqref{yalpha-ineqq}.

	By multiplying by $\Delta\yvec_\alpha$ the motion equation satisfied by $\yvec_\alpha$, we see that
\[
	\begin{alignedat}{2}
		\frac{1}{2}\frac{d}{dt}\|\nabla\yvec_\alpha\|^2
					+\|\Delta\yvec_\alpha\|^2=&~(\fvec,\Delta\yvec_\alpha)-((\zvec_\alpha \cdot \nabla) \yvec_\alpha,\Delta\yvec_\alpha)\\
					\leq&~\frac{1}{2}\|\fvec\|^2+\frac{1}{2}\|\Delta\yvec_\alpha\|^2+\|\zvec_\alpha\|_{\Lvec^6} \|\nabla\yvec_\alpha\|_{\Lvec^3} \|\Delta\yvec_\alpha\|\\
					\leq&~\frac{1}{2}\|\fvec\|^2+\frac{1}{2}\|\Delta\yvec_\alpha\|^2+C\|\zvec_\alpha\|_{\Vvec} \|\yvec_\alpha\|_{\Vvec}^{1/2} \|\Delta\yvec_\alpha\|^{3/2}.
	\end{alignedat}
\]	
	Then,
\begin{equation}\label{norm-6}
	\frac{d}{dt}\|\nabla\yvec_\alpha\|^2+\frac{1}{2}\|\Delta\yvec_\alpha\|^2 \leq \|\fvec\|^2 + C_0 \|\nabla\yvec_\alpha\|^6,
\end{equation}
	for some $C_0>0$.

	Let us see that, under the assumption \eqref{MMM}, we have
\begin{equation}\label{nabla-y}
	\dis \|\nabla\yvec_\alpha\|^2 \leq \frac{M}{\sqrt{1-2(1+C_0)M^2T}}, \quad \forall t\in[0,T].
\end{equation}
	Indeed, let us introduce the real-valued function $\psi$ given by
\[
	\dis \psi(t)= \max\left\{ M,\|\nabla\yvec_\alpha(t)\|^2\right\}, \quad \forall t \in [0,T].
\]
	Then, $\psi$ is almost everywhere differentiable and, in view of \eqref{MMM} and \eqref{norm-6}, one has
\[
	{d\psi \over dt} \leq (1+C_0)\psi^3, \quad \psi(0)=M.
\]
	Therefore,
\[
	\psi(t)\leq \frac{M}{\sqrt{1-2(1+C_0)M^2t}}\leq \frac{M}{\sqrt{1-2(1+C_0)M^2T}}
\]
	and, since $\|\nabla\yvec_\alpha\|^2 \leq \psi$, \eqref{nabla-y} holds.
	From this estimate, it is very easy to deduce \eqref{yalpha-strongg}.
\end{proof}

	The following lemma is inspired by a result by Constantin and~Foias for the Navier-Stokes equations, see~\cite{CONST-FOIAS}:
	
\begin{lemma}\label{regularity}
	There exists a continuous function $\phi:\mathbb{R}_+ \mapsto \mathbb{R}_+$, with $\phi(s)\rightarrow 0$ as $s\rightarrow 0^+$,
	satisfying the following properties:

\begin{itemize}
	\item [a)] For $\fvec=\ovec$, any $\yvec_0\in \Hvec$ and any $\alpha > 0$, there exist arbitrarily small times $t^*\in(0,T/2)$ such
	that the corresponding solution to~\eqref{Lalpha} satisfies $\|\yvec_\alpha(t^*)\|^2_{D(\Avec)}\leq \phi(\|\yvec_0\|)$.
	
	\item [b)] The set of these $t^*$ has positive measure.
\end{itemize}

\end{lemma}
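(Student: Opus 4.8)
The plan is to combine the $\alpha$-uniform energy estimate of Proposition~\ref{G-W-E-U-L-alpha} with the local strong-solution theory of Propositions~\ref{L-S-E-U-L-alpha-2D} and~\ref{L-S-E-U-L-alpha-3D}, and to extract the good instants $t^*$ by two successive mean value (Chebyshev) arguments. First I would record that, since $\fvec=\ovec$, testing the motion equation with $\yvec_\alpha$ and using $((\zvec_\alpha\cdot\nabla)\yvec_\alpha,\yvec_\alpha)=0$ gives the dissipation identity, whence
\[
\|\yvec_\alpha(t)\|\le\|\yvec_0\|\quad\forall t,\qquad \int_0^{T/4}\|\nabla\yvec_\alpha\|^2\,dt\le\tfrac12\|\yvec_0\|^2,
\]
with constants that do not depend on $\alpha$ (this is just $\eqref{yalpha-ineqq}_a$ with $\fvec=\ovec$).

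Next, a first mean value argument on $(0,T/4)$ produces a set $E$ of positive measure such that, for every $t_0\in E$, one has $\yvec_\alpha(t_0)\in\Vvec$ with $\|\nabla\yvec_\alpha(t_0)\|^2\le M_0:=4\|\yvec_0\|^2/T$; indeed, the set where $\|\nabla\yvec_\alpha\|^2$ exceeds $M_0$ has measure at most $T/8$, so $|E|\ge T/8$. Fixing such a $t_0$ and restarting the system at time $t_0$, now with an initial datum in $\Vvec$, I would invoke the strong-solution results: if $N=2$, Proposition~\ref{L-S-E-U-L-alpha-2D} yields at once an $\alpha$-independent bound for $\yvec_\alpha$ in $L^2(t_0,t_0+\tau;D(\Avec))$ with $\tau=T/4$; if $N=3$, I would choose $\tau\in(0,T/4]$ small enough that the smallness condition~\eqref{MMM} holds with $M=M_0$ (possible since $\tau$ may be taken below $1/(2(1+C_0)M_0^2)$), and then Proposition~\ref{L-S-E-U-L-alpha-3D} furnishes, via~\eqref{yalpha-strongg}, the analogous bound
\[
\int_{t_0}^{t_0+\tau}\|\yvec_\alpha\|^2_{D(\Avec)}\,dt\le B(\|\yvec_0\|),
\]
again with a constant independent of $\alpha$. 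Note that $t_0+\tau<T/2$.

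A second mean value argument on $(t_0,t_0+\tau)$ then yields a set of positive measure of instants $t^*$ at which $\|\yvec_\alpha(t^*)\|^2_{D(\Avec)}\le \tfrac{2}{\tau}B(\|\yvec_0\|)=:\phi(\|\yvec_0\|)$. Letting $t_0$ range over $E$ and collecting these $t^*$ gives a positive measure subset of $(0,T/2)$ on which the desired estimate holds, which proves~(a) and~(b). Finally, because $M_0$, $\tau$ and $B$ depend on $\yvec_0$ only through $\|\yvec_0\|$ and are built from the explicit expressions $B_1$, $B_3$ (which vanish at $M_0=0$), the resulting $\phi$ is continuous and satisfies $\phi(s)\to0$ as $s\to0^+$; for small data one may simply keep $\tau=T/4$ fixed, so that $\phi$ reduces to a continuous function of $M_0$.

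The main obstacle is the $\alpha$-uniformity of the $D(\Avec)$ bound in dimension three: the naive strong estimate of Proposition~\ref{G-S-E-U-L-alpha} blows up as $\alpha\to0^+$, so it is essential to route the argument through the smallness-based Proposition~\ref{L-S-E-U-L-alpha-3D}. This forces a careful coupling between the level $M_0$ delivered by the energy estimate and the length $\tau$ of the restart window, so that~\eqref{MMM} is met while $\phi$ remains a genuine function of $\|\yvec_0\|$ alone. A secondary technical point is the justification of the formal multiplication by $\Delta\yvec_\alpha$ used to derive the $D(\Avec)$ bound: this is legitimate precisely because restarting at $t_0\in E\subset\{\,t:\yvec_\alpha(t)\in\Vvec\,\}$ places us in the strong-solution regime on $(t_0,t_0+\tau)$.
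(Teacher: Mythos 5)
Your proposal is correct and follows essentially the same route as the paper: an $\alpha$-uniform energy estimate plus a first Chebyshev argument to find a positive-measure set of restart times $t_0$ with $\|\nabla\yvec_\alpha(t_0)\|^2\lesssim \|\yvec_0\|^2/T$, then the small-data strong-solution theory of Proposition~\ref{L-S-E-U-L-alpha-3D} (with its $\alpha$-independent constant $C_0$) on a short window, and a second Chebyshev argument on $\|\yvec_\alpha\|^2_{D(\Avec)}$ to produce the times $t^*$ and the function $\phi$. The only cosmetic difference is that the paper re-derives the window estimate inline with explicit constants (the sets $R_\alpha(k,\tau)$ and $G_\alpha(t_{0,\alpha},k,\tau)$, yielding $\phi(s)=65(1+C_0)(k/\tau)^3 s^6$), whereas you invoke Proposition~\ref{L-S-E-U-L-alpha-3D} as a black box with the window length chosen to satisfy~\eqref{MMM}; both arguments are the same in substance.
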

\begin{proof}
	We are only going to consider the three-dimensional case;
	the proof in the two-dimensional case is very similar and even easier.
	
	The proof consists of several steps:
	
\begin{itemize}
	\item Let us first see that, for any $k > 3/2$ and any $\tau \in (0,T/2]$, the set
\[
	\dis R_\alpha(k,\tau) := \{\, t\in[0,\tau]: \|\nabla\yvec_\alpha(t)\|^2\leq \frac{k}{\tau}\,\|\yvec_0\|^2 \,\}
\]
	is non-empty and its measure $|R_\alpha(k,\tau)|$ satisfies $\dis |R_\alpha(k,\tau)| \geq \tau/k$.
	
	Obviously, we can assume that $\yvec_0 \not= \ovec$.
	Now, if we suppose that  $\dis |R_\alpha(k,\tau)|< \tau/k$, we have:
\[
         \begin{array}{l} \dis
	\int_0^\tau \|\nabla\yvec_\alpha(t)\|^2\,dt \geq \int_{R_\alpha(k,\tau)^c} \|\nabla\yvec_\alpha(t)\|^2\,dt \geq \left(\tau - {\tau \over k}\right) {k \over \tau} \|\yvec_0\|^2
	\\ \noalign{\smallskip} \dis \phantom{\int_0^\tau \|\nabla\yvec_\alpha(t)\|^2\,dt }
	= (k-1) \|\yvec_0\|^2 > {1 \over 2} \|\yvec_0\|^2.
	\end{array}
\]
	But, since $\fvec=0$ in~\eqref{Lalpha}, we also have the following estimate:
\[
	\int_0^\tau\|\nabla\yvec_\alpha(t)\|^2\,dt \leq {1 \over 2} \|\yvec_\alpha(\tau)\|^2 + \int_0^\tau \|\nabla\yvec_\alpha(t)\|^2\,dt
	= {1 \over 2}\|\yvec_0\|^2.
\]
	So, we get a contradiction and, necessarily, $\dis |R_\alpha(k,\tau)|\geq \tau/k$.
	
	\item Let us choose $\tau \in (0,T/2]$, $k > 3/2$, $t_{0,\alpha} \in R_\alpha(k,\tau)$ and
\[
\overline{T}_\alpha \in \left[ t_{0,\alpha} + {\tau^2 \over 4((1+C_0)k^2 \|\yvec_0\|^4} , t_{0,\alpha} + {3\tau^2 \over 8((1+C_0)k^2 \|\yvec_0\|^4} \right],
\]
	where $C_0$ is the constant furnished by Proposition~\ref{L-S-E-U-L-alpha-3D}.
	Since $\|\nabla\yvec_\alpha(t_{0,\alpha})\|^2\leq \frac{k}{\tau}\,\|\yvec_0\|^2$, there exists exactly one strong solution to~\eqref{Lalpha} in
	~$[t_{0,\alpha},\overline{T}_\alpha]$ starting from $\yvec_\alpha(t_{0,\alpha})$ at time~$t_{0,\alpha}$ and satisfying
	\[
\|\nabla\yvec_\alpha(t)\|^2 \leq {2k\over\tau} \|\yvec_0\|^2, \quad \forall t \in [t_{0,\alpha} ,\overline{T}_\alpha].
	\]
	Obviously, it can be assumed that $\overline{T}_\alpha < T$.

	Let us introduce the set
\[
	\dis G_\alpha(t_{0,\alpha},k,\tau) :=
	\{\, t \in [t_{0,\alpha},\overline{T}_\alpha] :~~\|\Delta\yvec_\alpha(t)\|^2 \leq 65(1+C_0) \left({k \over \tau}\right)^3 \|\yvec_0\|^6 \,\}.
\]
	Then, again $G_\alpha(t_{0,\alpha},k,\tau)$ is non-empty and possesses positive measure.
	More precisely, one has
\begin{equation}\label{contrad-2}
	|G_\alpha(t_{0,\alpha},k,\tau)| \geq { \tau^2 \over 8(1+C_0)k^2\|\yvec_0\|^4 } \,.
\end{equation}
	
	Indeed, otherwise we would get
\[
       \begin{array}{c} \dis
	{1 \over 2} \int_{t_{0,\alpha}}^{\overline{T}_\alpha} \|\Delta\yvec_\alpha(t)\|^2\,dt
	\geq {1 \over 2} \int_{G_\alpha(t_{0,\alpha},k,\tau)^c} \|\Delta\yvec_\alpha(t)\|^2\,dt
	\\ \noalign{\smallskip} \dis
	\geq 65\left(\overline{T}_\alpha \!-\! t_{0,\alpha} \!-\! { \tau^2 \over 8(1\!+\!C_0)k^2\|\yvec_0\|^4 } \right) (1+C_0)
	\!\left({k \over \tau}\right)^3 \!\|\yvec_0\|^6
	\\ \noalign{\smallskip} \dis
	\geq {65 k \over 16 \tau} \|\yvec_0\|^2 > 4 {k \over \tau} \|\yvec_0\|^2.
	\end{array}
\]
	However, arguing as in the proof of Proposition~\ref{L-S-E-U-L-alpha-3D}, we also have
\[
         \begin{array}{l} \dis
	{1 \over 2} \int_{t_{0,\alpha}}^{\overline{T}_\alpha} \|\Delta\yvec_\alpha(t)\|^2\,dt
	\leq \|\nabla\yvec_\alpha(\overline{T}_\alpha)\|^2 + {1 \over 2} \int_{t_{0,\alpha}}^{\overline{T}_\alpha} \|\Delta\yvec_\alpha(t)\|^2\,dt
	\\ \noalign{\smallskip} \dis \qquad
	\leq \|\nabla\yvec_\alpha(t_{0,\alpha})\|^2 + C_0 \int_{t_{0,\alpha}}^{\overline{T}_\alpha} \|\nabla\yvec_\alpha(t)\|^6\,dt
	\\ \noalign{\smallskip} \dis \qquad
	\leq {k \over \tau} \|\yvec_0\|^2 + 8 \left( {k \over \tau} \|\yvec_0\|^2 \right)^3 (\overline{T}_\alpha - t_{0,\alpha}) \leq 4 {k \over \tau} \|\yvec_0\|^2.
	\end{array}
\]
	Consequently, we arrive again to a contradiction and this proves~\eqref{contrad-2}.
	
	\item Let us fix $\tau \in (0,T/2]$ and~$k > 3/2$.
	We can now define $\phi:\mathbb{R}_+\mapsto\mathbb{R}_+$ as follows:
\[
	\dis \phi(s) := 65(1+C_0) {k \over \tau}^3 s^6.
\]
	Then, as a consequence of the previous steps, the set
\[
	\{\,t^*\in [0,T/2] : \|\Avec\yvec_\alpha(t^*)\|^2 \leq \phi(\|\yvec_0\|) \,\}
\]
	is non-empty and it measure is bounded from below by a positive quantity independent of~$\alpha$. This ends the proof.
\end{itemize}
\vspace{-0.9cm}
\end{proof}

	We will end this section with some estimates:
\begin{lemma}\label{interpolation}
	Let $s\in [1,2]$ be given, and let us assume that $\fvec\in \Hvec^{s}(\Om)$.
	Then there exist unique functions $\uvec\in D(\Avec^{s/2})$ and $\pi\in H^{s-1}$
	($\pi$ is unique up to a constant) such that
\begin{equation}\label{eq-lap-stokes}
	\left\{
		\begin{array}{lll}
			\uvec-\alpha^2\Delta \uvec +\nabla \pi=\alpha^2\Delta\fvec				& \hbox{in} &       \Om, \\
			\nabla \cdot \uvec= 0                          	                		   			 	& \hbox{in} &       \Om, \\
			\uvec=\ovec  			                          			    				& \hbox{on}&  \Gamma
		\end{array}
	\right.
\end{equation}
	and there exists a constant $C=C(s,\Om)$ independent of $\alpha$ such that
\begin{equation}\label{1eq-lap-stokes}
	\|\uvec\|_{D(\Avec^{s/2})}\leq C \|\fvec\|_{ \Hvec^{s}(\Om)}.
\end{equation}
	Moreover, by interpolation arguments, $\fvec\in \Hvec^{s}(\Om)$, $s\in (m,m+1)$ then there exist unique functions $\uvec\in D(\Avec^{s/2})$
	and $\pi\in H^{s-1}(\Om)$	($\pi$ is unique up to a constant) which are solution of the problem above and  there exists a constant $C=C(m,\Om)$
	such that
\begin{equation}\label{2eq-lap-stokes}
	\|\uvec\|_{D(\Avec^{s/2})}\leq C \|\fvec\|_{ \Hvec^{s}(\Om)}.
\end{equation}
\end{lemma}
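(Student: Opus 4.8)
The plan is to recast the boundary value problem \eqref{eq-lap-stokes} as a single operator equation for the Stokes operator, solve it by Lax--Milgram, and extract the $\alpha$-uniform bound \eqref{1eq-lap-stokes} from one scalar multiplier estimate; the non-integer range is then reached by interpolation. The hard part is precisely the \emph{$\alpha$-independence} of the constant: a crude elliptic estimate for \eqref{eq-lap-stokes} produces constants that blow up as $\alpha \to 0^+$, so the whole point is to exhibit the cancellation that keeps them bounded.

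First I would apply the Leray projector $\Pvec$ to the first equation of \eqref{eq-lap-stokes}. Since $\Pvec$ annihilates gradients and $\uvec \in \Hvec$, the pressure drops out and $-\Pvec\Delta\uvec = \Avec\uvec$, so the problem reduces to $(\mathbf{Id} + \alpha^2\Avec)\uvec = \alpha^2\Pvec\Delta\fvec$ in $\Hvec$. Equivalently, in variational form, one seeks $\uvec \in \Vvec$ with $(\uvec,\vvec) + \alpha^2(\nabla\uvec,\nabla\vvec) = -\alpha^2(\nabla\fvec,\nabla\vvec)$ for all $\vvec \in \Vvec$ (the right-hand side obtained by integrating $\alpha^2(\Delta\fvec,\vvec)$ by parts, the boundary term vanishing since $\vvec \in \Hvec^1_0$). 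The bilinear form on the left is continuous and coercive on $\Vvec$, so Lax--Milgram yields a unique $\uvec$; the pressure $\pi$ is then recovered, uniquely up to an additive constant, from de Rham's theorem, because $\alpha^2\Delta\fvec - \uvec + \alpha^2\Delta\uvec$ is a gradient, and it lies in $H^{s-1}$ by the regularity of $\uvec$ and $\fvec$.

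For the uniform estimate I would work in the eigenbasis $(\wvec_j)$ of $\Avec$. Setting $g_j = \alpha^2(\Delta\fvec,\wvec_j)$, the solution is $\uvec = \sum_j \frac{g_j}{1+\alpha^2\lambda_j}\,\wvec_j$, so that, using the equivalent norm of Theorem~\ref{embb},
\[
	\|\uvec\|_{D(\Avec^{s/2})}^2 \ \simeq\ \sum_j \lambda_j^{s-2}\left(\frac{\alpha^2\lambda_j}{1+\alpha^2\lambda_j}\right)^2 (\Delta\fvec,\wvec_j)^2 \ \le\ \sum_j \lambda_j^{s-2}\,(\Pvec\Delta\fvec,\wvec_j)^2,
\]
because the scalar multiplier obeys $0 \le \frac{\alpha^2\lambda}{1+\alpha^2\lambda} \le 1$ for every $\lambda > 0$; this one inequality is exactly what renders $C$ independent of $\alpha$. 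At the endpoint $s=2$ the right-hand side equals $\|\Pvec\Delta\fvec\|^2 \le \|\Delta\fvec\|^2 \le C\|\fvec\|_{\Hvec^2}^2$, giving $\|\uvec\|_{D(\Avec)} \le C\|\fvec\|_{\Hvec^2}$. At the endpoint $s=1$ I would instead test the variational identity with $\vvec = \uvec$, obtaining $\|\uvec\|^2 + \alpha^2\|\nabla\uvec\|^2 \le \frac{\alpha^2}{2}\|\nabla\fvec\|^2 + \frac{\alpha^2}{2}\|\nabla\uvec\|^2$, hence $\|\nabla\uvec\| \le \|\nabla\fvec\|$, that is $\|\uvec\|_{D(\Avec^{1/2})} \le C\|\fvec\|_{\Hvec^1}$, again uniformly in $\alpha$.

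With the two endpoint estimates $\Hvec^1 \to D(\Avec^{1/2})$ and $\Hvec^2 \to D(\Avec)$ in hand, the full range $s \in [1,2]$ follows by interpolation: both scales $\{\Hvec^s\}$ and $\{D(\Avec^{s/2})\}$ are interpolation scales (consistent with the $K$-method description underlying Theorem~\ref{embb}), so the linear map $\fvec \mapsto \uvec$ inherits \eqref{1eq-lap-stokes} with an $\alpha$-independent constant. The \emph{moreover} statement is obtained by repeating the scheme at higher integer orders and interpolating on each interval $(m,m+1)$. Here the residual obstacle is that for $s \ge 2$ the identification of $D(\Avec^{r/2})$ with a Sobolev space is no longer immediate (it involves boundary compatibility beyond the range of Theorem~\ref{embb}); I would circumvent it by keeping the multiplier bound, which reduces the task to estimating $\Pvec\Delta\fvec$ in the relevant domain of $\Avec$, and invoking the elliptic regularity of the Stokes operator at the integer endpoints before interpolating, yielding a constant depending only on $m$ and $\Om$.
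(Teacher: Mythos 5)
Your argument is correct, and it is worth comparing with the paper's, which is deliberately terse: the authors prove the integer cases $s=1,2$ by ``adapting the proof of Proposition 2.3 in \cite{TEMAM}'' --- i.e., by redoing the PDE-level regularity estimates for the perturbed Stokes problem while tracking the $\alpha$-dependence --- and then interpolate for non-integer $s$, citing \cite{TARTAR}. Your skeleton (two integer endpoints plus interpolation of the linear map $\fvec\mapsto\uvec$ between the scales $\{\Hvec^s(\Om)\}$ and $\{D(\Avec^{s/2})\}$) is the same, but your endpoint treatment is genuinely different: projecting to get $(\mathbf{Id}+\alpha^2\Avec)\uvec=\alpha^2\Pvec\Delta\fvec$ and reading the $s=2$ bound off the scalar multiplier $0\le\alpha^2\lambda/(1+\alpha^2\lambda)\le 1$ in the Stokes eigenbasis, and the $s=1$ bound off the energy identity, isolates the $\alpha$-uniformity --- which is the entire point of the lemma and is left implicit in the paper's citation --- in two elementary inequalities, and the only elliptic regularity you invoke is that of the \emph{unperturbed} Stokes operator, already packaged in Theorem~\ref{embb} and $D(\Avec)\hookrightarrow\Hvec^2(\Om)$. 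What the Temam-style route buys in exchange is a direct, quantitative estimate on the pressure, which you recover only qualitatively via de~Rham (your passing claim that $\nabla\pi\in\Hvec^{s-2}$ yields $\pi\in H^{s-1}$ is fine by Ne\v{c}as' inequality at the endpoints plus interpolation, though note your $\pi$-bound is not $\alpha$-uniform --- the lemma does not claim it is). Two small points of care: at $s=2$ the spectral expansion of $\uvec$ is legitimate because $(\mathbf{Id}+\alpha^2\Avec)^{-1}$ maps $\Hvec$ into $D(\Avec)$, so this should be said; and your sketch of the ``moreover'' clause correctly flags the real obstruction for $m\ge 2$, namely that $\Pvec\Delta\fvec$ need not satisfy the boundary conditions required for membership in $D(\Avec^{(m-2)/2})$, so the multiplier bound alone does not close the argument there --- but since the paper only ever applies the lemma with $s\in[1,2]$ (e.g., $s=2\sigma$ with $N/4<\sigma<1$, and $s=1$ in the proof of Lemma~\ref{regularity_BC}), and the paper's own statement of that clause is no more precise, this is harmless.
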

	
    When $s$ is an integer ($s=1$ or $s=2$), the proof can be obtained by adapting the proof of Proposition $2.3$ in \cite{TEMAM}.
	For other values of $s$, it suffices to use a classical interpolation argument (see \cite{TARTAR}).

\

\subsection{Carleman inequalities and null controllability}

	In this subsection, we will recall some Carleman inequalities and a null controllability result for the Oseen system
\begin{equation}\label{oseen}
	\left\{
		\begin{array}{lll}
     			\yvec_t  - \Delta \yvec +(\hvec \cdot \nabla) \yvec+ \nabla p = \vvec1_\omega	 & \hbox{in}&  Q,         \\
     			\nabla \cdot \yvec = 0     											 & \hbox{in}&  Q,         \\
     			\yvec = \ovec     												 &\hbox{on}&  \Sigma, \\
     			\yvec(0) = \yvec_0												 &\hbox{in}& \Omega,
		\end{array}
	\right.
\end{equation}
	where $\hvec=\hvec(\xvec,t)$ is given.
	The null controllability problem for \eqref{oseen} at time $T>0$ is the following:
	
\begin{quote}{\it
	For any $\yvec_0\in \Hvec$, find $\vvec\in \Lvec^2(\omega\times(0,T))$ such that the associated solution to~\eqref{oseen} satisfies~\eqref{null_condition}.}
\end{quote}
	
	We have the following result from \cite{FC-G-P} (see also \cite{IMANU}):
	
\begin{thm}\label{NC-OSEENN}
	 Assume that $\hvec\in \Lvec^\infty(Q)$ and $\nabla\cdot\hvec = 0$. Then, the linear system \eqref{oseen} is null-controllable at any time $T>0$.
	 More precisely, for each $\yvec_0\in \Hvec$ there exists	 $\vvec\in L^\infty(0,T; \Lvec^2(\omega))$ such that the corresponding solution to \eqref{oseen}
	 satisfies \eqref{null_condition}. Furthermore, the control $\vvec$ can be chosen satisfying the estimate
\begin{equation}\label{contr}
	\|\vvec\|_{L^\infty(\Lvec^2(\om))}\leq e^{K(1+\|\hvec\|^2_\infty)}\|\yvec_0\|,
\end{equation}
	where $K$ only depends on $\Om$, $\om$ and~$T$.
\end{thm}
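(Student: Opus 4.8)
The plan is to obtain Theorem~\ref{NC-OSEENN} by duality, reducing the null controllability of~\eqref{oseen} to an observability estimate for the adjoint Oseen system
\[
	\left\{
		\begin{array}{lll}
			-\boldsymbol{\varphi}_t - \Delta\boldsymbol{\varphi} - (\hvec\cdot\nabla)\boldsymbol{\varphi} + \nabla q = \ovec & \hbox{in} & Q, \\
			\nabla\cdot\boldsymbol{\varphi} = 0 & \hbox{in} & Q, \\
			\boldsymbol{\varphi} = \ovec & \hbox{on} & \Sigma, \\
			\boldsymbol{\varphi}(T) = \boldsymbol{\varphi}_T & \hbox{in} & \Om,
		\end{array}
	\right.
\]
where the hypothesis $\nabla\cdot\hvec = 0$ has been used to transpose the convective term (up to a harmless lower-order contribution absorbed below). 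By the Hilbert Uniqueness Method, the null controllability of~\eqref{oseen} with the quantitative bound~\eqref{contr} is equivalent to an observability inequality of the form $\|\boldsymbol{\varphi}(0)\|^2 \leq e^{K(1+\|\hvec\|^2_\infty)}\jjntqT |\boldsymbol{\varphi}|^2\,dx\,dt$ over $\om\times(0,T)$, so the whole matter reduces to establishing such an estimate with a constant exhibiting exactly this dependence on $\|\hvec\|_\infty$.

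The heart of the argument is a global Carleman inequality for the adjoint system. First I would fix $\om_0 \subset\subset \om$ and a function $\eta^0 \in C^2(\overline\Om)$ with $\eta^0 > 0$ in $\Om$, $\eta^0 = 0$ on $\Gamma$ and $|\nabla\eta^0| > 0$ in $\overline{\Om\setminus\om_0}$, and build the standard Fursikov--Imanuvilov weights (singular as $t\to 0^+$ and $t\to T^-$ through the factor $[t(T-t)]^{-1}$) together with a Carleman parameter $s$ and a frequency parameter $\lambda$. Applying the known Carleman estimate for the Stokes operator (which handles the pressure term $\nabla q$, the delicate point for fluid systems) to $\boldsymbol{\varphi}$, the convective perturbation $(\hvec\cdot\nabla)\boldsymbol{\varphi}$ is treated as a lower-order term: since $\|(\hvec\cdot\nabla)\boldsymbol{\varphi}\|_{L^2} \leq \|\hvec\|_\infty\|\nabla\boldsymbol{\varphi}\|_{L^2}$, it can be absorbed into the left-hand gradient term provided $s$ is taken proportional to $1 + \|\hvec\|^2_\infty$. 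This is precisely the mechanism that produces the factor $e^{K(1+\|\hvec\|^2_\infty)}$: the boundary/initial contributions generated in the course of the estimate carry weights of the form $e^{sC}$, and the forced lower bound on $s$ converts them into the stated exponential constant. Integrating in time and combining with the energy dissipation for the (backward) adjoint equation then yields the observability inequality above.

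Finally, to recover not just an $L^2$ control but one in $L^\infty(0,T;\Lvec^2(\om))$ with the same constant, I would construct $\vvec$ through a weighted Fursikov--Imanuvilov variational problem, obtaining $\vvec = \rho^{-2} 1_\om \boldsymbol{\varphi}$ for the minimizer $\boldsymbol{\varphi}$, where the weight $\rho = \rho(t)$ is chosen to be constant on $[0,T/2]$ and to blow up only as $t\to T^-$. Because $\rho^{-2}$ stays bounded away from $t=T$, the Carleman estimate already gives $\vvec \in L^2$; the improvement to $L^\infty$ in time follows from interior parabolic regularity for the smooth adjoint solution on $[0,T-\delta]$ together with the decay forced by $\rho$ near $t=T$, all controlled by the same $e^{K(1+\|\hvec\|^2_\infty)}$ factor. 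The main obstacle is the Carleman step: obtaining the Stokes Carleman inequality with the pressure correctly handled and, above all, tracking the dependence of the observability constant on $\|\hvec\|_\infty$ sharply enough to get the explicit $e^{K(1+\|\hvec\|^2_\infty)}$ bound, which is what makes the estimate usable in the subsequent uniform-in-$\alpha$ fixed-point arguments.
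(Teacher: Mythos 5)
Your proposal is correct and follows essentially the same route as the paper: a Carleman inequality for the adjoint Oseen system with the dependence on $\|\hvec\|_\infty$ tracked through the large parameters (Proposition~\ref{Carleman}, taken from \cite{FC-G-P}), followed by a weighted Fursikov--Imanuvilov extremal problem whose Euler--Lagrange solution $\vvec=-\hat\rho_0^{-2}\mat{\varphi}1_\om$ is upgraded to $L^\infty(0,T;\Lvec^2(\om))$ by exploiting the weight structure, exactly as the paper does via the boundedness of $\hat\rho_0^{-2}\rho_2$ and the estimate $\|\rho_2^{-1}\mat{\varphi}\|_{L^\infty(\Lvec^2)}\leq C\|\hat\rho_0^{-1}\mat{\varphi}\|_{L^2(\Lvec^2(\om))}$. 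The only deviations are immaterial: you phrase the first step as an explicit HUM observability inequality and absorb the convective term by enlarging $s$, whereas the cited estimate enlarges $\lambda$ (with lower bound $\hat\lambda(1+\|\hvec\|_\infty+e^{\hat\lambda T\|\hvec\|_\infty^2})$), both yielding the constant $e^{K(1+\|\hvec\|_\infty^2)}$, and your weights constant on $[0,T/2]$ correspond to the modified weights used in \cite{FC-G-P}.
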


	The proof is a consequence of an appropriate Carleman inequality for the adjoint system of \eqref{oseen}.

	More precisely, let us consider the backwards in time system
\begin{equation}\label{oseen adjoint}
	\left\{
		\begin{array}{lll}
    			-\mat{\varphi}_t  - \Delta \mat{\varphi} -(\mathbf{h} \cdot \nabla) \mat{\varphi} + \nabla q = \mathbf{G}     & \hbox{in}&  Q,        \\
     			\nabla \cdot \mat{\varphi} = 0    													         & \hbox{in}&  Q,         \\
     			\mat{\varphi} = \textbf{0} 															         & \hbox{on}& \Sigma, \\
    			\mat{\varphi}(T) = \mat{\varphi}_0, 													         & \hbox{in}&  \Omega.
		\end{array}
	\right.
 \end{equation}

	The following result is established in \cite{FC-G-P}:
\begin{propo}\label{Carleman}
	Assume that $\hvec\in \Lvec^{\infty}(Q)$ and $\nabla\cdot\hvec=0$.
	There exist positive continuous functions $\alpha$, $\alpha^*$, $\hat\alpha$,  $\xi$, $\xi^*$ and $\hat\xi$ and positive
	constants $ \hat{s}$, $\hat{\lambda}$ and~$\widehat{C}$, only depending on $\Omega$ and $\omega$, such that, for any
	$\mat{\varphi} _0\in \Hvec$ and any ${\mathbf{G}}\in\Lvec^2(Q)$, the solution to the adjoint system \eqref{oseen adjoint} satisfies:
\begin{equation}\label{CAR}
	\begin{array}{c}
		\dis
		\iint_Q e^{-2s\alpha} \left[s^{-1}\xi^{-1}(|\mat{\varphi}_{t}|^2+|\Delta\mat{\varphi}|^2)+s\xi\lambda^2
		|\nabla\mat{\varphi}|^2+s^3\xi^3\lambda^4 |\mat{\varphi}|^2\right]d\xvec\,dt\\
		\dis
		\leq\widehat{C}(1+T^2)\Biggl(s^{15/2}\lambda^{20}\displaystyle\iint_Qe^{-4s\hat{\alpha}+2s\alpha^*}{\xi^*}^{15/2}|\mathbf{{G}}|^2\,d\mathbf{x}\,dt  \Bigr.\\
		\dis
		+\Bigl. s^{16}\lambda^{40}\displaystyle\iint_{\omega\times(0,T)} e^{-8s\hat{\alpha}+6s\alpha^*}{\xi^*}^{16}|\mat{\varphi}|^2\,d\mathbf{x}\,dt \Biggr),
	\end{array}
\end{equation}
	for all  $s \geq  \hat{s}(T^4 + T^8) $ and for all $\lambda \geq \hat{\lambda}\Bigl(1 + \|\mathbf{h}\|_{\infty}+ e^{\hat{\lambda}T \|\mathbf{h}\|^2_{\infty}}\Bigl)$.
\end{propo}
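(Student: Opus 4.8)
The plan is to reduce \eqref{CAR} to the classical Fursikov--Imanuvilov Carleman estimate for the scalar heat operator, treating both the pressure gradient $\nabla q$ and the convective term $(\hvec\cdot\nabla)\mat{\varphi}$ as perturbations that can be absorbed once $\lambda$ is large. First I would fix the weights in the standard way: choosing $\eta^0\in C^2(\overline\Om)$ with $\eta^0>0$ in $\Om$, $\eta^0=0$ on $\Gamma$ and $|\nabla\eta^0|>0$ in $\overline\Om\setminus\om_0$ for some $\om_0\subset\subset\om$, I set $\xi=e^{\lambda\eta^0}/(t(T-t))$ and $\alpha=(e^{2\lambda\|\eta^0\|_\infty}-e^{\lambda\eta^0})/(t(T-t))$, together with the $\xvec$-independent (``frozen'') weights $\alpha^*=\min_\xvec\alpha$, $\hat\alpha=\max_\xvec\alpha$, $\xi^*=\max_\xvec\xi$ and $\hat\xi=\min_\xvec\xi$, the latter being needed to put $\mathbf{G}$ on the right-hand side in a plain $\Lvec^2(Q)$ norm. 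The various powers of $s$ and $\lambda$, the factor $(1+T^2)$, and the thresholds $s\ge\hat s(T^4+T^8)$ are all bookkeeping consequences of this choice (the $t(T-t)$ denominators being responsible for the $T$-dependence); the real work is to keep every constant independent of $\mat{\varphi}_0$ and $\mathbf{G}$ and to control exactly how the admissible range of $\lambda$ depends on $\|\hvec\|_\infty$.

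The core is a Carleman estimate for the adjoint Stokes operator, i.e.\ \eqref{oseen adjoint} with the term $(\hvec\cdot\nabla)\mat{\varphi}$ removed. Applying the scalar heat Carleman inequality componentwise to $-\mat{\varphi}_t-\Delta\mat{\varphi}=\mathbf{G}-\nabla q$ immediately produces the desired left-hand side of \eqref{CAR}, but it leaves a pressure contribution on the right that is \emph{not} lower order and cannot simply be absorbed. This is the main obstacle. I would remove $\nabla q$ by differentiation: taking the divergence of the Stokes equation and using $\nabla\cdot\mat{\varphi}=0$ gives an elliptic problem $\Delta q=\nabla\cdot\mathbf{G}$ for the pressure; alternatively, following \cite{FC-G-P}, one applies $\nabla\times$ (or works directly with $\Delta\mat{\varphi}$) to obtain a system from which the pressure has disappeared, proves a Carleman estimate for that system, and then recovers the weighted norms of $\mat{\varphi}$ and $\nabla\mat{\varphi}$ by elliptic regularity. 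Either route forces one to interpolate between several global weighted norms and to pay for a local pressure term with a local observation of $\mat{\varphi}$ alone on $\om$; this interpolation is precisely what produces the large exponents $15/2,16,20,40$ and the passage from the pointwise weights $\alpha,\xi$ to the frozen weights $\alpha^*,\hat\alpha,\xi^*$.

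Finally I would reinsert the convective term. Since $\hvec\in\Lvec^\infty(Q)$ and $\nabla\cdot\hvec=0$, one has $|(\hvec\cdot\nabla)\mat{\varphi}|\le\|\hvec\|_\infty|\nabla\mat{\varphi}|$, so at the level of the pointwise-weight estimate its contribution is bounded by $C\|\hvec\|_\infty^2\iint_Q e^{-2s\alpha}\,\xi\,|\nabla\mat{\varphi}|^2$. The gradient term already present on the left carries the factor $s\xi\lambda^2$, so this perturbation is absorbed as soon as $\lambda^2$ dominates $\|\hvec\|_\infty^2$; carrying out the absorption together with the ratios of the time weights (whose products generate the exponential) yields exactly the admissible range $\lambda\ge\hat\lambda(1+\|\hvec\|_\infty+e^{\hat\lambda T\|\hvec\|^2_\infty})$ stated in the proposition. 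Combining the Stokes estimate with this absorption, and then freezing the weights on the source term as described above, gives \eqref{CAR} with $\widehat C$, $\hat s$ and $\hat\lambda$ depending only on $\Om$ and $\om$, as claimed.
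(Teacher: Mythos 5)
The first thing to note is that the paper contains no proof of Proposition~\ref{Carleman}: it is quoted verbatim from \cite{FC-G-P}, so your proposal is really an attempt to reconstruct the proof in that reference, and in outline (Carleman estimate for the adjoint Stokes system, elimination of the pressure, absorption of the convective term for large $\lambda$) it does follow the right strategy. However, two of your steps fail as written. Your frozen weights are inverted: for \eqref{CAR} to be consistent one needs $\hat\alpha(t)=\min_{\xvec\in\overline\Om}\alpha(\xvec,t)$ and $\alpha^*(t)=\max_{\xvec\in\overline\Om}\alpha(\xvec,t)$, because then $-4\hat\alpha+2\alpha^*\geq -2\hat\alpha\geq -2\alpha$ pointwise and the right-hand side weights dominate $e^{-2s\alpha}$. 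With your assignment ($\alpha^*=\min$, $\hat\alpha=\max$) one gets $-4\hat\alpha+2\alpha^*\leq -2\alpha$, so the right-hand side of \eqref{CAR} would be exponentially \emph{smaller} than the left-hand side as $s\to\infty$: you would be trying to prove a strictly stronger estimate than the one in \cite{FC-G-P}, and no absorption argument can deliver it. Relatedly, your choice $\xi=e^{\lambda\eta^0}/(t(T-t))$ is inconsistent with the stated threshold: degree-one denominators give $s\geq C(T+T^2)$, whereas $s\geq\hat s(T^4+T^8)$ corresponds to the weights of \cite{FC-G-P}, which carry $(t(T-t))^4$ in the denominator (extra time decay that is actually needed in the pressure step), so the thresholds are not mere bookkeeping consequences of your choice.

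The second and more serious issue is that the two genuinely hard points are left as black boxes. For the pressure, taking the divergence gives $\Delta q=\nabla\cdot\mathbf{G}$ only for the Stokes adjoint; for \eqref{oseen adjoint} with $\hvec\in\Lvec^\infty(Q)$ the quantity $\nabla\cdot((\hvec\cdot\nabla)\mat{\varphi})$ involves derivatives of $\hvec$ and makes no sense, and even in the Stokes case the elliptic equation for $q$ does not by itself remove the \emph{local} pressure observation --- the heart of \cite{FC-G-P} is precisely the chain of weighted regularity and interpolation estimates (with right-hand sides in weighted negative-order spaces) that trades the pressure for a local observation of $\mat{\varphi}$ alone, and this is where the exponents $15/2$, $16$, $20$, $40$ and the frozen weights actually come from; naming the step is not carrying it out. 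For the $\lambda$-threshold, your absorption argument does not produce the stated range: with the correct degree-four weights, absorbing $\|\hvec\|_\infty^2\iint_Q e^{-2s\alpha}|\nabla\mat{\varphi}|^2\,d\xvec\,dt$ into $s\lambda^2\iint_Q e^{-2s\alpha}\xi|\nabla\mat{\varphi}|^2\,d\xvec\,dt$ only requires $\lambda\gtrsim 1+\|\hvec\|_\infty$ (and with your degree-one weights the constant degenerates as $T\to 0$, forcing $\lambda\gtrsim\|\hvec\|_\infty/T$, which contradicts the statement). The exponential term $e^{\hat\lambda T\|\hvec\|_\infty^2}$ cannot ``fall out'' of this absorption; it enters through Gronwall-type energy estimates comparing $\mat{\varphi}$ at different times in the pressure-elimination argument. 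So your proposal identifies the correct strategy but, as it stands, neither proves \eqref{CAR} nor accounts for its specific structure.
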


	 Now, we are going to construct the a null-control for \eqref{oseen} like in \cite{FC-G-P}. First, let us introduce the auxiliary extremal problem
\begin{equation}\label{optm}
	\left\{
		\begin{array}{l}
			\noalign{\smallskip}
			\text{Minimize } \ \displaystyle{\frac{1}{2}\left\{\iint_Q\hat\rho^2|\yvec|^2\,d\xvec\,dt+\iint_{\omega\times(0,T)}\hat\rho_0^2|\vvec|^2\,d\xvec\,dt\right\}}\\
			\noalign{\smallskip}	
			\text{Subject to}~(\yvec,\vvec) \in  \mathcal{M}(\yvec_0,T),
		\end{array}
	\right.
\end{equation}
	where the linear manifold $\dis\mathcal{M}(\yvec_0,T)$ is given by
\[
	\dis\mathcal{M}(\yvec_0,T) = \{\, (\yvec,\vvec) : \vvec \in \Lvec^2(\omega \times (0,T)), \ (\yvec,p)~\hbox{solves \eqref{oseen}} \,\}
\]
	and $\hat\rho$, $\hat\rho_0$ are respectively given by
\[
	\hat\rho=s^{-15/4}\lambda^{-10}e^{2s\hat{\alpha}-s\alpha^*}{\xi^*}^{-15/4}, \quad \hat\rho_0=s^{-8}\lambda^{-20} e^{4s\hat{\alpha}-3s\alpha^*}{\xi^*}^{-8}.
\]
	
	It can be proved that  \eqref{optm} possesses exactly one solution $(\yvec,\vvec)$ satisfying
\[
	\|\vvec\|_{L^2(\Lvec^2(\om))}\leq  e^{K(1+\|\hvec\|^2_\infty)}\|\yvec_0\|,
\]
	where $K$ only depends on $\Om$, $\om$ and $T$.

	Moreover, thanks to the Euler-Lagrange characterization, the solution to the extremal problem \eqref{optm} is given by
\[
	\yvec=\hat\rho^{-2}(-\mat{\varphi}_t  - \Delta \mat{\varphi} -(\mathbf{h} \cdot \nabla) \mat{\varphi} + \nabla q)
	~~\hbox{and}~~\vvec=-\hat\rho_0^{-2}\mat{\varphi}1_{\om}.
\]

	From the Carleman inequality~\eqref{CAR}, we can conclude that $\rho_2^{-1}\mat{\varphi}\in L^\infty(0,T;\Lvec^2(\Om))$ and
\[
	\|\rho_2^{-1}\mat{\varphi}\|_{L^\infty(\Lvec^2)}\leq C\|\hat\rho_0^{-1}\mat{\varphi}\|_{L^2(\Lvec^2(\om))},
\]
	where $\rho_2=s^{1/2}\xi^{1/2}e^{s\alpha}$.\\
	Hence,
\[
	\vvec=-(\hat\rho_0)^{-2}\mat{\varphi}1_{\om}=-(\hat\rho_0^{-2}\rho_2)(\rho_2^{-1}\mat{\varphi}1_{\om})\in L^\infty(0,T;\Lvec^2(\Om))
\]
	and, therefore,
\[
	\|\vvec\|_{L^\infty(\Lvec^2(\om))}\leq C \|\vvec\|_{L^2(\Lvec^2(\om))}\leq e^{K(1+\|\hvec\|^2_\infty)}\|\yvec_0\|.
\]


\section{The distributed case: Theorems \ref{NC-LERAY} and \ref{CONVERGENCE}}\label{Sec3}

	This section is devoted to prove the local null controllability of~\eqref{CLalpha} and the uniform controllability property in~Theorem~\ref{CONVERGENCE}.
	\\
	\\
\noindent
{\bf Proof of Theorem~\ref{NC-LERAY}:}
	We will use a fixed point argument.
	Contrarily to the case of the Navier-Stokes equations, it is not sufficient to work here with controls in $\Lvec^2(\om\times(0,T))$.
	Indeed, we need a space $\mathbf{Y}$ for $\yvec$ that ensures $\zvec$ in $\Lvec^\infty(Q)$ and a space $\mathbf{X}$ for $\vvec$
	guaranteeing that the solution to \eqref{oseen} with $\hvec=\zvec$ belongs to a compact set of $\mathbf{Y}$. Furthermore,
	we want estimates in $\mathbf{Y}$ and $\mathbf{X}$ independent of $\alpha$.
	
	In view of Lemma~\ref{regularity}, in order to prove Theorem~\ref{NC-LERAY}, we just need to consider the case in which the initial state $\yvec_0$
	belongs to $D(\Avec)$ and possesses a sufficiently small norm in $D(\Avec)$.

	Let us fix $\sigma$ with $N/4<\sigma<1$.
	Then, for each $\tilde\yvec\in L^\infty(0, T;D(\Avec^{\sigma}))$, let $(\zvec,\pi)$ be the unique solution to
\[
	\left\{
		\begin{array}{lll}
			\zvec-\alpha^2\Delta \zvec +\nabla \pi=\tilde\yvec                         & \hbox{in} &       Q,    \\
			\nabla \cdot \zvec= 0                          	                		            & \hbox{in} &       Q,     \\
			\zvec=\textbf{0}                                                       			    & \hbox{on}&  \Sigma.
		\end{array}
	\right.
\]
	Since $\tilde\yvec\in L^\infty(0, T;D(\Avec^{\sigma}))$, it is clear that $\zvec\in L^\infty(0, T;D(\Avec^{\sigma}))$.
	Then, thanks to Theorem \ref{embb}, we have $\zvec\in \Lvec^\infty(Q)$ and the following is satisfied:
\begin{equation}\label{zalpha-ineqq}
	\begin{alignedat}{2}
		\|\mathbf{z}\|_{L^\infty(0, T;D(\Avec^{\sigma}))}^2 + 2\alpha^2 \|\mathbf{z}\|^2_{L^\infty(D(\Avec^{1/2+\sigma}))}
		\leq&~\|\tilde\yvec\|_{L^\infty(0, T;D(\Avec^{\sigma}))}^2, \\
		2\alpha^2\|\mathbf{z}\|^2_{L^\infty(D(\Avec^{1/2+\sigma}))}+\alpha^4\|\zvec\|^2_{L^\infty(D(\Avec^{1+\sigma}))}
		\leq&~\|\tilde\yvec\|_{L^\infty(0, T;D(\Avec^{\sigma}))}^2.
	\end{alignedat}
\end{equation}
	In particular, we have:
\[
	\|\mathbf{z}\|_{L^\infty(0, T;D(\Avec^{\sigma}))}	\leq\|\tilde{\mathbf{y}}\|_{L^\infty(0, T;D(\Avec^{\sigma}))}.
\]

	Let us consider the system \eqref{oseen} with $\hvec$ replaced by $\zvec$.
	In view of Theorem \ref{NC-OSEENN}, we can associate to $\zvec$ the null control $\vvec$ of minimal norm in
	$L^\infty(0,T;\Lvec^2(\omega))$ and the corresponding solution $(\yvec,p)$ to \eqref{oseen}.

	Since $\yvec_0\in D(\Avec)$, $\zvec\in \Lvec^\infty(Q)$ and $\vvec\in L^{\infty}(0,T;\Lvec^2(\omega))$, we have
\[
	\begin{array}{c}\dis
		\yvec \in L^2(0, T;D(\Avec)) \cap C^0([0, T];\Vvec),~\dis \yvec_t \in L^2(0,T;\Hvec)
	\end{array}
\]
	and the following estimate holds:
\begin{equation}\label{yy-ineqq}
	\|\yvec_t\|^2_{L^2(\Hvec)}+\|\yvec\|^2_{L^2(D(\Avec)) }+\|\mathbf{y}\|^2_{L^\infty(\Vvec)}
	\leq C(\|\mathbf{y}_0\|^2_{\Vvec}+\|\vvec\|^2_{L^\infty(\Lvec^{2}(\om))})e^{C\|\zvec\|^2_{\infty}}.
\end{equation}

	We will use the following result:
\begin{lemma}\label{Regularity}
	One has $\yvec\in L^\infty(0, T;D(\Avec^{\sigma'}))$, for all $\sigma'\in(\sigma,1)$, with
\[
\begin{alignedat}{2}
	\|\yvec\|_{L^\infty(D(\Avec^{\sigma'}))} \leq  C(\|\yvec_0\|_{D(\Avec)}+\|\vvec\|_{L^\infty(\Lvec^2(\om))})e^{C\|\tilde{\mathbf{y}}\|^2_
	{L^\infty(D(\Avec^{\sigma}))}}.
\end{alignedat}
\]
\end{lemma}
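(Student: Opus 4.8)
The plan is to represent $\yvec$ by the variation-of-constants formula attached to the Stokes semigroup and to estimate its $D(\Avec^{\sigma'})$-norm through the smoothing bound \eqref{t-r} of Theorem~\ref{estimatesemigroup}. Applying the Leray projector $\Pvec$ to the motion equation in \eqref{oseen} (with $\hvec$ replaced by $\zvec$), the system can be rewritten as $\yvec_t+\Avec\yvec=\Pvec(\vvec1_\omega)-\Pvec((\zvec\cdot\nabla)\yvec)$, $\yvec(0)=\yvec_0$. Since the solution constructed above satisfies $\yvec\in L^2(0,T;D(\Avec))$ and $\yvec_t\in L^2(0,T;\Hvec)$, it is a strong solution and coincides with the mild solution
\[
\yvec(t)=e^{-t\Avec}\yvec_0+\int_0^t e^{-(t-s)\Avec}\bigl[\Pvec(\vvec1_\omega)(s)-\Pvec((\zvec\cdot\nabla)\yvec)(s)\bigr]\,ds .
\]
Recalling that $\|\cdot\|_{D(\Avec^{\sigma'})}$ is equivalent to $\|\Avec^{\sigma'}\cdot\|$, I would apply $\Avec^{\sigma'}$ to this identity and bound the three resulting contributions separately.

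For the first term, $\yvec_0\in D(\Avec)$ and $\sigma'<1$ give $\|\Avec^{\sigma'}e^{-t\Avec}\yvec_0\|=\|e^{-t\Avec}\Avec^{\sigma'}\yvec_0\|\le\|\Avec^{\sigma'}\yvec_0\|\le C\|\yvec_0\|_{D(\Avec)}$, uniformly in $t$. For the control term, \eqref{t-r} together with $\|\Pvec(\vvec1_\omega)(s)\|\le\|\vvec(s)\|_{\Lvec^2(\omega)}$ yields
\[
\Bigl\|\int_0^t\Avec^{\sigma'}e^{-(t-s)\Avec}\Pvec(\vvec1_\omega)(s)\,ds\Bigr\|\le C\int_0^t(t-s)^{-\sigma'}\,ds\,\|\vvec\|_{L^\infty(\Lvec^2(\omega))}\le C\|\vvec\|_{L^\infty(\Lvec^2(\omega))},
\]
the time integral being finite precisely because $\sigma'<1$. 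The decisive term is the convective one, and here I would use the pointwise inequality $\|(\zvec\cdot\nabla)\yvec(s)\|\le\|\zvec(s)\|_{\infty}\|\nabla\yvec(s)\|$ to deduce that $\Pvec((\zvec\cdot\nabla)\yvec)\in L^\infty(0,T;\Hvec)$, with $\|\Pvec((\zvec\cdot\nabla)\yvec)\|_{L^\infty(\Hvec)}\le\|\zvec\|_{\infty}\|\yvec\|_{L^\infty(\Vvec)}$; then \eqref{t-r} gives the bound $C\int_0^t(t-s)^{-\sigma'}\,ds\,\|\zvec\|_{\infty}\|\yvec\|_{L^\infty(\Vvec)}$ for this contribution.

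The key idea that avoids any singular Gronwall argument is that the convective term has now been controlled by $\|\yvec\|_{L^\infty(\Vvec)}$ rather than by $\|\yvec\|_{L^\infty(D(\Avec^{\sigma'}))}$, so I can simply substitute the energy estimate \eqref{yy-ineqq}, namely $\|\yvec\|_{L^\infty(\Vvec)}\le C(\|\yvec_0\|_\Vvec+\|\vvec\|_{L^\infty(\Lvec^2(\omega))})e^{C\|\zvec\|_\infty^2}$. Collecting the three estimates, using $\|\yvec_0\|_\Vvec\le C\|\yvec_0\|_{D(\Avec)}$ and the elementary inequality $s\,e^{Cs^2}\le C'e^{C's^2}$ to absorb the prefactor $\|\zvec\|_\infty$, I obtain
\[
\|\Avec^{\sigma'}\yvec(t)\|\le C\bigl(\|\yvec_0\|_{D(\Avec)}+\|\vvec\|_{L^\infty(\Lvec^2(\omega))}\bigr)e^{C\|\zvec\|_\infty^2},\qquad t\in[0,T].
\]
It remains to replace $\|\zvec\|_\infty$ by the data: from \eqref{zalpha-ineqq} and the embedding $D(\Avec^{\sigma})\hookrightarrow\Lvec^\infty(\Om)$ associated with Theorem~\ref{embb} (recall $\sigma>N/4$), one has $\|\zvec\|_\infty\le C\|\zvec\|_{L^\infty(D(\Avec^{\sigma}))}\le C\|\tilde\yvec\|_{L^\infty(D(\Avec^{\sigma}))}$, which turns the last display into the announced estimate.

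The only genuinely delicate point — and the reason the statement is restricted to $\sigma'<1$ — is the integrability of the singular kernel $(t-s)^{-\sigma'}$ produced by \eqref{t-r}: its exponent must remain strictly below $1$. This forces one to estimate the convective term in $\Hvec$, letting the semigroup supply the full gain $\Avec^{\sigma'}$, rather than trading a spatial derivative for extra time-decay, which would raise the exponent to $\sigma'+\tfrac12>1$ and make the integral diverge. Beyond this bookkeeping I expect no real obstacle, since the exponential dependence on $\|\zvec\|_\infty$ is inherited for free from \eqref{yy-ineqq}.
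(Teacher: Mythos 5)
Your proposal is correct and follows essentially the same route as the paper's proof: rewriting \eqref{oseen} as an abstract evolution equation, applying $\Avec^{\sigma'}$ to the variation-of-constants formula, invoking the smoothing estimate \eqref{t-r} with the integrable kernel $(t-s)^{-\sigma'}$, bounding the convective term by $\|\zvec\|_\infty\|\yvec\|_{L^\infty(\Vvec)}$, and then substituting \eqref{yy-ineqq} and \eqref{zalpha-ineqq} together with the embedding $D(\Avec^{\sigma})\hookrightarrow \Lvec^\infty(\Om)$. Your additional remarks (absorbing the prefactor via $s\,e^{Cs^2}\leq C'e^{C's^2}$, and the explanation of why the convective term must be estimated in $\Hvec$ rather than trading a derivative for time decay) only make explicit what the paper leaves implicit.
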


\begin{proof}
	In view of \eqref{oseen}, $\yvec$ solves the following abstract initial value problem:
\[
	\left\{
		\begin{array}{lll}
			\yvec_t = - \Avec\yvec - \Pvec((\zvec\cdot \nabla)\yvec) + \Pvec(\vvec1_\omega)                      & \hbox{in}&       [0,T], \\
			\yvec(0) = \yvec_0.
		\end{array}
	\right.
\]
	This system can be rewritten as the nonlinear integral equation
\[
	\yvec(t)=e^{-t\Avec}\yvec_0-\int_0^t e^{-(t-s)\Avec}\Pvec((\zvec\cdot\nabla)\yvec)(s)~ds+\int_0^te^{-(t-s)\Avec}\Pvec(\vvec1_\omega)(s)\,ds.
\]
\vspace{-0.1cm}
	Consequently, applying the  operator $\Avec^{\sigma'}$ to both sides, we have
\[
	\Avec^{\sigma'}\yvec(t)=\Avec^{\sigma'}e^{-t\Avec}\yvec_0+\int_0^t \Avec^{\sigma'}e^{-(t-s)\Avec}\left[-P((\zvec\cdot\nabla)\yvec)(s)+P(\vvec1_\omega)(s)\right]\,ds.
\]
\vspace{-0.2cm}
	Taking norms in both sides and using Theorem \ref{estimatesemigroup}, we see that
\vspace{-0.1cm}
\[
	\begin{alignedat}{2}
		\|\Avec^{\sigma'}\yvec\|(t)  &\leq \|\yvec_0\|_{D(A^{\sigma'})}+\int_0^t(t-s)^{-\sigma'}
		\left[\|\zvec(s)\|_\infty \|\nabla\yvec(s)\|+\|\vvec(s)1_\omega\| \right]\,ds\\
						         &\leq  C\|\yvec_0\|_{D(\Avec)}+(\|\zvec\|_\infty \|\yvec\|_{L^\infty(\Vvec)}+\|\vvec\|_{L^\infty(\Lvec^2(\om))})\int_0^t(t-s)^{-\sigma'}\,ds.
	\end{alignedat}
\]
	Now, using \eqref{zalpha-ineqq} and \eqref{yy-ineqq} and taking into account that $\sigma'<1$, we easily obtain that
\[
		\|\Avec^{\sigma'}\yvec\|(t)\leq  C(\|\yvec_0\|_{D(\Avec)}+\|\vvec\|_{L^\infty(\Lvec^2(\om))})
		\left[1+\|\tilde{\mathbf{y}}\|_{L^\infty(D(\Avec^{\sigma}))}	e^{C\|\tilde{\mathbf{y}}\|^2_{L^\infty(D(\Avec^{\sigma}))}}\right].
\]
	This ends the proof.
\end{proof}

	Now, let us set
\[
	\Wvec = \{\, \wvec\in L^\infty(0,T; D(\Avec^{\sigma'})) : \wvec_t\in L^2(0,T;\Hvec) \,\}
\]
	and let us consider the closed ball
	$$
	\Kvec=\{\, \tilde \yvec\in L^\infty(0, T;D(\Avec^{\sigma})) : \|\tilde{\mathbf{y}}\|_{L^\infty(D(\Avec^{\sigma}))}\leq 1\,\}
	$$
	and the mapping $\tilde\Lambda_\alpha$, with $\tilde\Lambda_\alpha(\tilde \yvec)=\yvec$ for all $\tilde \yvec \in  L^\infty(0, T;D(\Avec^{\sigma}))$.
	Obviously, $\tilde\Lambda_\alpha$ is well defined; furthermore, in view of Lemma~\ref{Regularity} and \eqref{yy-ineqq}, it maps the whole space
	$ L^\infty(0, T;D(\Avec^{\sigma}))$ into $\Wvec$.

	Notice that, if $\mathbf{U}$ is bounded set of $\Wvec$ then it is relatively compact in the space $ L^\infty(0, T;D(\Avec^{\sigma}))$, in view of the
	classical results of the Aubin-Lions kind, see for instance~\cite{Simon}.

	Let us denote by $\Lambda_\alpha$ the restriction to~$\Kvec$ of~$\tilde\Lambda_\alpha$.
	Then, thanks to Lemma~\ref{Regularity} and \eqref{contr}, if $\|\yvec_0\|_{D(\Avec)}\leq \varepsilon$ (independent of $\alpha$!) $\Lambda_\alpha$ maps $\Kvec$ into itself.
	Moreover, it is clear that $\Lambda_\alpha: \Kvec \mapsto \Kvec$ satisfies the hypotheses of Schauder's Theorem.
	Indeed, this nonlinear mapping is continuous and compact
	(the latter is a consequence of the fact that, if $\mathbf{B}$ is bounded in $ L^\infty(0, T;D(\Avec^{\sigma}))$, then $\tilde\Lambda_\alpha(\mathbf{B})$
	is bounded in $\Wvec$). Consequently, $\Lambda_\alpha$ possesses at least one fixed point in $\Kvec$, and this ends the proof of Theorem \ref{NC-LERAY}.
	\Fin

\

\noindent
{\bf Proof of Theorem~\ref{CONVERGENCE}:}
 	Let $\vvec_\alpha$ be a null control for \eqref{CLalpha} satisfying \eqref{v-unif} and let $(\yvec_\alpha,p_\alpha,\zvec_\alpha,\pi_\alpha)$ be the state associated to $\vvec_\alpha$.
	From \eqref{v-unif} and the estimates \eqref{yalpha-ineqq} for the solutions $\yvec_\alpha$, there exist $\vvec\in  L^\infty\left(0, T;\Lvec^2(\om)\right)$ and $\yvec\in L^\infty(0,T;\Hvec)\cap L^2(0,T;\Vvec)$
	with $\mathbf{y}_t\in L^{\sigma_N}(0,T;\Vvec')$ such that, at least for a subsequence
\[
	\begin{alignedat}{2}
			\noalign{\smallskip}   \mathbf{v}_\alpha      &\rightarrow \mathbf{v}~\hbox{weakly-$\star$ in  } L^\infty\left(0, T;\Lvec^2(\om)\right), \\
			\noalign{\smallskip}   \mathbf{y}_\alpha      &\rightarrow \mathbf{y}~\hbox{weakly-$\star$ in } L^\infty\left(0, T;\Hvec\right) \hbox{ and weakly in } L^2\left(0, T;\Vvec\right), \\
			\noalign{\smallskip}	(\mathbf{y}_\alpha)_t &\rightarrow \mathbf{y}_t~\hbox{weakly in } L^{\sigma_N}(0,T;\Vvec').
	\end{alignedat}
\]

	Since $\Wvec:=\{\mvec\in L^2\left(0, T;\Vvec\right):~ \mvec_t\in L^{\sigma_N}(0,T;\Vvec')\}$ is continuously
	and compactly embedded in $\Lvec^2(Q)$, we have that
\[
	\mathbf{y}_\alpha \to \mathbf{y} \hbox{ in }\Lvec^2(Q) \hbox{ and a.e.}
\]
	This is sufficient to pass to the limit in the equations satisfied by $\yvec_\alpha$, $\vvec_\alpha$ and $\zvec_\alpha$. We conclude that
	$\yvec$ is, together with some pressure $p$, a solution to the Navier-Stokes equations associated to a control $\vvec$ and satisfies
	\eqref{null_condition}.
	\Fin


\section{The boundary case: Theorems \ref{NC-LERAY-BOUNDARY} and \ref{CONVERGENCE-BOUNDARY}}\label{Sec4}

	This section is devoted to prove the local boundary null controllability of~\eqref{BBCLalpha} and the uniform controllability property
	in~Theorem~\ref{CONVERGENCE-BOUNDARY}.
	\\
	\\
\noindent
{\bf Proof of Theorem~\ref{NC-LERAY-BOUNDARY}:}
	Again, we will use a fixed point argument. Contrarily to the case of distributed controllability, we will have to work in
	a space $\mathbf{\tilde Y}$ of functions defined in an extended domain.
	
	Let $\widetilde \Om$ be given, with $\Om\subset\widetilde \Om$ and $\partial\widetilde \Om \cap \Gamma = \Gamma \setminus \gamma$ such that $\partial\widetilde \Om$ is of class~$C^2$ (see~Fig.~\ref{fig1}).
	Let $\om\subset \widetilde \Om\setminus\overline\Om$ be a non-empty open subset and let us introduce $\widetilde Q:= \tilde \Om\times(0,T)$ and $\widetilde \Sigma:=  \partial\widetilde \Om\times(0,T)$.
	The spaces and operators associate to the domain $\widetilde \Om$ will be denoted by $\widetilde \Hvec$, $\widetilde \Vvec$, $\widetilde\Avec$, etc. 

\begin{rmq}\label{reg-data} \rm
	In view of Lemma~\ref{regularity}, for the proof of Theorem~\ref{NC-LERAY-BOUNDARY} we just need to consider the case in which the initial
	state $\yvec_0$ belongs to $\Vvec$ and possesses a sufficiently small norm in $\Vvec$.
	Indeed, we only have to take initially $\hvec_\alpha\equiv \ovec$ and apply Lemma~\ref{regularity} to the solution to~\eqref{BBCLalpha}. \Fin
\end{rmq}	

	Let $\yvec_0 \in \Vvec$ be given and let us introduce the extension by zero 
	$\tilde \yvec_0$ of~$\yvec_0$. Then $\tilde \yvec_0\in \widetilde \Vvec$.

\begin{figure}[h]
\centering\includegraphics[scale=0.45]{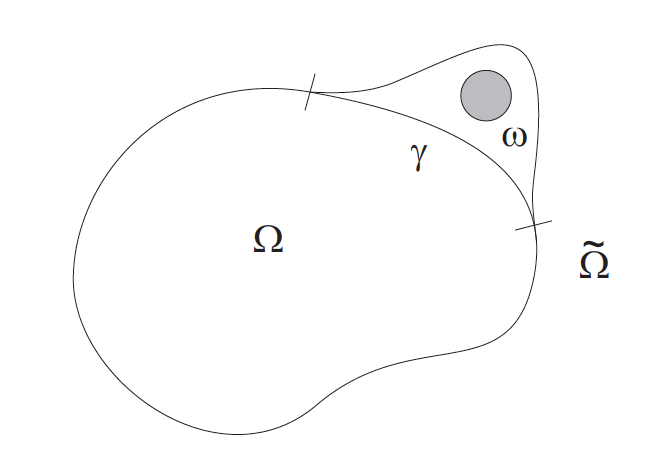}
\caption{The domain $\widetilde\Om$}
\label{fig1}
\end{figure}

	We will use the following result, similar to Lemma~\ref{regularity}, whose proof is postponed to the end of the section:
\begin{lemma}\label{regularity_BC}
	There exists a continuous function $\phi:\mathbb{R}_{+}\mapsto \mathbb{R}_{+}$
	satisfying $\phi(s)\rightarrow 0$ as $s\rightarrow 0^+$ with the following property:
	
\begin{itemize}

	\item [a)]
	For any $\yvec_0\in \Vvec$ and any $\alpha > 0$, there exist times $T_0 \in (0,T)$, controls~$\hvec_\alpha\in L^2(0,T_0;\Hvec^{1/2}(\Gamma))$ with $\int_{\gamma} \hvec_\alpha\cdot \nvec \,d\Gamma \equiv 0$, associated solutions $(\yvec_\alpha,p_\alpha,\zvec_\alpha,\pi_\alpha)$ to \eqref{BBCLalpha} in~$\Om \times (0,T_0)$ and arbitrarily small times $t^*\in(0,T/2)$ such that the $\yvec_\alpha$ can be extended to $\widetilde \Om \times (0,T_0)$ and the extensions satisfy $\|\tilde\yvec_\alpha(t^*)\|^2_{D(\widetilde\Avec)}\leq \phi(\|\yvec_0\|_{\Vvec})$.
	
	\item [b)] The set of these $t^*$ has positive measure.
	
	\item [c)] The controls $\hvec_\alpha$  are uniformly bounded, i.e.
	\[
		\|\hvec_\alpha\|_{L^\infty(0,T_0;\Hvec^{1/2}(\gamma))}\leq C.
	\]
\end{itemize}
\end{lemma}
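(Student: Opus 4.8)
The plan is to reduce the boundary problem on $\Om$ to the interior regularization problem on the enlarged domain $\widetilde{\Om}$, where Lemma~\ref{regularity} already applies. First I would extend the initial datum by zero to $\tilde\yvec_0\in\widetilde{\Vvec}$ (as set up above) and consider the \emph{uncontrolled} Leray-$\alpha$ system \eqref{Lalpha} posed on $\widetilde{\Om}$, with $\fvec=\ovec$, homogeneous Dirichlet conditions on $\partial\widetilde{\Om}$ and initial state $\tilde\yvec_0$. Proposition~\ref{G-W-E-U-L-alpha}, applied on $\widetilde{\Om}$, furnishes a unique global weak solution $(\tilde\yvec_\alpha,\tilde p_\alpha,\tilde\zvec_\alpha,\tilde\pi_\alpha)$, and I would then run the argument of Lemma~\ref{regularity} verbatim on $\widetilde{\Om}$: this produces a continuous, nondecreasing $\phi$ with $\phi(s)\to0$ as $s\to0^+$ together with a set of arbitrarily small times $t^*$, of positive measure, at which $\|\tilde\yvec_\alpha(t^*)\|^2_{D(\widetilde{\Avec})}\leq\phi(\|\tilde\yvec_0\|_{\widetilde{\Hvec}})$. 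Since $\|\tilde\yvec_0\|_{\widetilde{\Hvec}}=\|\yvec_0\|_{L^2(\Om)}\leq\|\yvec_0\|_{\Vvec}$, monotonicity of $\phi$ gives the bound $\phi(\|\yvec_0\|_{\Vvec})$ required in part (a), and part (b) is inherited directly.

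The boundary control and the state on $\Om$ would then be obtained by restriction. Setting $\yvec_\alpha:=\tilde\yvec_\alpha|_{\Om}$ and $\zvec_\alpha:=\tilde\zvec_\alpha|_{\Om}$ (and likewise for the pressures), the pair solves the equations of \eqref{BBCLalpha} in $\Om\times(0,T_0)$; on $\Gamma\setminus\gamma\subset\partial\widetilde{\Om}$ the traces vanish, so the boundary data are supported on $\gamma$, and I would set $\hvec_\alpha:=\tilde\yvec_\alpha|_{\gamma}$. Note that $\gamma$ is an interior $C^2$ hypersurface of $\widetilde{\Om}$, so the interior trace theorem gives $\hvec_\alpha(t)\in\Hvec^{1/2}(\gamma)$ with $\|\hvec_\alpha(t)\|_{\Hvec^{1/2}(\gamma)}\leq C\|\tilde\yvec_\alpha(t)\|_{\widetilde{\Vvec}}$. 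The zero-flux condition follows from the divergence theorem on $\Om$: since $\yvec_\alpha$ is divergence free and vanishes on $\Gamma\setminus\gamma$, one has $\int_\gamma\hvec_\alpha\cdot\nvec\,d\Gamma=\int_\Gamma\yvec_\alpha\cdot\nvec\,d\Gamma=\int_\Om\nabla\cdot\yvec_\alpha\,d\xvec=0$. One point to be checked here is the compatibility of the $\zvec$-boundary value with the convention $\yvec=\zvec=\hvec\mathbf{1}_\gamma$; this is handled by reading the boundary condition for each variable as its own $\gamma$-supported trace, both of which are controlled below.

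The delicate part is conclusion (c), the uniform-in-$\alpha$ bound on $\hvec_\alpha$. By the trace estimate above it suffices to bound $\|\tilde\yvec_\alpha\|_{L^\infty(0,T_0;\widetilde{\Vvec})}$ independently of $\alpha$, and this is exactly where the $\alpha$-independent strong estimates are needed. In dimension $N=2$ this is immediate from Proposition~\ref{L-S-E-U-L-alpha-2D}, whose bound does not see $\alpha$. In dimension $N=3$ I would choose $T_0\in(0,T)$ small, depending on $\|\yvec_0\|_{\Vvec}$ but not on $\alpha$, so that the smallness condition \eqref{MMM} holds on $(0,T_0)$ for the extended datum (recall $\|\nabla\tilde\yvec_0\|=\|\nabla\yvec_0\|$ for the zero extension); concretely, any $T_0<\min\{T,\,[2(1+C_0)\|\nabla\yvec_0\|^4]^{-1}\}$ works. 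Proposition~\ref{L-S-E-U-L-alpha-3D} then yields $\|\tilde\yvec_\alpha\|^2_{C^0([0,T_0];\widetilde{\Vvec})}\leq B_3$ with $B_3$ independent of $\alpha$, which gives (c). Finally I would shrink the construction parameter $\tau$ in the regularization step so that the positive-measure set of good times $t^*$ lies inside $(0,T_0)$, which is possible because that step produces arbitrarily small $t^*$.

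The main obstacle is precisely this last point: controlling the boundary trace uniformly in $\alpha$ forces one to work on a time window $(0,T_0)$ short enough that the $\alpha$-independent local strong theory of Proposition~\ref{L-S-E-U-L-alpha-3D} is available, rather than the $\alpha$-dependent global estimate of Proposition~\ref{G-S-E-U-L-alpha}. Once $T_0$ is fixed in this way, everything else — the extension, the restriction, the trace and flux identities — is routine, and the inherited function $\phi$ together with the positive-measure conclusion transfers without change from $\widetilde{\Om}$ to $\Om$.
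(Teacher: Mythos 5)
There is a genuine gap, and it sits exactly at the point you flag and then wave away. Your plan is to solve the \emph{full} Leray-$\alpha$ system on the extended domain $\widetilde\Om$ (both the momentum equation and the filter equation, with $\tilde\zvec=\ovec$ on $\partial\widetilde\Om$) and then restrict to $\Om$. The interior equations do restrict correctly, but the boundary condition of \eqref{BBCLalpha} does not: that system requires $\yvec=\zvec=\hvec 1_\gamma$ on $\Sigma$, i.e.\ a \emph{single} control acting simultaneously on both variables (this is precisely the content of Remark~1). In your construction the trace of the restricted filter, $\tilde\zvec_\alpha|_{\gamma}$, is determined by solving the filter problem on all of $\widetilde\Om$ and has no reason whatsoever to coincide with $\tilde\yvec_\alpha|_{\gamma}$; so the restricted quadruple is a solution of a \emph{different} boundary-value problem, not of \eqref{BBCLalpha}, and there is no admissible $\hvec_\alpha$. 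Your suggested fix --- ``reading the boundary condition for each variable as its own $\gamma$-supported trace'' --- changes the controlled system, and the lemma (which feeds directly into the proof of Theorem~\ref{NC-LERAY-BOUNDARY}) explicitly demands solutions of \eqref{BBCLalpha} itself.

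The paper's proof is built precisely to avoid this mismatch: it never filters on $\widetilde\Om$. Given $\overline\yvec$ on the extended domain, it solves the corrector problem
\begin{equation*}
\wvec-\alpha^2\Delta\wvec+\nabla\pi=\alpha^2\Delta\overline\yvec \ \hbox{ in }\Om\times(0,T_0),\qquad \nabla\cdot\wvec=0,\qquad \wvec=\ovec \ \hbox{ on }\Gamma\times(0,T_0),
\end{equation*}
and sets $\tilde\zvec:=\overline\yvec+\tilde\wvec$ (zero extension of $\wvec$). Inside $\Om$ this makes $\zvec:=\yvec+\wvec$ satisfy the filter equation $\zvec-\alpha^2\Delta\zvec+\nabla\pi=\yvec$, while $\wvec=\ovec$ on $\Gamma$ forces $\zvec=\yvec$ on $\Gamma$ \emph{automatically}, so the common trace $\hvec_\alpha:=\tilde\yvec_\alpha|_{\gamma\times(0,T_0)}$ is a legitimate single control. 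A Schauder fixed point for $\overline\yvec\mapsto\tilde\yvec$ in $L^4(0,T_0;\widetilde\Vvec)$ (compactness via $L^2(0,T_0;D(\widetilde\Avec))\cap L^\infty(0,T_0;\widetilde\Vvec)\hookrightarrow L^4(0,T_0;D(\widetilde\Avec^{3/4}))$) then yields the extended solution, with the uniform-in-$\alpha$ bound on the corrector coming from Lemma~\ref{interpolation}, and only afterwards is the argument of Lemma~\ref{regularity} rerun in this framework. The parts of your proposal that are sound --- the zero-flux identity via the divergence theorem, the trace estimate $\|\hvec_\alpha(t)\|_{\Hvec^{1/2}(\gamma)}\leq C\|\tilde\yvec_\alpha(t)\|_{\widetilde\Vvec}$, and the insight that $T_0$ must be taken small (depending on $\|\yvec_0\|_{\Vvec}$ but not $\alpha$) so that local-in-time strong estimates in the spirit of Proposition~\ref{L-S-E-U-L-alpha-3D} give (c) uniformly in $\alpha$ --- all survive, but only after the extended system is replaced by the corrector-based one; note also that with the corrected system the transport field is $\tilde\zvec=\overline\yvec+\tilde\wvec$ rather than a Leray-$\alpha$ filtered velocity on $\widetilde\Om$, so even the regularization step cannot be quoted ``verbatim'' from Lemma~\ref{regularity} and must be adapted, as the paper indicates.
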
	

	In view of Lemma~\ref{regularity_BC}, for the proof of Theorem~\ref{NC-LERAY-BOUNDARY}, we just need to consider the case in which
	the initial state $\yvec_0$ is such that its extension $\tilde \yvec_0$ to $\tilde \Om$ belongs to $D(\tilde\Avec)$ and possesses a
	sufficiently small norm in $D(\tilde\Avec)$.

	We will prove that there exists
	$(\tilde\yvec_\alpha,\tilde p_\alpha,\zvec_\alpha,\pi_\alpha,\tilde\vvec)$, with $\tilde \vvec \in L^\infty(0,T;\Lvec^2(\om))$, satisfying
\begin{equation}\label{final-bound}
		\left\{
			\begin{array}{lll}
				\tilde\yvec_t  -  \Delta \tilde\yvec +(\tilde\zvec \cdot \nabla) \tilde\yvec+ \nabla\tilde p = \tilde\vvec1_{\om}
				& \hbox{in} &     \widetilde Q,  \\
				\zvec-\alpha^2\Delta \zvec +\nabla \pi=\tilde\yvec                                            				
				& \hbox{in} &       Q,  \\
				\nabla \cdot \tilde\yvec=0         		                                        									
				& \hbox{in} &       \widetilde Q, \\
				\nabla \cdot \zvec=0            		                                        									
				& \hbox{in} &        Q, \\
				\tilde\yvec = \ovec                                                                               							
				& \hbox{on}&       \widetilde\Sigma, \\
				\zvec =\tilde \yvec                                                                              								
				& \hbox{on}&       \Sigma, \\
				\tilde\yvec(0) = \tilde\yvec_0                                                                                       				
				& \hbox{in} &       \widetilde \Om
			\end{array}
		\right.
\end{equation}
	and~$\tilde \yvec(T) =\ovec$ in~$\widetilde \Om$, where $\tilde\zvec$ is the extension by zero of $\zvec$. Obviously, if this were the case, the restriction $(\yvec,p)$ of~$(\tilde\yvec,\tilde p)$ to $Q$, the couple $(\zvec,\pi)$ and the lateral trace $\hvec:= \tilde \yvec|_{\gamma\times(0,T)}$ would satisfy~\eqref{BBCLalpha} and~\eqref{null_condition}.

	Let us fix $\sigma$ with $N/4<\sigma<\beta<1$.
	Then, for each $\overline\yvec\in L^\infty(0, T;D(\tilde\Avec^{\sigma}))$, let $\wvec = \wvec(\xvec,t)$ and $\pi=\pi(\xvec,t)$ be the unique solution to
\[
	\left\{
		\begin{array}{lll}
			\wvec-\alpha^2\Delta \wvec +\nabla \pi=\alpha^2\Delta\overline\yvec			& \hbox{in} &       Q,    \\
			\nabla \cdot \wvec= 0                          	                		   			 	& \hbox{in} &       Q,     \\
			\wvec=\ovec  			                          			    				& \hbox{on}&  \Sigma.
		\end{array}
	\right.
\]

	Since $\overline\yvec\in L^\infty(0, T;D(\tilde\Avec^{\sigma}))$, its restriction to $Q$ belongs to $L^\infty(0, T;\Hvec^{2\sigma}(\Om))$.
	Then, Lemma \ref{interpolation} implies $\wvec\in L^\infty(0, T;D(\Avec^{\sigma}))$ and, thanks to Theorem \ref{embb}, we also have
	$\wvec\in \Lvec^\infty(Q)$ and
\[
	\begin{alignedat}{2}
		\|\wvec\|_{L^\infty(0, T;D(\Avec^{\sigma}))}^2 \leq&~C\|\overline\yvec\|_{L^\infty(0, T;D(\tilde\Avec^{\sigma}))}^2,
	\end{alignedat}
\]
	where $C$ is independent of $\alpha$.
	
	Let $\tilde\wvec$ be the extension by zero of $\wvec$  and let us set $\tilde \zvec: = \overline \yvec+ \tilde\wvec$. Let us consider the system
	\eqref{oseen} with $\hvec$ replaced by $\tilde\zvec$ and $\Om$ replaced by $\widetilde \Om$. In view of Theorem \ref{NC-OSEENN}, we can
	associate to $\tilde\zvec$ the null control $\tilde\vvec$ of minimal norm in	$L^\infty(0,T;\Lvec^2(\tilde\omega))$ and the corresponding solution
	$(\tilde\yvec,\tilde p)$ to \eqref{oseen}. Since $\tilde\yvec_0\in D(\tilde\Avec)$, $\tilde\zvec\in \Lvec^\infty(\widetilde Q)$ and
	$\tilde\vvec\in L^{\infty}(0,T;\Lvec^2(\tilde\omega))$, we have
\[
	\begin{array}{c}\dis
		\tilde\yvec \in L^2(0, T;D(\tilde\Avec)) \cap C^0([0, T];\tilde\Vvec),~\dis \tilde\yvec_t \in L^2(0,T;\tilde\Hvec)
	\end{array}
\]
	and the following estimate holds:
\begin{equation}\label{tilyy-ineqq}
	\|\tilde \yvec_t\|^2_{L^2(\tilde\Hvec)}+\|\tilde\yvec\|^2_{L^2(D(\tilde\Avec)) }+\|\tilde\yvec\|^2_{L^\infty(\tilde\Vvec)}
	\leq C(\|\tilde\yvec_0\|^2_{\tilde\Vvec}+\|\tilde\vvec\|^2_{L^\infty(\Lvec^{2}(\tilde\om))})e^{C\|\tilde\zvec\|^2_{\infty}}.
\end{equation}

	Also, in account of Lemma \ref{Regularity}, one has $\tilde\yvec\in L^\infty(0, T;D(\tilde\Avec^{\beta}))$ and
\[
\begin{alignedat}{2}
	\|\tilde\yvec\|_{L^\infty(D(\tilde\Avec^{\beta}))} \leq  C(\|\tilde\yvec_0\|_{D(\tilde\Avec)}+\|\tilde\vvec\|_{L^\infty(\Lvec^2(\tilde\om))})e^{C\|\overline{\mathbf{y}}\|_{L^\infty(0, T;D(\tilde\Avec^{\sigma}))}}).
\end{alignedat}
\]

	Now, let us set
\[
	\Wvec = \{\, \mvec\in L^\infty(0,T; D(\tilde\Avec^{\beta})) : \mvec_t\in L^2(0,T;\tilde\Hvec) \,\},
\]
	and let us consider the closed ball
	$$
	\Kvec=\{\, \overline \yvec\in L^\infty(0, T;D(\tilde\Avec^{\sigma})) : \|\overline{\mathbf{y}}\|_{L^\infty(D(\tilde\Avec^{\sigma}))}\leq 1\,\}
	$$
	and the mapping $\tilde\Lambda_\alpha$, with $\tilde\Lambda_\alpha(\overline\yvec)=\tilde\yvec$ for all $\overline\yvec \in
	L^\infty(0, T;D(\tilde\Avec^{\sigma}))$.
	Obviously, $\tilde\Lambda_\alpha$ is well defined and maps the whole space
	$ L^\infty(0, T;D(\tilde\Avec^{\sigma}))$ into $\Wvec$. Furthermore, any bounded set $\Uvec\subset\Wvec$ then it is relatively compact
	in $L^\infty(0, T;D(\tilde\Avec^{\sigma}))$.
	
	Let us denote by $\Lambda_\alpha$ the restriction to~$\Kvec$ of~$\tilde\Lambda_\alpha$.
	Thanks to Lemma~\ref{Regularity} and \eqref{contr}, there exists $\eps>0$ (independent of $\alpha$) such that if $\|\tilde\yvec_0\|_{D(\tilde\Avec)}
	\leq \varepsilon$, $\Lambda_\alpha$ maps $\Kvec$ into itself and it is clear that $\Lambda_\alpha: \Kvec \mapsto \Kvec$ satisfies the hypotheses of
	Schauder's Theorem. Consequently, $\Lambda_\alpha$ possesses at least one fixed point in $\Kvec$ and \eqref{final-bound} possesses a
	solution.This ends the proof of Theorem \ref{NC-LERAY-BOUNDARY}.
\Fin
	\\
	\\
\begin{proof}[Proof of Theorem~\ref{CONVERGENCE-BOUNDARY}]
	The proof is easy, in view of the previous uniform estimates. It suffices to adapt the argument in the proof of Theorem \ref{CONVERGENCE}
	and deduce the existence of subsequences that converge (in an appropriate sense) to a solution to \eqref{BBC-NS} satisfying \eqref{null_condition}. For brevity, we omit the details.
\end{proof}
	\\
	\\
\begin{proof}[Proof of Lemma~\ref{regularity_BC}]
	For instance, let us only consider the case $N=3$.
	We will reduce the proof to the search of a fixed point of another mapping $\Phi_\alpha$.

	For any~$\yvec_0 \in \Vvec$, any~$T_0 \in (0,T)$ and~any $\overline\yvec\in L^4(0,T_0;\widetilde\Vvec))$, let $(\wvec,\pi)$ be the unique solution to
\[
	\left\{
		\begin{array}{lll}
			\wvec-\alpha^2\Delta \wvec +\nabla \pi=\alpha^2\Delta\overline\yvec			& \hbox{in} &  \Om \times (0,T_0),    \\
			\nabla \cdot \wvec= 0                          	                		   			 	& \hbox{in} &  \Om \times (0,T_0),     \\
			\wvec=\ovec  			                          			    				& \hbox{on}&  \Gamma \times (0,T_0),
		\end{array}
	\right.
\]
	let $\tilde\wvec$ be the extension by zero of $\wvec$, let us set $\tilde \zvec: = \overline \yvec+ \tilde\wvec$ and let us introduce the Oseen system
\[
	\left\{
		\begin{array}{lll}
			\tilde \yvec_t  - \Delta \tilde \yvec +(\tilde \zvec \cdot \nabla) \tilde \yvec+ \nabla \tilde p = \ovec  & \hbox{in}&  \widetilde\Om\times (0,T_0),     \\
			\nabla\cdot\tilde \yvec=0                                      						   	  		& \hbox{in}& \widetilde\Om\times (0,T_0),   \\
			\tilde \yvec  = \ovec         											  	 		&\hbox{on}& \partial\widetilde\Om\times (0,T_0),  \\
			\tilde \yvec(0)=\tilde \yvec_0                                          					         	& \hbox{in}&      \widetilde\Om.
		\end{array}
	\right.
\]

	It is clear that the restriction of $\overline{\yvec}$ to $\Om\times(0,T_0)$ belongs to $L^4(0, T_0;\Hvec^{1}(\Om))$, whence we have from~Lemma~\ref{interpolation} that $\wvec\in L^4(0, T_0;\Vvec)$ and
\[
	\|\wvec\|_{L^4(0, T_0;\Vvec)} \leq C\|\overline\yvec\|_{L^4(0, T_0;\widetilde\Vvec)}.
\]

	It is also clear that we can get estimates like those in the proof of Proposition~\ref{L-S-E-U-L-alpha-3D} for~$\widetilde\yvec$.
	In other words, for any $\yvec_0 \in \Vvec$, we can find a sufficiently small $T_0 > 0$ such that
\[
		\widetilde\yvec \in L^2(0, T_0;D(\widetilde\Avec)) \cap C^0([0, T_0];\widetilde\Vvec), \quad
		\widetilde\yvec_t\in L^2(0,T_0;\widetilde\Hvec)
\]
	and
\[
		\|\widetilde\yvec\|_{L^2(0, T_0;D(\widetilde\Avec))} + \|\widetilde\yvec\|_{C^0([0, T_0];\widetilde\Vvec)}
		+ \|\widetilde\yvec_t\|_{L^2(0,T_0;\widetilde\Hvec)}
		\leq C\left(T_0,\|\yvec_0\|_\Vvec,\|\overline\yvec\|_{L^4(0, T_0;\widetilde\Vvec)}\right),
\]
	where $C$ is nondecreasing with respect to all arguments and goes to zero as~$\|\yvec_0\|_\Vvec \to 0$.

	Now, let us introduce the mapping $\Phi_\alpha: L^4(0,T_0;\widetilde\Vvec) \mapsto L^4(0,T_0;\widetilde\Vvec)$, with $\Phi_\alpha(\overline \yvec)=\widetilde\yvec$ for all $\overline \yvec \in L^4(0,T;\widetilde\Vvec)$.
	This is a continuous and compact mapping.
	Indeed, from well known interpolation results, we have that the embedding
\[
	L^2(0, T_0;D(\widetilde\Avec)) \cap L^\infty(0, T_0;\widetilde\Vvec) \hookrightarrow L^4(0, T_0;D(\widetilde\Avec^{3/4}))
\]
	is continuous and this shows that, if $\widetilde\yvec$ is bounded in $L^2(0, T_0;D(\widetilde\Avec)) \cap C^0([0, T_0];\widetilde\Vvec)$ and~$\widetilde\yvec_t$ is bounded in $L^2(0,T_0;\widetilde\Hvec)$, then $\widetilde\yvec$ belongs to a compact set of~$L^4(0,T_0;\widetilde\Vvec)$.

	Then, as in the proofs of Theorems~\ref{NC-LERAY} and~\ref{NC-LERAY-BOUNDARY}, we immediately deduce that, whenever $\|\yvec_0\|_\Vvec \leq \delta$
	(for some $\delta$ independent of $\alpha$), $\Phi_\alpha$ possesses at least one fixed point.
	This shows that the nonlinear system \eqref{final-bound} is solvable for $\tilde \vvec\equiv0$ and $\|\yvec_0\|_\Vvec \leq \delta$.

	Now, the argument in the proof of Lemma \ref{regularity} can be applied in this framework and, as a consequence, we easily deduce Lemma~\ref{regularity_BC}.
\end{proof}


\section{Additional comments and questions}\label{Sec5}


\subsection{Controllability problems for semi-Galerkin approximations}

	Let $\{\,\wvec^1, \wvec^2, \dots \,\}$ be a basis of the Hilbert space $\Vvec$.
	For instance, we can consider the orthogonal base formed by the eigenvectors of the Stokes operator $\Avec$.
	Together with \eqref{CLalpha}, we can consider the following semi-Galerkin approximated problems:
\begin{equation}\label{CLalpha-m}
	\left\{
		\begin{array}{lll}
			\yvec_t  -  \Delta \yvec +(\zvec^m \cdot \nabla) \yvec+ \nabla p = \vvec1_\om  											& \hbox{in} \ Q,    	 \\
			(\zvec^m(t),\wvec) \!+\! \alpha^2(\nabla\zvec^m(t),\wvec) = (\yvec(t),\wvec) \ \forall \wvec \in \Vvec_m, \ \zvec^m(t) \in \Vvec_m, \   & t \in (0,T), 		 \\
			\nabla \cdot \yvec=0,                                                  															&\hbox{in} \ Q,    	 \\
			\yvec = \ovec                                                                         	            												 &\hbox{on} \ \Sigma, 	 \\
			\yvec(0) = \yvec_0                                                                                       											 &\hbox{in} \ \Om,
		\end{array}
	\right.
\end{equation}
	where $\Vvec_m$ denotes the space spanned by $\wvec^1, \dots, \wvec^m$.

	Arguing as in the proof of~Theorem~\ref{NC-LERAY}, it is possible to prove a local null controllability result for~\eqref{CLalpha-m}.
	More precisely, for each $m\geq 1$, there exists $\eps_m > 0$ such that, if $\|\yvec_0\| \leq \eps_m$, we can find controls $\vvec^m$
	and associated states $(\yvec^m,p^m,\zvec^m)$ satisfying \eqref {null_condition}.
	Notice that, in view of the equivalence of norms in~$\Vvec_m$, the fixed point argument can be applied in this case without any extra
	regularity assumption on~$\yvec_0$; in other words, Lemma~\ref{regularity} is not needed here.
	
	On the other hand, it can also be checked that the maximal $\eps_m$ are bounded from below by some positive quantity independent of~$m$
	and~$\alpha$ and the controls $\vvec^m$ can be found uniformly bounded in~$L^\infty(0,T;\Lvec^2(\omega))$.
	As a consequence, at least for a subsequence, the controls converge weakly-$*$ in that space to a null control for~\eqref{CLalpha}.
	
	However, it is unknown whether the problems \eqref{CLalpha-m} are~{\it globally}~null-controllable;
	see below for other considerations concerning global controllability.


\subsection{Another strategy: applying an inverse function theorem}

	There is another way to prove the local null controllability of~\eqref{CLalpha} that relies on {\it Liusternik's Inverse Function Theorem,} see for instance~\cite{Alekseev}.
	This strategy has been introduced in~\cite{FURS-IMANU} and has been applied successfully to the controllability of many semilinear and nonlinear PDE's.
	In the framework of~\eqref{CLalpha}, the argument is as follows:
	
\begin{enumerate}

\item
	Introduce an appropriate Hilbert space $\Yvec$ of {\it state-control pairs} $(\yvec_\alpha,p_\alpha,\zvec_\alpha,\pi_\alpha,\vvec_\alpha)$ satisfying~\eqref{null_condition}.

\item
	Introduce a second Hilbert space $\Zvec$ of right hand sides and initial data and a well-defined mapping $\Fvec : \Yvec \mapsto \Zvec$ such that the null
	controllability of~\eqref{CLalpha} with state-controls in~$\Yvec$ is equivalent to the solution of the nonlinear equation
\begin{equation}\label{eqn-1}
	\Fvec(\yvec_\alpha,p_\alpha,\zvec_\alpha,\pi_\alpha,\vvec_\alpha) = ({\bf 0},\yvec_0), \quad (\yvec_\alpha,p_\alpha,\zvec_\alpha,\pi_\alpha,\vvec_\alpha)  \in \Yvec.
\end{equation}
	
\item
	Prove that $\Fvec$ is $C^1$ in a neighborhood of $({\bf 0},0,{\bf 0},0,{\bf 0})$ and $\Fvec'({\bf 0},0,{\bf 0},0,{\bf 0})$ is onto.
\end{enumerate}

	Arguing as in~\cite{FC-G-P}, all this can be accomplished satisfactorily.
	As a result, \eqref{eqn-1} can be solved for small initial data $\yvec_0$ and the local null controllability of~\eqref{CLalpha} holds.


\subsection{On global controllability properties}

	It is unknown whether a general global null controllability result holds for \eqref{CLalpha}.
	This is not surprising, since the same question is also open for the Navier-Stokes system.

	What  can be proved (as well as for the Navier-Stokes system) is the null controllability for large time:
	for any given $\yvec_0 \in \Hvec$, there exists $T_* = T_*(\|\yvec_0\|)$
	such that \eqref{CLalpha} can be driven exactly to zero with controls $\vvec_\alpha$ uniformly bounded in~$L^\infty(0,T_*;\Lvec^2(\omega))$.
	
	Indeed, let $\eps$ be the constant furnished by Theorem~\ref{NC-LERAY} corresponding to the time $T = 1$ (for instance).
	Let us first take $\vvec_\alpha \equiv \mathbf{0}$.
	Then, since the solution to~\eqref{Lalpha} with $\fvec = \mathbf{0}$ satisfies $\|\yvec_\alpha(t)\| \searrow 0$, there exists $T_0$
	(depending on~$\|\yvec_0\|$ but not on $\alpha$) such that $\|\yvec_\alpha(T_0)\| \leq \eps$.
	Therefore, there exist controls $\vvec'_\alpha \in L^\infty(T_0,T_0+1;\Lvec^2(\omega))$ such that the solution to~\eqref{CLalpha}
	that starts from $\yvec_\alpha(T_0)$ at time $T_0$ satisfies $\yvec_\alpha(T_0+1) = \mathbf{0}$.
	Hence, the assertion is fulfilled with $T_* = T_0 + 1$ and
\[
	\vvec_\alpha =
	\left\{
        		\begin{array}{ll}
			\dis
                		\ovec 		& \text{for \quad $0 \leq t < T_0$,}       \\
			\dis
                        \vvec'_\alpha    & \text{for \quad $T_0 \leq t \leq T_*$.}
		\end{array}
	\right.
\]
	
	A similar argument leads to the null controllability of~\eqref{CLalpha} for large~$\alpha$.
	In other words, it is also true that, for any given $\yvec_0 \in \Hvec$ and~$T > 0$, there exists $\alpha_* = \alpha_*(\|\yvec_0\|,T)$ such that,
	if $\alpha \geq \alpha_*$, then \eqref{CLalpha} can be driven exactly to zero at time~$T$.

\subsection{The Burgers-$\alpha$ system}

	There exist similar results for a regularized version of the Burgers equation, more precisely the Burgers-$\alpha$ system
\begin{equation}\label{B-alpha}
	\left\{
		\begin{array}{lll}
			y_t - y_{xx} + z y_x = v1_{(a,b)}                   		 	   & \hbox{in}& (0,L)\times(0,T), \\
			z - \alpha^2 z_{xx} = y                                			   & \hbox{in}& (0,L)\times(0,T), \\
			y(0,t) = y(L,t) = z(0,t) = z(L,t) = 0  		   & \hbox{on}& (0,T), \\
			y(x,0) = y_0(x)                                      				   & \hbox{in}& (0,L).
		\end{array}
	\right.
\end{equation}

	These have been proved in~\cite{FA-EFC-DAS}.
	
	This system can be viewed as a toy or preliminary model of~\eqref{CLalpha}.
	There are, however, several important differences between~\eqref{CLalpha} and~\eqref{B-alpha}:
	
\begin{itemize}

\item The solution to~\eqref{B-alpha} satisfies a maximum principle that provides a useful $L^\infty$-estimate.

\item There is no apparent energy decay for the uncontrolled solutions.
	As a consequence, the large time null controllability of~\eqref{B-alpha} is unknown.

\item It is known that, in the limit $\alpha = 0$, i.e.~for the Burgers equation, global null controllability does not hold;
	consequently, in general, the null controllability of~\eqref{B-alpha} with controls bounded independently of $\alpha$ is impossible.

\end{itemize}

	We refer to~\cite{FA-EFC-DAS} for further details.


\subsection{Local exact controllability to the trajectories}

	It makes sense to consider not only null controllability but also {\it exact to the trajectories} controllability problems for~\eqref{CLalpha}.
	More precisely, let $\hat\yvec_0 \in \Hvec$ be given and let $(\hat\yvec,\hat p,\hat\zvec,\hat\pi)$ a sufficiently regular solution to~\eqref{Lalpha} for
	$\fvec \equiv \mathbf{0}$ and $\yvec_0 = \hat\yvec_0$.
	Then the question is whether, for any given $\yvec_0 \in \Hvec$, there exist controls $\vvec$ such that the associated states, i.e.~the associated solutions to~\eqref{CLalpha}, satisfy
\[
	\yvec(T) = \hat\yvec(T)~\hbox{in}~\Om.
\]
	
	The change of variables
	\[
\yvec = \hat\yvec + \uvec, \ \zvec = \hat\zvec + \wvec,
	\]
	allows to rewrite this problem as the null controllability of a system similar, but not identical, to~\eqref{CLalpha}.
	It is thus reasonable to expect that a local result holds.


\subsection{Controlling with few scalar controls}

	The local null controllability with $N-1$ or even less scalar controls is also an interesting question.
	
	In view of the achievements in~\cite{C-G} and~\cite{Coron-Lissy} for the Navier-Stokes equations, it is reasonable to expect that results similar
	to~Theorems~\ref{NC-LERAY} and~\ref{CONVERGENCE} hold with controls $\vvec$ such that $v_i \equiv 0$ for some $i$;
	under some geometrical restrictions, it is also expectable that local exact controllability to the trajectories holds with controls of the same kind,
	see~\cite{FC-G-P-2}.
	

\subsection{Other related  controllability problems}

	There are many other interesting questions concerning the controllability of~\eqref{CLalpha} and related systems.
	
	For instance, we can consider questions like those above for the Leray-$\alpha$ equations completed with other boundary conditions:
	Navier, Fourier or periodic conditions for $\yvec$ and $\zvec$, conditions of different kinds on different parts of the boundary, etc.
	We can also consider Boussinesq-$\alpha$ systems, i.e. systems of the form
\[
	\left\{
		\begin{array}{lll}
		\yvec_t  -  \Delta \yvec + (\zvec \cdot \nabla) \yvec+ \nabla p = \theta \kvec + \vvec1_\om  	& \hbox{in} &       Q,    \\
		\theta_t  -  \Delta \theta + \zvec \cdot \nabla \theta = \wvec1_\om  	& \hbox{in} &       Q,     \\
		\zvec-\alpha^2\Delta \zvec +\nabla \pi=\yvec                                               	& \hbox{in} &       Q,     \\
		\nabla \cdot \yvec=0,~\nabla \cdot \zvec= 0                                                 	& \hbox{in} &       Q,     \\
		\yvec = \zvec=\ovec, \ \theta = 0                                                                          & \hbox{on}&  \Sigma, \\
		\yvec(0) = \yvec_0, \ \theta(0) = \theta_0                                                             & \hbox{in} &  \Om.
		\end{array}
	\right.
\]

	Some of these results will be analyzed in a forthcoming paper.\\ \\
\textbf{{ Acknowledgements}}\,\,\,
The authors thank J. L. Boldrini for the constructive conversations on the mathematical model.



\end{document}